\documentclass{article}

\usepackage{arxiv}

\usepackage{cite}
\usepackage[utf8]{inputenc} 
\usepackage[T1]{fontenc}    
\usepackage{url}            
\usepackage{booktabs}       
\usepackage{amsfonts}       
\usepackage{nicefrac}       
\usepackage{microtype}      
\usepackage{mathtools}
\usepackage{amsmath,amssymb,amsfonts}
\usepackage{graphicx}
\usepackage{textcomp}
\usepackage{xcolor}
\usepackage{hyperref}
\usepackage{amsbsy}
\usepackage{amsthm}
\usepackage{enumitem}
\usepackage{bbm}
\usepackage{algorithmic}
\usepackage[]{algorithm}
\usepackage{subcaption}
\usepackage{soul}
\usepackage{lettrine}
\usepackage[symbol]{footmisc}
\usepackage{mathtools}
\usepackage{caption}
\captionsetup[table]{skip=10pt plus 0.01pt}

\newcommand{\abs}[1]{\ensuremath{\left\lvert#1\right\rvert}}
\renewcommand{\b}[1]{\ensuremath{\mathbf{#1}}} 
\renewcommand{\c}[1]{\ensuremath{\mathcal{#1}}} 
\newcommand{\ind}{1\hspace{-1.6mm}1} 
\DeclareMathOperator{\prox}{\mathbf{prox}}

\newcommand{\norm}[1]{\ensuremath{\left\|#1\right\|}} 
\newcommand{\tb}[1]{\ensuremath{\tilde{\mathbf{#1}}}} 
\newcommand{\eqtext}[1]{\ensuremath{\stackrel{\text{#1}}{=}}} 
\newcommand{\leqtext}[1]{\ensuremath{\stackrel{\text{#1}}{\leq}}} 
\newcommand{\geqtext}[1]{\ensuremath{\stackrel{\text{#1}}{\geq}}} 
\DeclareMathOperator*{\argmin}{arg\,min} 
\providecommand{\ip}[2]{\langle #1, #2 \rangle} 
\newcommand{\px}[2]{\ensuremath{\mathcal{P}_{#1}\left(#2\right)}} 

\def \sn {\sum_{t=1}\sp{N}}
\def \Reg {{\mathrm{Reg}}}

\def \x {{\boldsymbol{x}}}
\def \v {{\boldsymbol{v}}}
\def \w {{\boldsymbol{w}}}
\def \y {{\boldsymbol{y}}}
\def \u {{\boldsymbol{u}}}
\def \z {{\boldsymbol{z}}}
\def \nn {\nonumber}

\def \tx {{\tb{\boldsymbol{x}}}}


\def \Jt {{\tilde{J}}}
\def \pn {{\mathcal{P}_N}}


\def \I {{\b{I}}}

\def \T {{\mathsf{T}}}

\def \O {{\c{O}}}

\def \cA {{\c{A}}}

\def \cX {{\c{X}}}
\def \bX {{\boldsymbol{\mathcal{X}}}}

\def \cN {{\c{N}}}




\def \Rn {{\mathbb{R}}}

\renewcommand{\^}[1]{\ensuremath{\sp{(#1)}}}		


\newtheorem{assumption}{}

\theoremstyle{remark}
\newtheorem{rem}{\bf Remark}
\newtheorem{theorem}{Theorem}
\newtheorem{lemma}{Lemma}

\newtheorem{corollary}[theorem]{Corollary}


\hypersetup{
    colorlinks=true,
    linkcolor=blue,
    citecolor=blue,
    urlcolor=black
}

\begin{document}

\title{Proximal Algorithms for Smoothed Online Convex Optimization with Predictions} 

\author{
	Spandan Senapati\(^*\) \\
        Indian Institute of Technology Kanpur \\
	\And
	Ashwin Shenai\(^\dagger\) \\
        Indian Institute of Technology Kanpur \\
	\\
	\And
	Ketan Rajawat\(^\ddagger\)  \\
	Indian Institute of Technology Kanpur \\
}

\maketitle
\footnotetext{

* Currently with the Department of Computer Science, University of Southern California, Los Angeles, CA, USA (email: \texttt{ssenapat@usc.edu}).

\(\dagger\) Currently with the Department of Computer Science (D-INFK), ETH Zurich, Switzerland (email: \texttt{ashenai@ethz.ch}).

\(\ddagger\) Department of Electrical Engineering (email: \texttt{ketan@iitk.ac.in})

This work was done while the authors were at the Indian Institute of Technology Kanpur.
}

\begin{abstract}
	We consider a smoothed online convex optimization (SOCO) problem with predictions, where the learner has access to a finite lookahead window of time-varying stage costs, but suffers a {switching cost} for changing its actions at each stage. Based on the Alternating Proximal Gradient Descent (APGD) framework, we develop Receding Horizon Alternating Proximal Descent (RHAPD) for proximable, non-smooth and strongly convex stage costs, and RHAPD-Smooth (RHAPD-S) for non-proximable, smooth and strongly convex stage costs. In addition to outperforming gradient descent-based algorithms, while maintaining a comparable runtime complexity, our proposed algorithms also allow us to solve a wider range of problems. We provide theoretical upper bounds on the dynamic regret achieved by the proposed algorithms, which decay exponentially with the length of the lookahead window. The performance of the presented algorithms is empirically demonstrated via numerical experiments on non-smooth regression, dynamic trajectory tracking, and {economic power dispatch} problems.
\end{abstract}
 

\section{Introduction}
Most sequential decision-making problems in control, robotics, and machine learning fall under an abstract unifying framework: the system needs to make a decision at every time step, after which it pays a cost or gains a reward, and the goal is to minimize the cumulative cost incurred over all time-steps. Mathematically, such problems can be formulated within the online convex optimization (OCO) framework, where we seek to minimize $\sn f_t(x_t)$,  where $x_t \in \Rn^d$ is the action taken at time $t$ and $f_t(x_t)$ is the cost incurred. In many applications, the trajectory $\{x_t\}_{t=1}^N$ of the system is generally required to be \emph{smooth} with {minimal} changes in $x_t$ over consecutive time steps. For instance, in the context of autonomous navigation, the trajectory of a vehicle must be free of jerks, and likewise, in video streaming, rapidly fluctuating bit rates result in poor quality of experience for the user. The smoothed OCO (SOCO) framework allows for smooth trajectories by seeking to minimize $\sn f_t(x_t) + g(x_t,x_{t-1})$, where the switching cost $g$ is employed to discourage large changes in $x_t$. SOCO problems arise in a number of areas, such as data center management \cite{lin2012dynamic, lin2012online}, autonomous navigation \cite{rios2016survey}, network resource allocation \cite{chen2017online}, smart grid control \cite{badiei2015online, kim2016online}, thermal management of systems-on-chip \cite{zanini2010}, and portfolio management \cite{hazan2007logarithmic, mcmahan2012no}.

Classical SOCO approaches adopt a model, wherein the stage cost $f_t$ is revealed, based on which an action \(x_{t}\) is taken. In real-world systems, however, $f_t$ depends on the environment and may be predictable to a certain extent. Indeed, predictions of $f_t$ for the near future may be available from an independent analysis of the past interactions of the system with its environment and may be quite accurate. For instance, in autonomous navigation tasks, the environmental states may be predictable over a finite horizon. In the context of smart grid control, the electricity prices for the next day may be exactly known from the day-ahead auction results. Hence, in this work, we depart from the classical SOCO setting by assuming that $\{f_t, \ldots, f_{t+W-1}\}$ are revealed at time $t$. We call this problem \emph{SOCO with predictions}. The SOCO with predictions problem can generally be tackled within the Model Predictive Control (MPC) framework, where a $W$-sized optimization problem {is formulated and solved} at each $t$. {Though empirically efficient, solving this optimization problem at each iteration can be computationally expensive. } 

An online algorithm for solving the SOCO (with predictions) problem was proposed in \cite{lin2012dynamic} in the context of data center sizing, and was shown to attain a cost that was at most 3 times the minimum attainable cost. Subsequent works such as \cite{chen2015online, chen2016using}  considered the more general case of noisy predictions, but still required solving an optimization problem at each $t$ and hence {would not be} applicable to large-scale settings. More recently, the computational bottleneck of \cite{lin2012dynamic, chen2015online,chen2016using} has been partly resolved by \cite{li2020online}, {which puts forth} first-order approaches to solve the desired problem for the quadratic switching cost. {Namely,} the {Receding Horizon Gradient Descent (RHGD) algorithm and its accelerated variant (RHAG)} proposed in \cite{li2020online} are applicable to large-scale settings and come with tight dynamic regret bounds. {However, since} these algorithms can be viewed as variants of the offline gradient descent algorithm, they require the stage costs to be smooth. It is remarked that non-differentiable cost functions are not uncommon, e.g., in multi-task lasso regression and non-smooth barrier functions in autonomous driving. 

\begin{table*}[t]
\centering
\begin{tabular}{|c|c|c|c|}
\hline
Algorithm (\(\mathcal{A}\)) & \(f_{t}\) & \(g(x_{t}, x_{t - 1})\) & \(\mathrm{Reg(\mathcal{A})}\) \\ \hline
RHGD \cite{li2020online, li2018using} & smooth + strongly convex & quadratic & \(\mathcal{O}\big(c_{1} ^ {W}(\sum_{t = 1}^{N} \norm{\theta_t - \theta_{t - 1}})\big),\) \(c_{1} \in (0, 1)\) \\ \hline
RHAG \cite{li2020online, li2018using} & smooth + strongly convex & quadratic & \(\mathcal{O}\big(c_{2} ^ {W}(\sum_{t = 1}^{N} \norm{\theta_t - \theta_{t - 1}})\big),\) \(c_{2} \in (0, 1)\) \\ \hline
RHAPD (This work) & strongly convex & assumption \ref{g_convex} & \(\mathcal{O}\big(c_{3} ^ {W}(\sum_{t = 1}^{N} \norm{\theta_t - \theta_{t - 1}})\big),\) \(c_{3} \in (0, 1)\) \\ \hline
RHAPD-S (This work) & smooth + strongly convex & quadratic & \(\mathcal{O}\big(c_{4} ^ {W}(\sum_{t = 1}^{N} \norm{\theta_t - \theta_{t - 1}})\big),\) \(c_{4} \in (0, 1)\) \\ \hline
\end{tabular}
\caption{{Comparison of the proposed algorithms with the existing algorithms for the considered problem (SOCO with predictions). The dynamic regret $\mathrm{Reg(\mathcal{A})}$ is defined in  \eqref{regret}. It is already known from 
\cite{li2020online, li2018using} that $c_2 < c_1$. Remark \ref{bound_comparsion} compares $c_4$ with $c_1$ and $c_2$.}}
\label{tab:comparison_prior_works}
\end{table*}

\subsection{Contributions}
This work builds upon \cite{li2020online} and puts forth low-complexity receding horizon proximal descent-based algorithms for smooth {as well as} non-smooth stage costs. For non-smooth but proximable stage costs, we propose Receding Horizon Alternating Proximal Descent (RHAPD). Different from existing works, {we provide a dynamic regret bound for RHAPD that holds for any} generic convex, and smooth switching cost. For non-proximable but smooth stage costs, we propose {RHAPD-Smooth} (RHAPD-S) and similarly bound its dynamic regret for quadratic switching costs. As in \cite{li2020online}, the developed bounds are proportional to the path length and exponentially decaying in the size of the lookahead window $W$. A key novelty in both the proposed algorithms is the use of the most recently updated iterates while calculating the updates. As a result, the proposed algorithms are no longer  variants of the RHGD algorithm from \cite{li2020online} and instead incorporate a flavor of \emph{alternating} descent methods. Indeed, we show that the classical alternating minimization or block-coordinate descent algorithms are special cases of RHAPD. These modifications are shown to yield significant performance gains over both RHGD {as well as} RHAG, even though the proposed algorithms are not accelerated. We demonstrate the superiority of the proposed algorithms through various numerical experiments in {economic power dispatch,} non-smooth regression, and trajectory tracking.

\section{Related Work}
In this section, we discuss some related work in the areas of online {convex} optimization, and alternating minimization. 

\subsection{Dynamic Regret} OCO problems have been widely studied for online learning, where the focus has been on characterizing their static regret, which is defined as the difference between the cumulative cost incurred by an online algorithm and that of the best-fixed decision in hindsight \cite{hazan2007logarithmic}. In dynamic and non-stationary environments, however, the optimal decision may also be time-varying, motivating the need for other more general metrics. The \emph{dynamic regret} compares the cumulative cost incurred by the algorithm against that achieved by a dynamic sequence of comparators $\{u_1, \ldots, u_N\}$ \cite{zinkevich2003online}. {{Dynamic regret is generally not sublinear but is bounded by terms that depend on the variability of the comparators, such as the path length} $\pn:= \sum_{t=2}^N \norm{\theta_t - \theta_{t-1}}$, $ \theta_t:= \argmin f_{t}(x) $ among others \cite{mokhtari2016online}.} {The competitive ratio, defined as the ratio of the cumulative cost attained by the online algorithm and that of the optimal offline algorithm is another common metric used to evaluate the performance of online algorithms}  \cite{lin2012dynamic, lin2012online}. 

\subsection{{Smoothed Online Convex Optimization}}
{The SOCO problem was first introduced in \cite{lin2012dynamic} for dynamic data center scheduling, {which proposed an online algorithm with a competitive ratio of 3 for the SOCO problem with the following switching cost: $g(x_{t}, x_{t - 1}) = \max (x_t - x_{t - 1}, 0)$}. {For \(d = 1\), \cite{bansal2015} proposed a 2-competitive randomized algorithm for the SOCO problem with the linear switching cost \(g(x_{t}, x_{t - 1}) = \abs{x_{t} - x_{t - 1}}\), matching the lower bound of \(2\) \cite{antoniadis2017tight}}. {In general, however, the competitive ratio for the SOCO problem with non-negative convex stage costs and linear switching cost \(g(x_{t}, x_{t - 1}) = \norm{x_{t} - x_{t - 1}}\) is lower bounded by $\Omega(\sqrt{d})$ \cite{chen2018}, and that for the quadratic switching cost \(g(x_{t}, x_{t - 1}) = \frac{1}{2}\norm{x_{t} - x_{t - 1}} ^ 2\) is unbounded \cite{goel2019beyond}, i.e., there exists a sequence of convex functions such that the competitive ratio of any online algorithm for solving the SOCO problem with the quadratic switching cost is \(\infty\). The \(\Omega(\sqrt{d})\) lower bound for the linear switching cost problem suggests that there cannot exist an algorithm for solving the SOCO problem, that achieves a dimension-free constant competitive ratio for general convex costs.} Notwithstanding the negative result, the Online Balanced Descent (OBD) algorithm proposed in \cite{chen2018} achieves a competitive ratio of $3 + \O(\frac{1}{\alpha})$ for $\alpha$-locally polyhedral stage costs {with the linear switching cost} and  $3 + \O(\frac{1}{\mu})$ for $\mu$-strongly convex stage costs \cite{goel2019online} {with the quadratic switching cost}. The greedy and regularized variants of OBD have subsequently been proposed in \cite{goel2019beyond}, achieving {order-wise (with respect to \(\mu\)) optimal competitive ratios for the SOCO problem with the quadratic switching cost}. {More recently, the constrained online balanced descent (COBD) algorithm, proposed by \cite{argue2020dimension} attains a competitive ratio \(\mathcal{O}(\big(\frac{L}{\mu}\big) ^ {\frac{1}{2}})\) for the SOCO problem with \(L\)-smooth and \(\mu\)-strongly convex costs and linear switching cost.}}

{From \cite{daniely2019competitive}, it is known} that for convex stage costs, no algorithm can simultaneously be constantly competitive and have sublinear regret. {{Contrarily, in} the strongly convex stage cost setting, the regularized OBD algorithm \cite{goel2019beyond} {attains} a constant competitive ratio and has a sublinear regret when considering quadratic switching costs}.  Finally, for SOCO with predictions, a lower bound on the dynamic regret was proposed in \cite{li2020online} for smooth, strongly convex stage costs and quadratic {switching cost}. Interestingly, the dynamic regret of the RHAG algorithm proposed in \cite{li2020online} {matches the lower bound order-wise with respect to the path length \(\mathcal{P}_{N}\)}. The dynamic regret bound of the RHAPD-S algorithm proposed in the present work is better than RHGD \cite{li2020online} but worse than RHAG \cite{li2020online}, {however}, the empirical performance of RHAPD-S is comparable to RHAG, while {RHAPD is better than RHAG} for proximable and smooth stage costs.

\subsection{Alternating Proximal Gradient Descent (APGD)} The APGD algorithm has been extensively studied for solving problems of the form $\min_{x_i} \sum_i F_i(x_i) + H(x_1, \ldots, x_N)$, especially for the non-convex setting \cite{nikolova2019alternating, xu2013block}. The key idea of APGD is to sequentially update the coordinates \(x_{1}, \ldots, x_{N}\) using a proximal step. {Unlike} proximal gradient methods, APGD evaluates each update using the most recently updated values of all other coordinates. The special case of two-component minimization $\min_{x, y} F(x) + G(y) + H(x,y)$ was studied in \cite{attouch2010proximal,bolte2014proximal}, while the general problem was studied in \cite{nikolova2019alternating, xu2013block}, under the Kurdyka-Lojaseiwicz (KL) assumption on the objective function \cite{lojasiewicz1963propriete,li2018calculus,bolte2010characterizations}. While the proposed RHAPD and RHAPD-S algorithms also use the most recent updates at each iteration (hence the name \emph{alternating}), our focus is on developing online algorithms and characterizing their dynamic regret. The subgradient bounds developed in the present work build upon similar results developed in \cite{attouch2013convergence,bolte2017error}, {which study subgradient sequences of functions satisfying the KL property}.

Finally, we remark that the SOCO problem can be modeled as a regularized block multi-convex optimization problem \cite{nikolova2019alternating}. Proximal variants of the block-coordinate descent (BCD) algorithm have been studied in \cite{bolte2014proximal,xu2013block}, and works such as \cite{attouch2013convergence,beck2016alternating,hesse2015,pock2016inertial} have examined these algorithms under more relaxed assumptions. 

\textbf{Notation}: We use both bold and regular font letters to denote vectors, with the bold-faced letters denoting super-vectors that collect multiple vectors. For example, we have $x_t \in \Rn^{d}$, whereas the super-vector $\x\in \Rn^{dN}$ collects $\{x_t\}_{t=1}^{N}$. The symbols \(\mathbf{0}, \mathbf{1}, \I\) represent the vector of all zeroes, all ones, and the identity matrix respectively, with the sizes being inferred from the context. The $\ell_2$ norm  of $x$ is denoted by $\norm{x}$ and \(\ip{u}{v}\) denotes the inner product of \(u\) and \(v\), defined as \(\ip{u}{v} := u ^ {\T}v\).  The gradient of the function $f(x)$ is denoted by $\nabla f(x)$, while the gradients of a bivariate function $g(x,y)$ with respect to its first and second arguments are denoted by $\nabla_1g(x,y)$ and $\nabla_2g(x,y)$, respectively. When the function is non-differentiable, we use $\partial$ in place of $\nabla$ to denote the corresponding sub-gradient. $\mathcal{P}_{\mathcal{X}}(.)$ denotes projection onto the set $\mathcal{X}$, i.e., \(\mathcal{P}_{\mathcal{X}}(x) := \argmin_{y \in \mathcal{X}} \norm{x - y}\). Finally, \(\ind_{\mathcal{X}}\) denotes the indicator function, defined as \(\ind_{\mathcal{X}}(x) = 0\) when \(x \in \mathcal{X}\) and \(\infty\) otherwise.

\section{Problem Formulation}\label{problem_setting}
We consider the following optimization problem with time-varying stage costs $f_t$ and a switching cost function $g$:
\begin{align}\label{eq:problem}
	\min_{\{x_t \in \cX\}_{t=1}^{N}} J(\x) :=  \sum_{t = 1}^{N} f_t(x_t) + g(x_t, x_{t - 1}),\tag{$\mathcal{P}$}
\end{align}
where $\cX \subseteq \Rn^d$ is a non-empty, closed, and convex set, $x_0 \in \cX$ is given, and $\x \in \Rn^{dN}$ collects the optimization variables $\{x_t\}_{t=1}^N$. For the sake of brevity, we denote $\bX:=\times_{t=1}^{N} \cX$ so that $\x \in \bX$. The stage costs are non-negative functions, for all $1\leq t \leq N$ and the switching cost is non-negative, i.e., $g(x,y) \geq 0$ for all $x,y \in \cX$. {An example formulation for trajectory planning would represent $x_t$ as the position of a robot at time $t$, the stage costs $f_t$ to model the cost of being away from the target position, and an appropriate switching cost to discourage sudden changes in {the} position or velocity of the robot.} The current work will allow for generic switching costs, though a particular case of the switching cost $g(x_t,x_{t-1}) = \frac{\gamma}{2}\norm{x_t-x_{t-1}}^2$ for $\gamma > 0$ will be considered in Section \ref{sec-prox}\footnote{In Section \ref{sec-prox}, we propose and analyze RHAPD for general switching costs, {and additionally}, show that the bounds can be significantly tightened for the quadratic switching cost. However, in Section \ref{smoothOCO} we propose and analyze RHAPD-S {just} for the quadratic switching cost.} and Section \ref{smoothOCO}. Typically, the feasible set $\cX$ depends on the intrinsic physical limitations of the system, such as the maximum speed, and is hence known in advance. Likewise, the form of the switching cost function $g$ is decided at the formulation stage and is again assumed known. In general, however, the stage costs $f_t$ may depend on extrinsic factors such as the environment or the target position, and are not known a priori, e.g., if the environment is dynamic and/or the goal is evasive. 

We consider the finite horizon look-ahead setting from \cite{li2020online}, wherein at time $t$, the agent receives stage costs $f_t$, $f_{t+1}$, $\ldots$, $f_{t+W-1}$ for the next $W$ stages and subsequently takes an action $x_t$. The agent suffers the stage cost $f_t(x_t)$ as well as the switching cost $g(x_t, x_{t-1})$, and seeks to minimize $J(\x)$ in \eqref{eq:problem}. {In contrast to the classical OCO framework (where $f_t(x_t)$ is revealed only after $x_t$ is chosen), this framework reveals the entire functional form of $f_t$, $f_{t+1}$, $\ldots$, $f_{t+W-1}$, before $x_t$ is chosen\footnote{Due to the sequential nature of the problem, the agent has full information about \(f_{1}, \dots, f_{t + W - 1}\) at time \(t\), based on which it takes the action \(x_{t}\).}}. {Such a setting is applicable to} real-world control problems where both environmental and target dynamics may be predictable, albeit over a small window of duration $W$. 

The performance of an algorithm $\cA$ is measured by its dynamic regret -- the difference between the objective value achieved by $\cA$ and the optimal value of \eqref{eq:problem}, given by
\begin{align}
	\Reg(\cA) = J(\x^\cA) - J(\x^\star),\label{regret}
\end{align}
where $\x^\cA$ collects the actions taken by $\cA$ and {$J(\x^\star)$} denotes the optimal objective value of \eqref{eq:problem}. In general, dynamic regret may not necessarily be sublinear and instead expressed in terms of a regularity measure that captures the inherent variability of the environment. One such commonly used regularity measure is the path length, which captures the variations in the stage-cost minimizers and is given by 
\begin{align}
\pn = \sum_{t=2}^N \norm{\theta_t-\theta_{t-1}},	\label{path}
\end{align}
with $\theta_t = \argmin_{x \in \cX} {f_t(x)}$ for $1 \leq t \leq N$.

It is worth mentioning that \eqref{eq:problem} {can also be expressed as}
\begin{align}
	\min_{\x} \Jt(\x) &:= \sum_{t = 1}^{N} (f_t(x_t) + g(x_t, x_{t - 1})) + \sum_{t = 1}^{N} \ind_{\cX}(x_t), \nonumber \\ &= J(\x) +  \sum_{t = 1}^{N} \ind_{\cX}(x_t). \label{jtdef}
\end{align}
For the sake of brevity, we will denote 
\begin{align}\label{FHdef}
	F(\x) &= \sn f_t(x_t), \quad H(\x) =  \sum_{t = 1}^{N} g(x_t, x_{t - 1}),
\end{align}
so that the objective can be written as $	J(\x) =  F(\x) + H(\x)$.

Before developing the algorithms to solve \eqref{eq:problem} and analyzing their performance, we state the necessary assumptions on the structure of the problem. We begin with the following subgradient boundedness assumption which is common to all the results developed in this work.
\begin{assumption}\label{bounded_subgradient}
	The stage costs $f_t:\Rn^d\rightarrow \Rn ^ +$ are $G$-Lipschitz over \(\cX\) for all $1 \leq t \leq N$, so that $\norm{\partial f_t(x)} \leq G$ for all $x \in \cX$. 
\end{assumption}
Assumption \ref{bounded_subgradient} is utilized in one of the final steps necessary to bound the optimality gap in terms of $\pn$ \eqref{path}, {hence} required to obtain the dynamic regret. 

Section \ref{sec-prox} considers proximable stage costs, and hence relies on the following key assumption: 

\begin{assumption}\label{non_smooth}
	The stage cost functions $f_t:\Rn^d\rightarrow \Rn_{+}$ are $\mu_t$-strongly convex over $\cX$ for all $1\leq t \leq N$. Further, there exists a positive constant $\mu \leq \min_{1\leq t \leq N}\mu_t$ that does not depend on \(N\). 
\end{assumption}

{Assumption \ref{non_smooth} implies {the following} quadratic lower bound on $f_t$:
\begin{align}\nn
	f_t(y) \geq f_t(x) + \ip{\partial f_t(x)}{y - x} + \frac{\mu_t}{2}\norm{y - x}^{2},
\end{align}
for all $x$, $y \in \cX$. }

The results in Section \ref{sec-prox}{,} developed for general convex switching costs, {require the following assumption}:
\begin{assumption}\label{g_convex}
	The switching cost function $g:\Rn^d \times \Rn^d \rightarrow \Rn_{+}$ is convex and $l_g$-smooth over \(\cX\), and satisfies
	\begin{align}
		0 \leq g(x,y) \leq \frac{\gamma}{2}\norm{x-y}^2, \label{gbound}
	\end{align}
 {where $\gamma > 0$ is a constant.} 
\end{assumption}
Assumption \ref{g_convex} implies that
\begin{align}\nn
	g(x, y) \geq g(u, v) & + \ip{\nabla_1g(u,v)}{x-u} + \ip{\nabla_2 g(u,v)}{y-v},
\end{align}
for all $x$, $y$, $u$, $v \in \cX$.
Further, the $l_g$-smoothness of $g$  implies
\begin{align}
	\norm{\begin{bmatrix} \nabla_1 g(u, v) \\ \nabla_2 g(u, v)\end{bmatrix} - \begin{bmatrix} \nabla_1 g(x, y) \\ \nabla_2 g(x, y)\end{bmatrix}} &\leq l_{g} \norm{\begin{bmatrix} u \\ v \end{bmatrix} - \begin{bmatrix} x \\ y\end{bmatrix}}, \label{g_smooth}
\end{align}
for all \(x, y, u, v \in \mathcal{X}\). Note that \eqref{g_smooth} implies the following partial smoothness conditions: \begin{align}
	\norm{\nabla_1g(u,y)-\nabla_1g(x,y)}&\leq l_g\norm{u-x}, \label{g_smoothx}\\
	\norm{\nabla_{{2}}g(x,v)-\nabla_{{2}}g(x,y)}&\leq l_g\norm{v-y}, \label{g_smoothy}
\end{align}
for all $x$, $y$, $u$, $v \in \cX$. 

Section \ref{smoothOCO} considers non-proximable stage costs and quadratic switching costs, and requires the following assumption:
\begin{assumption}\label{smooth}
	The stage cost functions $f_t:\Rn^d\rightarrow \Rn_{+}$ are $\mu_t$-strongly convex and $l_t$-smooth over $\cX$ for all $1\leq t \leq N$. Further, there exist positive constants $\mu \leq \min_{1\leq t \leq N}\mu_t$ and $l \geq \max_{1 \leq t \leq N} l_t$ that do not depend on $N$. 
\end{assumption}
Assumption \ref{smooth} implies the following quadratic bounds:
\begin{align*}
	\frac{\mu_t}{2}\norm{y - x}^{2} &\leq  f_t(y) - f_t(x) - \ip{\nabla f_t(x)}{y - x} \leq  \frac{l_t}{2}\norm{y - x}^{2},
\end{align*}
which hold for all $x$, $y \in \cX$. 

{Our metric of consideration is dynamic regret, which has been studied in several works on SOCO with strongly convex costs} \cite{li2020online,li2019online,dixit2019online,goel2019beyond,vaze2022dynamic}. In particular, \cite{dixit2019online} {describes} the non-smooth strongly convex setting, {which we consider in Section \ref{sec-prox}}. The Lipschitzness assumption \ref{bounded_subgradient} is a {standard} assumption when the set \(\mathcal{X}\) is constrained \cite{mokhtari2016online, li2019online}. {While developing algorithms that utilize gradient information when \(f_{t}\) is differentiable,} smoothness is a common assumption \cite{li2020online,li2019online}. {Regarding the switching cost function \(g\), \cite{lin2012dynamic,lin2012online} propose competitive algorithms for the SOCO problem with the function} \(g(x, y) = \max(x - y, 0)\) which is convex but not smooth. {Since {the algorithms we devise in this paper} also utilize the gradient information of \(g\), smoothness of \(g\) is again a natural assumption.}

\section{RHAPD for Proximable Stage Costs}\label{sec-prox}

{{This section considers the case of non-smooth strongly convex stage costs. We first discuss the classical proximal gradient descent method and then modify it to yield an improved APGD algorithm. Subsequently, we provide an initialization policy as well as an online implementation of APGD, resulting in the proposed RHAPD algorithm. Finally, we also show that the classical alternating minimization algorithm can be viewed as a special case of the RHAPD algorithm.}}

\begin{algorithm*}[h]
	\caption {\textbf{R}eceding \textbf{H}orizon \textbf{A}lternating \textbf{P}roximal \textbf{D}escent (RHAPD)}
	\begin{algorithmic}[1]
		\STATE\textbf{Input:} $x_{0}, \cX, \gamma > 0, W, N;$
		\STATE \textbf{Initialize} \(x_1 \^ 0 = x_{0};\)
		\STATE \textbf{for} \(t = 2 - W\) \textbf{to} \(N\)
		\STATE \hspace{3mm}\textsc{Step 1: }Initialize \(x_{t + W} \^ 0 \in \cX\) by an initialization policy \(\mathcal{I}\) given as \(x_{t + W} ^ {(0)} = \argmin_{x\in \mathcal{X}} f_{t + W - 1}(x);\)
		\STATE \hspace{3mm}\textsc{Step 2: } \textbf{Update} \(x_{t + W - 1}, x_{t + W - 2}, \ldots, x_t\)
		\STATE \hspace{3mm}\textbf{for} \(i = t + W - 1\) \textbf{down to} \(t\)
			\STATE \hspace{3mm}\hspace{3mm} \(k = t + W - i;\)
			\STATE \hspace{3mm}\hspace{3mm} Update \(x_i\^k\)(using most recent updated values) in the following manner:			\begin{align}\nn
   x_i\^k & = \prox_{\tau_if_i + \ind_{\cX}}(x_i\^{k-1} - \tau_i \nabla H_i\^k(x_i\^{k-1})), \\
				& = \left. \begin{cases} 
                    \prox_{\tau_if_i + \ind_{\cX}} \big(x_i\^{k-1} - \tau_i(\nabla_1 g(x_i\^{k-1}, x_{i - 1}\^k) + \nabla_{2} g (x_{i + 1} \^ {k-1}, x_i\^{k-1}))\big) & \text{if\,\,} 1 \leq i \leq N - 1, \\
                    \prox_{\tau_i f_i + \ind_{\cX}}\big(x_i\^{k-1} - \tau_i\nabla_1g(x_i\^{k-1}, x_{i - 1}\^k)\big) & \text{if\,\,} i = N.
                        \end{cases}\right. \nn
		\end{align}
			\STATE\hspace{3mm}\textbf{end}
		\STATE \textbf{Output: } \(x_t \^ W\) at stage \(t \geq 1;\)
		\STATE\textbf{end}
	\end{algorithmic}
	\label{RHAPD}
\end{algorithm*}

\subsection{Alternating Proximal Gradient Descent (APGD) Algorithm}\label{apgmns}
Towards developing the proposed algorithm, we first consider solving \eqref{eq:problem} using proximal gradient descent (PGD), which is a classical first-order method {with the following update:}
\begin{align}
	\x\^k = \prox_{\tau F + \ind_{\bX}}\left(\x\^{k-1} - \tau \nabla H(\x\^{k-1})\right), \label{offline_proximal}
\end{align}
where $\x\^k$ collects the time-indexed iterates $\{x_t\^k\}_{t=1}^{N}$ and $\tau > 0$ is a suitably chosen step size. {Although superior to the subgradient method in terms of iteration complexity, this method is only practical when the {$\prox$} operation}
\begin{align}	
	\prox_{{\psi}}(y) &:= \arg\min_{x} {\psi(x)} + \frac{1}{2}\norm{x-y}^{2},  \label{proxdef}
\end{align}
 is {easily computable} for a given function $ {\psi} $. {For our problem,}
 \begin{align}
	\x\^k &\eqtext{\eqref{proxdef}} \argmin_{\x \in \bX} F(\x) + \frac{1}{2\tau} \norm{\x  - \x\^{k-1} + \tau \nabla H(\x\^{k-1})}^{2}, \nonumber\\
	&\eqtext{\eqref{FHdef}} \argmin_{\{x_t \in \cX\}_{t=1}^{N}} \sum_{t = 1}^{N} \bigg(f_t(x_t) + \frac{1}{2\tau}\norm{x_t - x_t\^{k-1} + \tau [\nabla H(\x\^{k-1})]_t}^{2}\bigg), \label{separable} 
\end{align}where
\begin{align*}
	[\nabla H(\x\^{k-1})]_t = \left.
	\begin{cases}
		\nabla_1 g(x_t\^{k-1}, x_{t - 1}\^{k-1}) + \nabla_{2} g(x_{t + 1}\^{k-1}, x_t\^{k-1}) & t < N, \\
		\nabla_1 g(x_t\^{k-1}, x_{t - 1}\^{k-1}) & t = N.
	\end{cases}
	\right.
\end{align*}
Since the $t$-th summand in \eqref{separable} depends only on $x_t$, we can express \(x_{t} ^ {(k)}\) as
\begin{align} \label{prox_update}
			    x_t\^k  &= \prox_{\tau f_t + \ind_{\cX}}(x_t\^{k-1} - \tau [\nabla H(\x\^{k-1})]_t), 
\end{align}
for all $1\leq t \leq N$. The performance of PGD has been well-studied, and under \ref{g_convex}, we can similarly write down the accelerated PGD \cite{beck2009fast} {or FISTA, as it is generally known}. 

Observe that the update for \(x_t\^k\) in \eqref{prox_update} depends on $x_t\^{k-1}$, $x_{t - 1}\^{k-1}$, and $x_{t + 1}\^{k-1}$ since the gradient of \(H\) is computed at the previous iterate \(\x\^{k-1}\). However, if the updates for $x_1\^k$, $x_2\^k$, $\ldots$, $x_N\^k$ are carried out sequentially, it can be seen that {although the iterate $x_{t-1}\^k$ is already available at the time of updating $x_t\^k$, \eqref{prox_update} is using its previous value $x_{t-1}\^{k-1}$}. This observation motivates us to consider the \emph{alternating} proximal gradient descent method, {for which the} updates take the form
\begin{align}\label{alt_prox_update} 
	x_t\^k &= \prox_{\tau_tf_t+ \ind_{\cX}}(x_t\^{k-1} - \tau_t \nabla H_t\^k(x_t\^{k-1})), 
\end{align}
where
\begin{align}\label{htk}
	H_t\^k(x_t) := \begin{cases} 
			g(x_t, x_{t - 1}\^k) + g(x_{t + 1}\^{k-1}, x_t) & t < N, \\
		g(x_t, x_{t - 1}\^k) & t = N,  \end{cases}
\end{align}
where $x_0^{(k)} := x_0$ for any \(k \geq 0\). {Notice that} the update in \eqref{alt_prox_update} {uses} a suitably re-defined function $H_t\^k$ which depends on $x_{t-1}\^k$ {and not} $x_{t-1}\^{k-1}$. The resulting $\nabla H_t\^k(x_t\^{k-1})$ in \eqref{htk} {is clearly not the gradient of $H$ at any point}, and hence \eqref{alt_prox_update} is not an instance of the classical proximal gradient method. As shall be shown later, {this} seemingly small change results in a significant performance improvement.

\subsection{Receding Horizon Alternating Proximal Descent Algorithm}
We next detail the online implementation and an initialization policy for the proposed alternating proximal gradient descent algorithm. Observe first that the sequential nature of the updates \eqref{alt_prox_update} imply that at the $k$-th iteration, the blocks of $\x$ evolve as:
\begin{align}
	\x\^{k-1} = (x_1\^{k-1}, \ldots, x_N\^{k-1}) \to \dots  (x_1\^k, \ldots, x_i\^k, x_{i + 1}\^{k-1}, \ldots, x_N\^{k-1}) \to \ldots(x_1\^k, \ldots, x_N\^k) = \x\^k. \label{evolution}
\end{align}
{Figure \ref{oolemma}} depicts the sequence of updates in \eqref{alt_prox_update}. The arrows show dependencies between iterates, since for $1 \leq t \leq N-1$, the update for \(x_t\^k\) requires the availability of $x_{t - 1}\^k$, $x_t\^{k-1}$,  and $x_{t + 1}\^{k-1}$, as well as that of \(f_t\). Likewise, the update for $x_N\^k$ depends on $x_{N - 1}\^k$, $x_N\^{k-1}$, and $f_N$.

\begin{figure*}[t]
	\centering
	\begin{subfigure}[b]{0.48\textwidth}
		\includegraphics[width=\linewidth]{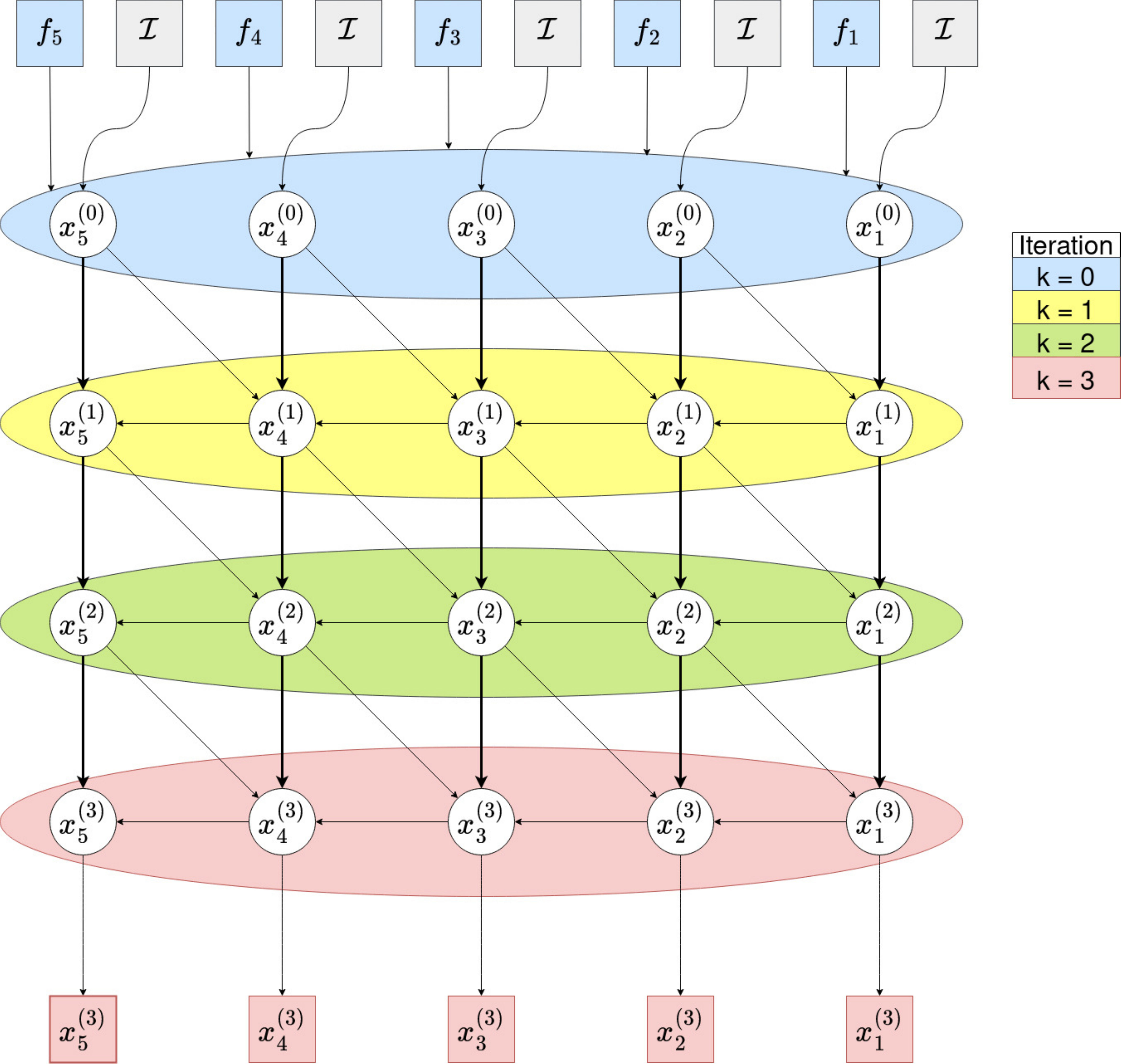}
		\caption{}
	\end{subfigure}
	\begin{subfigure}[b]{0.48\textwidth}
		\includegraphics[width=\linewidth]{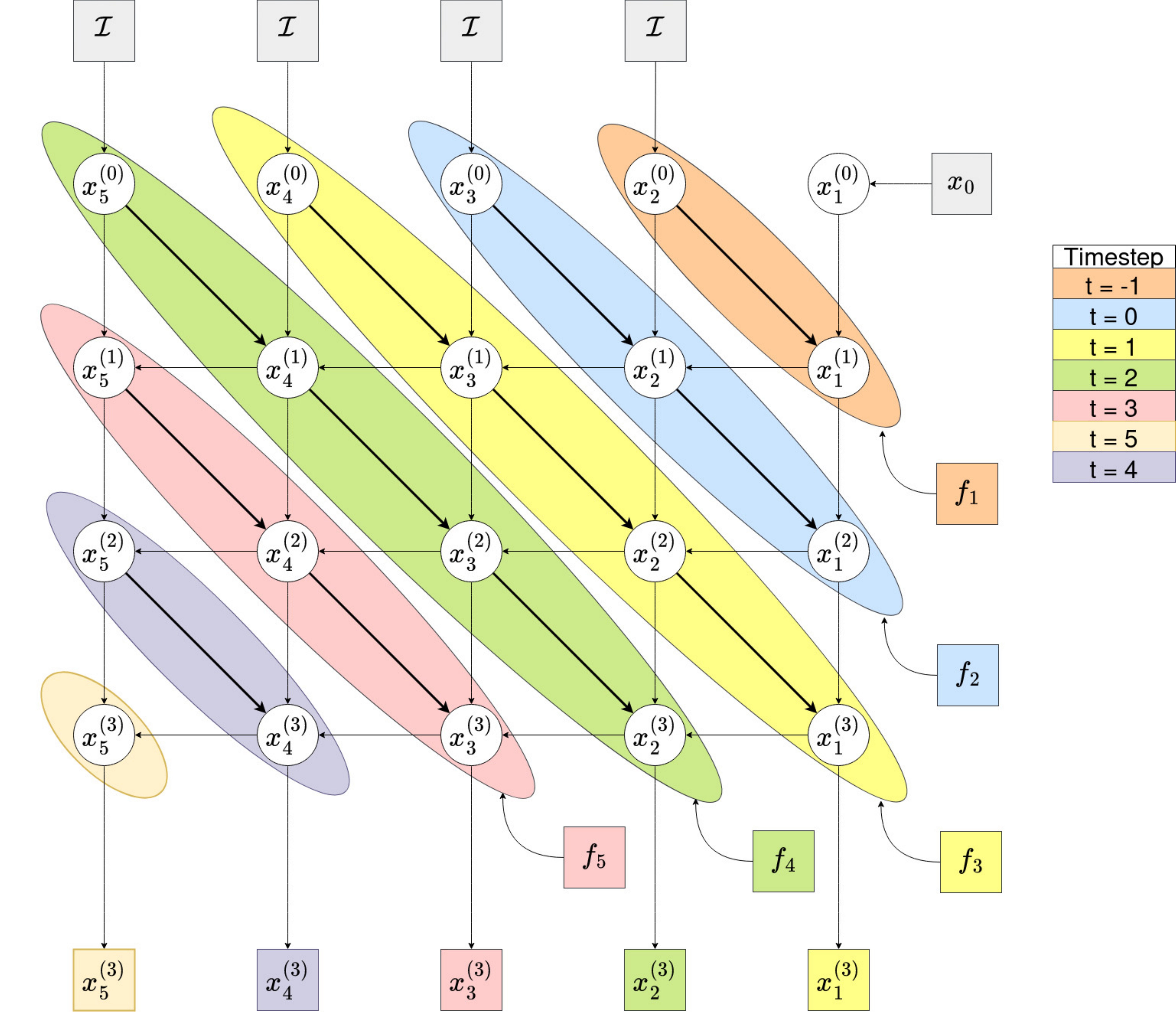}
		\caption{}
	\end{subfigure}
	\caption{Pictorial representation of the Offline-Online Lemma. Part (a) represents the offline updates, where the stage cost functions are all available during initialization. Part (b) shows the proposed online updates, where the functions are revealed sequentially. For this illustration, \(N = 5\) and \(W = 3\) is considered.}
	\label{oolemma}
\end{figure*}

The online implementation seeks to exploit this limited dependence structure to its advantage.  When $f_t$ is first revealed at time $t - W + 1 \geq 1$, it is used to calculate $x_t\^1$, which depends on $x_{t-1}\^1$, $x_t\^0$, and $x_{t+1}\^0$. Of these, $x_{t-1}\^1$ should already be available from time $t-W$ when $f_{t-1}$ was first revealed, while $x_t\^0$ and $x_{t+1}\^0$ are known from the algorithm initialization. Subsequently, $x_{t-1}\^2$ may be calculated as it depends on the previously revealed $f_{t-1}$, as well as the already available iterates $x_{t-2}\^2$,  $x_t\^1$, and $x_{t-1}\^1$, where $x_{t-2}\^2$ was similarly calculated in the previous iteration. Continuing in the same way, it can be seen that the updates for $x_{t-2}\^{3}$, $\ldots$, $x_{t-W+1}\^W$ can also be carried out.  The last of these updates, namely that for $x_{t-W+1}\^W$ is then used to take the control action at the current time $t-W+1$, and no further updates for $x_{t-W+1}$ are required. The idea is depicted in Figure \ref{oolemma}(b) for $N = 5$ and $W = 3$, where the iterates updated at the same time instant are all highlighted using the same color. 

We also note that when $f_t$ is first revealed, the update for $x_t\^1$ requires the initial value $x_{t+1}\^0$. Instead of initializing arbitrarily, we set $x_{t+1}\^0 = \argmin_{x \in \cX} f_t(x)$ for all $t \geq 1$, which should be easy to calculate since $f_t$ is proximable over \(\mathcal{X}\). For the first time instant, we initialize $x_1\^0 = x_0$. {This} initialization approach allows us to bound the initial regret with a multiple of the path length. 

The discussion so far does not apply to the boundary cases. The functions $\{f_1, f_2, \ldots, f_W\}$  are all simultaneously revealed at time $t = 1$. Therefore, before carrying out the updates for $x_1\^W$, $\ldots$, $x_W\^1$, we also need to calculate $x_1\^{W-1}$, $x_2\^{W-2}$, $\ldots$, $x_{W-1}\^1$, $x_1\^{W-2}$, $\ldots$, $x_{W-2}\^1$, $\ldots$, $x_1\^1$. These updates can be seen as corresponding to the hypothetical time instants $t = 2-W, \ldots, 0$, as depicted in Figure \ref{oolemma}(b). Also, no new functions are revealed for $t > N-W+1$, {therefore} only the updates for $x_t\^W$, $\ldots$,  $x_N\^{W+t-N}$ need to be carried out at these time instants. {{We refer to the online implementation of} \eqref{alt_prox_update} {as RHAPD and summarize it in Algorithm} \ref{RHAPD}}. The following lemma summarizes the discussion in this subsection. 

\begin{lemma}[\textbf{Offline-Online Lemma}] \label{offline_online}
	For a given initialization and step size rule, the iterates $x_t\^W$ obtained from Algorithm \ref{RHAPD} match the iterates $x_t\^W$ obtained from running \eqref{alt_prox_update} for $W$ iterations. 
\end{lemma}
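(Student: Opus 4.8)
The plan is to prove the Offline-Online Lemma by a careful bookkeeping argument: track exactly which iterate $x_i^{(k)}$ is computed at which (possibly hypothetical) time step in Algorithm \ref{RHAPD}, and verify that the value it receives there is \emph{identical} to the value the same iterate receives after running the abstract recursion \eqref{alt_prox_update} for $W$ passes. The essential observation is that \eqref{alt_prox_update}--\eqref{htk} defines $x_i^{(k)}$ purely as a function of three already-computed quantities, namely $x_{i-1}^{(k)}$, $x_i^{(k-1)}$, and $x_{i+1}^{(k-1)}$ (with the boundary conventions $x_0^{(k)}=x_0$ and the $i=N$ case dropping the $x_{i+1}^{(k-1)}$ term), together with the data $f_i$ and the fixed step size $\tau_i$. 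So the offline sequence of updates \eqref{evolution} is nothing but a specific \emph{topological order} of a DAG whose nodes are the pairs $(i,k)$ with $1\le i\le N$, $0\le k\le W$, and whose edges are $(i-1,k)\to(i,k)$, $(i,k-1)\to(i,k)$, $(i+1,k-1)\to(i,k)$. Lemma \ref{offline_online} then amounts to the statement that Algorithm \ref{RHAPD} computes the same nodes using a \emph{different} topological order of the same DAG, and that reordering the evaluations of a fixed deterministic recursion along any valid topological order leaves every node's value unchanged.

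Concretely, the steps I would carry out are as follows. First, I would make the DAG precise: state the dependency relation induced by \eqref{alt_prox_update}--\eqref{htk}, note that it is acyclic (the potential $i+k$ is strictly increasing along every edge), and conclude that the value $x_i^{(k)}$ produced by \eqref{alt_prox_update} is a well-defined function of the leaves $\{x_j^{(0)}\}_{j}$ and $x_0$, independent of evaluation order. Second, I would introduce the ``time stamp'' map used by Algorithm \ref{RHAPD}: the iterate $x_i^{(k)}$ is updated at time $t = i - k$ (this is exactly the substitution $k = t+W-i$ in Step 2, re-indexed; for the output iterate $x_t^{(W)}$ this gives time $t$, as claimed, and the initializations $x_{t+W}^{(0)}$ and the boundary block correspond to $t\le 0$). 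Third — the crux — I would verify that this time-stamp ordering is a valid topological order, i.e. that every edge $(i',k')\to(i,k)$ of the DAG goes from an earlier or simultaneous-but-already-processed node: the edge $(i,k-1)\to(i,k)$ respects $i-(k-1) > i-k$, the edge $(i+1,k-1)\to(i,k)$ respects $(i+1)-(k-1) > i-k$, and the edge $(i-1,k)\to(i,k)$ has the \emph{same} time stamp $i-k$, which is fine because within a fixed time $t$ Step 2's inner loop runs $i$ from $t+W-1$ \emph{down to} $t$, i.e. processes larger $i$ (hence larger $k$ among same-time nodes) first — wait, here I must be careful: at fixed time $t$ the processed pairs are $(t+W-1,1), (t+W-2,2),\dots,(t,W)$, so the same-time edge $(i-1,k)\to(i,k)$ would need $(i-1,k)$ to carry time $t$ too, but $(i-1)-k = t-1\neq t$; so in fact $x_{i-1}^{(k)}$ was updated at the \emph{previous} time step $t-1$ and is already available. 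I would spell this out: the three predecessors of the node processed at time $t$ with index $i$ are $x_{i-1}^{(k)}$ (time $t-1$, done), $x_i^{(k-1)}$ (time $t-1$, done, or an initialization if $k=1$), and $x_{i+1}^{(k-1)}$ (time $t-1$, done, or an initialization), matching exactly the informal paragraph preceding the lemma. Fourth, I would also check the boundary/edge cases explicitly — the hypothetical times $t = 2-W,\dots,0$ producing $x_1^{(W-1)},\dots,x_1^{(1)}$ etc., and the tail times $t>N-W+1$ where no new $f_t$ appears — confirming that in every case Algorithm \ref{RHAPD} assigns $x_i^{(k)}$ its correct predecessors and the correct data $f_i$.

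Given all predecessors agree and the update rule is the same deterministic formula with the same step sizes and initialization, a straightforward induction on the potential $i+k$ (equivalently on processing time) finishes it: base case $k=0$ is the shared initialization policy $x_{j}^{(0)}=\argmin_{x\in\cX}f_{j-1}(x)$ (and $x_1^{(0)}=x_0$), and the inductive step is immediate. In particular the output $x_t^{(W)}$ of Algorithm \ref{RHAPD} equals the $W$-th-pass iterate $x_t^{(W)}$ of \eqref{alt_prox_update}, which is the claim.

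The main obstacle is purely notational rather than conceptual: getting the index bookkeeping exactly right at the three kinds of boundaries — the left spatial boundary $i=1$ with $x_0^{(k)}=x_0$, the right spatial boundary $i=N$ where the $\nabla_2 g$ term vanishes (matching the two-case formula in Step 2 of Algorithm \ref{RHAPD}), and the temporal boundaries (the hypothetical pre-start times $t\le 0$ and the post-$N-W+1$ tail). I expect the cleanest write-up avoids an ad hoc case analysis by phrasing everything in terms of the DAG and invoking "evaluation order does not matter for a deterministic recursion on a DAG" once, then checking the time-stamp map $x_i^{(k)}\mapsto i-k$ is a topological order and that Step 2's inner loop direction (descending $i$) is consistent with it — after which the boundary cases are just instances, not exceptions.
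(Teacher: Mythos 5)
Your overall strategy — view \eqref{alt_prox_update} as a deterministic recursion on the acyclic dependency graph with nodes $(i,k)$ and edges from $(i-1,k)$, $(i,k-1)$, $(i+1,k-1)$ into $(i,k)$, note that the value at each node is independent of the evaluation order, and then check that Algorithm \ref{RHAPD}'s schedule is a valid topological order — is exactly the right idea, and it is essentially a formalization of the argument the paper itself gives (the paper offers no separate formal proof; the lemma ``summarizes the discussion'' around Figure \ref{oolemma}). However, the crux step as you wrote it contains concrete indexing errors, and since the entire content of this lemma is that bookkeeping, they matter. First, the time-stamp map is wrong: in Step 2 the node $(i,k)$ with $k=t+W-i$ is processed at outer time $t$, so the correct stamp is $\sigma(i,k)=i+k-W$ (each outer iteration processes the anti-diagonal $i+k=t+W$), not $i-k$; with $i-k$ your own sanity check fails, since $x_t^{(W)}$ would get stamp $t-W$ rather than $t$, and the inequalities you list for the edges then point in the wrong direction.

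Second, and more importantly, your final accounting of the predecessors is off in the one place where the inner-loop direction actually matters. Under the correct stamp, $(i-1,k)$ and $(i,k-1)$ indeed have $\sigma=t-1$ (or are initializations), but $(i+1,k-1)$ has $\sigma=(i+1)+(k-1)-W=t$: it is computed in the \emph{same} outer iteration, earlier in the inner loop precisely because the loop runs $i$ from $t+W-1$ \emph{down} to $t$ (and, when $k=1$, $x_{i+1}^{(0)}$ is produced by Step 1 of that same iteration, before Step 2). Your summary places this predecessor at time $t-1$; if that were true, the descending-$i$ order would be irrelevant, whereas it is exactly what makes the schedule a topological order. (Also, among same-iteration nodes larger $i$ corresponds to \emph{smaller} $k$, not larger, since $i+k$ is constant on the anti-diagonal.) Once these stamps and the same-iteration edge are corrected, your induction on $i+k$ (base case the shared initialization $x_1^{(0)}=x_0$, $x_{j}^{(0)}=\argmin_{x\in\mathcal{X}}f_{j-1}(x)$) goes through and yields the lemma, including the boundary iterations $t\le 0$ and $t>N-W+1$, matching the paper's discussion.
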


\begin{rem}\label{pgd_and_fista}
	The proximal gradient descent algorithm in \eqref{offline_proximal} (and its accelerated version) can likewise be implemented in an online fashion, thanks to the separable structure of the objective in \eqref{eq:problem}. {Still,} the proposed RHAPD algorithm, which uses the most up-to-date iterates in \eqref{prox_update}, outperforms both PGD {as well as its accelerated variant, FISTA.}

\end{rem}

\subsection{Regret Bound for RHAPD}\label{sec:regret-ns}

{ Thanks to the above Lemma \ref{offline_online}, most of our analysis can be focused on the updates in \eqref{alt_prox_update} for $W$ iterations. Under Assumption \ref{non_smooth}, we consider strongly convex and proximable, but possibly non-smooth stage costs. Theorem \ref{dynamic_regret_non_smooth_general_g} provides the regret bound for the case of any general convex switching cost under Assumption \ref{g_convex} and Theorem \ref{rhapd_quadratic_regret} specializes to the quadratic switching cost. In both cases, the overall analysis requires the following four key steps: 
\begin{enumerate}[label=\roman*)]
    \item Establishing a sufficient decrease property which bounds the per-iteration decrease in the objective. Lemma \ref{decrease_non_smooth} {in Appendix \hyperref[proof_of_decrease_non_smooth]{B} establishes} that the iterates \(\{\x\^k\}\) generated by \eqref{alt_prox_update} satisfy
		\begin{align}\label{sufficient_decrease_non_smooth_theory_marked}J(\x\^k) - J(\x\^{k-1}) \leq  - \rho \norm{\x\^k - \x\^{k-1}}^{2}, \end{align}
  where \(\rho := \frac{\mu}{2} + \frac{1}{\tau}-l_g.\) Therefore, for a decrease in the objective \(J\), we can choose the step size such that \(\rho > 0\), a common choice of the step size being \(\tau = \frac{1}{l_g}.\)
    \item Establishing a bound on the subgradient of $\Jt$. In particular, in Lemma \ref{subgrad_bound_non_smooth} {in Appendix \hyperref[proof_of_subgrad_bound_non_smooth]{C} }, we establish that for all $k \geq 1$, there exists a $\v\^k \in \partial \Jt(\x\^k)$, such that 
	\begin{align}\label{subgrad_bound_theory_marked}
		\norm{\v\^k} \leq \beta \norm{\x\^k - \x\^{k-1}},
	\end{align}
	 where $\beta^2 := 2\left(\sqrt{5}l_g + \frac{1}{\tau}\right)^2$. 
    \item Using the Polyak-Lojaseiwicz (PL) inequality and \eqref{sufficient_decrease_non_smooth_theory_marked}, \eqref{subgrad_bound_theory_marked} to establish a linear decrease in the optimality gap with respect to the initial optimality gap. 
  \begin{lemma}[\textbf{Rate of Convergence of APGD}]\label{rate_APGD_Non_Smooth}
	Under  \eqref{non_smooth} and \eqref{g_convex}, the optimality gap of the APGD algorithm decays as \begin{align}\label{opt_gap_bound}
		J(\x\^k) - J(\x^\star) \leq  \left(1+\frac{2\mu \rho}{\beta^2}\right)^{-k}\left(J(\x\^{0}) - J(\x^\star)\right),
		\end{align}
	where $\rho = \frac{\mu}{2} + \frac{1}{\tau} - l_g$ and $\beta^2 = 2\left(\sqrt{5}l_g + \frac{1}{\tau}\right)^2$. 
\end{lemma}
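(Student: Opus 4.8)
The plan is to carry out step (iii) of the outlined program: combine the sufficient-decrease inequality \eqref{sufficient_decrease_non_smooth_theory_marked}, the subgradient bound \eqref{subgrad_bound_theory_marked}, and a Polyak-\L ojasiewicz (PL) inequality coming from strong convexity, and then telescope. First I would record the PL inequality. Since each $f_t$ is $\mu_t \geq \mu$-strongly convex by Assumption \ref{non_smooth}, $g$ is convex by Assumption \ref{g_convex}, and the indicators $\ind_{\cX}$ are convex, the function $\Jt$ in \eqref{jtdef} is $\mu$-strongly convex on $\Rn^{dN}$; moreover $\x^\star$ (a minimizer of $J$ over $\bX$) also minimizes $\Jt$ over all of $\Rn^{dN}$. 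Hence for any $\x$ and any $\v \in \partial\Jt(\x)$ the quadratic lower bound $\Jt(\y) \geq \Jt(\x) + \ip{\v}{\y - \x} + \frac{\mu}{2}\norm{\y - \x}^2$ holds for all $\y$; minimizing the right-hand side over $\y$ (attained at $\y = \x - \v/\mu$) while evaluating the left-hand side at $\y = \x^\star$ gives
\begin{align}\nn
  \Jt(\x) - \Jt(\x^\star) \leq \frac{1}{2\mu}\norm{\v}^2 .
\end{align}
Because every iterate $\x\^k$ generated by \eqref{alt_prox_update} lies in $\bX$ (each block is the image of a prox carrying the indicator of $\cX$), we have $\Jt(\x\^k) = J(\x\^k)$ and $\Jt(\x^\star) = J(\x^\star)$, so $\Delta_k := J(\x\^k) - J(\x^\star) \geq 0$ is the genuine optimality gap. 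Applying the PL inequality to the specific $\v\^k \in \partial\Jt(\x\^k)$ supplied by Lemma \ref{subgrad_bound_non_smooth} and then invoking \eqref{subgrad_bound_theory_marked},
\begin{align}\nn
  \Delta_k \leq \frac{1}{2\mu}\norm{\v\^k}^2 \leq \frac{\beta^2}{2\mu}\norm{\x\^k - \x\^{k-1}}^2 .
\end{align}

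Next I would feed in the sufficient-decrease property from Lemma \ref{decrease_non_smooth}. Rearranging \eqref{sufficient_decrease_non_smooth_theory_marked} yields $\norm{\x\^k - \x\^{k-1}}^2 \leq \frac{1}{\rho}\big(J(\x\^{k-1}) - J(\x\^k)\big) = \frac{1}{\rho}(\Delta_{k-1} - \Delta_k)$, which is legitimate since the step size is chosen so that $\rho > 0$ (e.g.\ $\tau = 1/l_g$). Substituting into the previous display gives $\Delta_k \leq \frac{\beta^2}{2\mu\rho}(\Delta_{k-1} - \Delta_k)$, i.e.
\begin{align}\nn
  \Big(1 + \frac{2\mu\rho}{\beta^2}\Big)\Delta_k \leq \Delta_{k-1} .
\end{align}
A straightforward induction on $k$ (the case $k = 0$ being trivial) then produces $\Delta_k \leq \big(1 + \frac{2\mu\rho}{\beta^2}\big)^{-k}\Delta_0$, which is precisely \eqref{opt_gap_bound}.

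The telescoping and the PL derivation are routine; the only place that needs care is the bookkeeping that makes the per-iteration subgradient bound usable — namely verifying that the particular $\v\^k$ produced by Lemma \ref{subgrad_bound_non_smooth} is an element of $\partial\Jt(\x\^k)$ for the $\mu$-strongly convex $\Jt$ (so the PL inequality genuinely applies to it), and that $J$ and $\Jt$ agree along the iterates and at $\x^\star$ so that $\Delta_k$ is the true optimality gap. I do not expect a substantive obstacle beyond ensuring $\rho > 0$, which is guaranteed by the stated step-size choice and is exactly what makes the division by $\rho$ and the resulting contraction factor $\big(1 + \frac{2\mu\rho}{\beta^2}\big)^{-1} \in (0,1)$ well defined.
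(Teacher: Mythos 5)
Your proposal is correct and follows essentially the same route as the paper: strong convexity of $\Jt$ gives the PL inequality, the specific subgradient $\v\^k$ from Lemma \ref{subgrad_bound_non_smooth} is bounded via \eqref{subgrad_bound_theory_marked}, the sufficient-decrease property \eqref{sufficient_decrease_non_smooth_theory_marked} converts the displacement into the gap difference, and the resulting one-step contraction is iterated. The only cosmetic difference is that you derive the PL inequality from the quadratic lower bound while the paper cites it directly; the argument is otherwise identical.
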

\begin{proof}
    From assumptions \ref{non_smooth}, \ref{g_convex}  we observe that $\Jt$ is $\mu$-strongly convex. Hence, from the PL-inequality \cite{beck2017first}, we have that $\Jt(\x\^k) \leq \Jt(\x^\star) + \frac{\norm{\w}^2}{2\mu}$ for any $\w \in \partial \Jt (\x\^k)$. Here, since $\x\^k, \x^\star \in \bX$, we have that $\Jt(\x\^k) = J(\x\^k)$ and $\Jt(\x^\star) = J(\x^\star)$. Further, we choose the subgradient $\w = \v\^k$ whose norm can be bounded \eqref{subgrad_bound_theory_marked}, to obtain
	\begin{align}
		J(\x\^k)  &\leq  J(\x^\star) + \frac{\norm{\v\^k}^{2}}{2\mu}, \nonumber \\ &\stackrel{\eqref{subgrad_bound_theory_marked}}\leq  J(\x^\star)  + \frac{{\beta}^{2}}{2\mu} \norm{\x\^k - \x\^{k-1}}^{2}, \nn \\
		&\leqtext{\eqref{sufficient_decrease_non_smooth_theory_marked}}  J(\x^\star) + \frac{{\beta}^{2}}{2\mu\rho} \left(J(\x\^{k-1})-J(\x\^k)\right). \label{optgapproof}
	\end{align}
		The optimality gap can therefore be bounded by re-arranging \eqref{optgapproof} as follows
		\begin{align}
				J(\x\^k) - J(\x^\star) 
				&\leq \frac{\beta^2}{2\mu\rho}\Big(\big(J(\x\^{k-1}) - J(\x^\star)\big) -	\big(J(\x\^k) - J(\x^\star)\big)\Big),\nonumber \\
				&\leq \frac{J(\x\^{k-1}) - J(\x^\star)}{\left(1 + \frac{2\mu\rho}{\beta^2}\right)}\leq \frac{J(\x\^{0}) - J(\x^\star)}{\left(1 + \frac{2\mu\rho}{\beta^2}\right)^k}.\nn
		\end{align}
  This completes the proof.
\end{proof}
    \item Bounding the initial regret/optimality gap. {Particularly, Lemma \ref{dynamic_regret_non_smooth_initialization} in Appendix \hyperref[proof_dynamic_regret_non_smooth_general_g]{D} shows that} \begin{align}\label{reg_initialization_theory_marked}
	\mathrm{Reg}(\mathcal{I}) \leq G\left(1 + \frac{\gamma}{\mu}\right) \sum_{t = 1}^{N} \norm{\theta_t - \theta_{t - 1}}.
\end{align}
\end{enumerate}
For the sake of simplicity, we will consider a constant step size $\tau_t = \tau$ for all $1\leq t \leq N$, though the extension to the general case is straightforward.}
{
\begin{theorem}[\textbf{Regret of RHAPD for general switching cost}]\label{dynamic_regret_non_smooth_general_g}
	Under assumptions \ref{bounded_subgradient}, \ref{non_smooth}, \ref{g_convex}, the regret attained by RHAPD with the initialization policy \(\mathcal{I}\), that is \(x_{t + W} ^ {(0)} = \argmin_{x \in \mathcal{X}} f_{t + W - 1}(x)\), is bounded by \begin{equation}
		\mathrm{Reg}(\text{RHAPD}) \leq \frac{G(1 + \frac{\gamma}{\mu})}{\left(1 + \frac{2\mu \rho}{\beta ^ 2}\right) ^ {W}} \left(\sum_{t = 1} ^ {N} \norm{\theta_t - \theta_{t - 1}}\right),
	\end{equation}
	where \(\theta_0 := x_0, \theta_t = \argmin_{x \in \mathcal{X}} f_t(x)\);  $\rho = \frac{\mu}{2} + \frac{1}{\tau} - l_g$, $\beta^2 = 2\left(\sqrt{5}l_g + \frac{1}{\tau}\right)^2$, and the step size \(\tau\) is such that \(\rho > 0\).
\end{theorem}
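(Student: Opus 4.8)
The plan is to obtain the bound by composing three results already established in this section, so the proof of the theorem itself is essentially an assembly step. First I would invoke the Offline--Online Lemma (Lemma~\ref{offline_online}) to pass from the online algorithm to an offline recursion: the action $x_t^{(W)}$ that RHAPD plays at stage $t$ coincides with the $t$-th block of the iterate $\x^{(W)}$ produced by running the APGD update \eqref{alt_prox_update} for exactly $W$ sweeps, started from the initialization $\x^{(0)}$ prescribed by $\mathcal{I}$, namely $x_1^{(0)} = x_0$ and $x_s^{(0)} = \theta_{s-1}$ for $s \geq 2$. Hence, by the definition \eqref{regret} of dynamic regret,
\[
\mathrm{Reg}(\text{RHAPD}) = J(\x^{(W)}) - J(\x^\star).
\]

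Next I would apply the convergence rate of APGD. Under Assumptions~\ref{non_smooth} and \ref{g_convex}, $\Jt$ (equivalently $J$ restricted to $\bX$) is $\mu$-strongly convex, and the step size is chosen so that $\rho = \frac{\mu}{2} + \frac{1}{\tau} - l_g > 0$; therefore the sufficient-decrease inequality \eqref{sufficient_decrease_non_smooth_theory_marked} and the subgradient bound \eqref{subgrad_bound_theory_marked} both hold, and Lemma~\ref{rate_APGD_Non_Smooth} applies with $k = W$, giving
\[
J(\x^{(W)}) - J(\x^\star) \leq \Big(1 + \tfrac{2\mu\rho}{\beta^2}\Big)^{-W}\big(J(\x^{(0)}) - J(\x^\star)\big).
\]
It then remains to recognize that $J(\x^{(0)}) - J(\x^\star)$ is exactly $\mathrm{Reg}(\mathcal{I})$, the regret incurred by playing the initialization, and to invoke Lemma~\ref{dynamic_regret_non_smooth_initialization} (this is the only point where Assumption~\ref{bounded_subgradient} enters the final argument), which yields $\mathrm{Reg}(\mathcal{I}) \leq G(1 + \gamma/\mu)\sum_{t=1}^N \norm{\theta_t - \theta_{t-1}}$ with $\theta_0 = x_0$. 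Substituting this into the previous display gives the claimed bound.

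Because all three ingredients are available, there is no genuine obstacle in the theorem's proof itself --- it is a two-line composition. The two points that still deserve an explicit sentence are: the bookkeeping content of Lemma~\ref{offline_online}, i.e.\ that the boundary instants $t = 2-W, \dots, 0$ and $t > N - W + 1$ are correctly absorbed into the ``$W$ sweeps'' picture so that every output $x_t^{(W)}$ is indeed the $W$-th APGD iterate; and the fact that $\rho > 0$ simultaneously guarantees the descent property and the positivity of the contraction exponent $2\mu\rho/\beta^2$ used in Lemma~\ref{rate_APGD_Non_Smooth}. The substantive work --- proving the sufficient-decrease Lemma~\ref{decrease_non_smooth}, the subgradient bound Lemma~\ref{subgrad_bound_non_smooth}, and the initialization bound Lemma~\ref{dynamic_regret_non_smooth_initialization} --- is what the appendices are for, and establishing \eqref{subgrad_bound_theory_marked} for the alternating updates is where I expect the main difficulty to lie.
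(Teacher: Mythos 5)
Your proposal is correct and follows essentially the same route as the paper's own proof: identify $\mathrm{Reg}(\text{RHAPD}) = J(\x^{(W)}) - J(\x^\star)$ via Lemma~\ref{offline_online}, apply Lemma~\ref{rate_APGD_Non_Smooth} with $k = W$, and bound the initialization regret via Lemma~\ref{dynamic_regret_non_smooth_initialization}. No gaps.
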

\begin{proof}
We can bound the regret attained by RHAPD as 
    \begin{align}
	\mathrm{Reg}(\text{RHAPD}) &\stackrel{(a)}= J(\x\^W) - J(\x^\star), \nonumber \\
 &\stackrel{\eqref{opt_gap_bound}}\leq\frac{J(\x ^ {(0)}) - J(\x\^*)}{\left(1 + \frac{2\mu \rho}{\beta ^ 2}\right)^W} = \frac{\mathrm{Reg}(\mathcal{I})}{\left(1 + \frac{2\mu \rho}{\beta ^ 2}\right) ^ {W}}, \nonumber \\
 &\stackrel{\eqref{reg_initialization_theory_marked}}\leq\frac{G(1 + \frac{\gamma}{\mu})}{\left(1 + \frac{2\mu \rho}{\beta ^ 2}\right) ^ {W}} \left(\sum_{t = 1} ^ {N} \norm{\theta_t - \theta_{t - 1}}\right)\nonumber, 
\end{align}
where \((a)\) follows from Lemma \ref{offline_online}. This completes the proof.
\end{proof}
}
{
In the special case when the switching cost is quadratic \(g(x, y) = \frac{\gamma}{2}\norm{x - y} ^ 2\), we can obtain a tighter bound on the regret, given by the following Theorem.
\begin{theorem}[\textbf{Regret of RHAPD for quadratic switching cost}]\label{rhapd_quadratic_regret}
	Under assumptions \ref{bounded_subgradient}, \ref{non_smooth},  the regret attained by RHAPD with the {initialization policy \(\mathcal{I}\)}, that is \(\displaystyle x_{t + W}\^0 = \argmin_{x \in \cX} f_{t + W - 1}(x)\), is bounded by \begin{align}\nn
		\mathrm{Reg}(\text{RHAPD}) \leq \frac{G (1 + \frac{\gamma}{\mu})}{\left(1 + \frac{2\mu {\rho_q}}{\beta_q^2}\right)^{W}} \left(\sum_{t = 1}^{N} \norm{\theta_t - \theta_{t - 1}}\right),
	\end{align}
	where \(\displaystyle \theta_{0} = x_{0}, \theta_t = \argmin_{x \in \cX} f_t(x)\); $\rho_q := \frac{\mu}{2} + \frac{1}{\tau} - \gamma$, \begin{align*}
		\beta^2_q := 2\left(\gamma^2 + \max\left\{\left(2\gamma - \frac{1}{\tau}\right)^2, \left(\gamma - \frac{1}{\tau}\right)^2\right\}\right),
	\end{align*} 
 and the step size \(\tau\) is such that \(\rho_{q} > 0\).
\end{theorem}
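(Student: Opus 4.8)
The plan is to follow the same four-step template used for Theorem~\ref{dynamic_regret_non_smooth_general_g} (sufficient decrease, subgradient bound, PL-based linear convergence, initialization bound), but to replace the generic smoothness constant $l_g$ by quantities read directly off the explicit gradients of the quadratic switching cost, $\nabla_1 g(x,y)=\gamma(x-y)$ and $\nabla_2 g(x,y)=-\gamma(x-y)$. First, observe that $g(x,y)=\tfrac{\gamma}{2}\norm{x-y}^2$ is convex, $2\gamma$-smooth, and satisfies \eqref{gbound} with equality, so Assumption~\ref{g_convex} holds with $l_g=2\gamma$; in particular the initialization analysis is untouched, and Lemma~\ref{dynamic_regret_non_smooth_initialization} still gives $\mathrm{Reg}(\mathcal{I})\le G(1+\tfrac{\gamma}{\mu})\sum_{t=1}^N\norm{\theta_t-\theta_{t-1}}$, i.e.\ \eqref{reg_initialization_theory_marked}. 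By the Offline--Online Lemma~\ref{offline_online} it then suffices to analyze $W$ steps of the alternating update \eqref{alt_prox_update}; the remaining work is to sharpen the two per-iteration bounds and re-run the convergence argument of Lemma~\ref{rate_APGD_Non_Smooth}.

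For the sufficient decrease, I would use that $x_t\^k$ minimizes the $(\mu_t+\tfrac1\tau)$-strongly convex function $x\mapsto f_t(x)+\ind_\cX(x)+\tfrac{1}{2\tau}\norm{x-x_t\^{k-1}+\tau\nabla H_t\^k(x_t\^{k-1})}^2$, which after expanding the square yields
\begin{align}\nn
f_t(x_t\^k)-f_t(x_t\^{k-1}) \le -\Big(\tfrac1\tau+\tfrac{\mu_t}{2}\Big)\norm{x_t\^k-x_t\^{k-1}}^2 - \ip{\nabla H_t\^k(x_t\^{k-1})}{x_t\^k-x_t\^{k-1}}.
\end{align}
Summing over $t$, and noting that updating block $t$ in \eqref{evolution} alters exactly the two $g$-terms making up $H_t\^k$ in \eqref{htk}, so $H(\x\^k)-H(\x\^{k-1})=\sum_t\big(H_t\^k(x_t\^k)-H_t\^k(x_t\^{k-1})\big)$, and then invoking the exact second-order identity for the quadratic $H_t\^k$ (Hessian in $x_t$ equal to $2\gamma\I$ for $t<N$ and $\gamma\I$ for $t=N$), the linear terms cancel and the residual quadratic term is at most $\gamma\norm{x_t\^k-x_t\^{k-1}}^2$ per block. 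This gives $J(\x\^k)-J(\x\^{k-1})\le-\rho_q\norm{\x\^k-\x\^{k-1}}^2$ with $\rho_q=\tfrac{\mu}{2}+\tfrac1\tau-\gamma$, explaining the step-size condition $\rho_q>0$.

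For the subgradient bound, I would read off the optimality condition of the prox: $s_t\^k:=\tfrac1\tau(x_t\^{k-1}-x_t\^k)-\nabla H_t\^k(x_t\^{k-1})\in\partial f_t(x_t\^k)+\partial\ind_\cX(x_t\^k)$, so that $\v\^k$ with blocks $[\v\^k]_t=s_t\^k+[\nabla H(\x\^k)]_t$ lies in $\partial\Jt(\x\^k)$. Substituting the linear gradients of $g$ and writing $\delta_t:=x_t\^k-x_t\^{k-1}$, the mixed-iterate $g$-terms telescope to $[\v\^k]_t=(2\gamma-\tfrac1\tau)\delta_t-\gamma\,\delta_{t+1}$ for $t<N$ and $[\v\^k]_N=(\gamma-\tfrac1\tau)\delta_N$. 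Applying $\norm{a+b}^2\le2\norm{a}^2+2\norm{b}^2$ blockwise and re-indexing the $\delta_{t+1}$ contributions, every block's coefficient is dominated by $\beta_q^2=2\big(\gamma^2+\max\{(2\gamma-\tfrac1\tau)^2,(\gamma-\tfrac1\tau)^2\}\big)$, giving $\norm{\v\^k}\le\beta_q\norm{\x\^k-\x\^{k-1}}$. With these two bounds, the PL inequality for the $\mu$-strongly convex $\Jt$ reproduces the argument of Lemma~\ref{rate_APGD_Non_Smooth} verbatim with $\rho_q,\beta_q^2$ replacing $\rho,\beta^2$, so $J(\x\^W)-J(\x^\star)\le(1+\tfrac{2\mu\rho_q}{\beta_q^2})^{-W}\mathrm{Reg}(\mathcal{I})$; combined with Lemma~\ref{offline_online} and the initialization bound this is the claimed inequality.

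The main obstacle is the subgradient computation: one must carefully track the cancellation among the $g$-gradients evaluated at the mixed iterates $\x\^{k-1}$ and $\x\^k$ to reach the clean coefficients $2\gamma-\tfrac1\tau$ and $\gamma-\tfrac1\tau$, and then choose the splitting in $\norm{a+b}^2\le2\norm{a}^2+2\norm{b}^2$ together with the index shift so that the interior blocks (coefficient involving $(2\gamma-\tfrac1\tau)^2$) and the boundary block $t=N$ (coefficient involving $(\gamma-\tfrac1\tau)^2$) are both absorbed --- it is precisely this boundary block, carrying only one incident $g$-term, that forces the $\max$ in $\beta_q^2$. By contrast, the sufficient-decrease step is essentially a specialization of Lemma~\ref{decrease_non_smooth} with the partial smoothness constant $\gamma$ in place of $l_g$, and the final assembly is routine.
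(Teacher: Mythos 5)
Your proposal is correct and follows essentially the same route as the paper: the paper likewise keeps the initialization bound and the Offline--Online Lemma unchanged, specializes the sufficient-decrease constant to $\rho_q$ via the exact quadratic structure of $H_t\^k$ (Corollary~\ref{decrease_quadratic}), derives the same explicit subgradient blocks $(2\gamma-\tfrac1\tau)\delta_t-\gamma\delta_{t+1}$ and $(\gamma-\tfrac1\tau)\delta_N$ with the identical $\norm{a+b}^2\le 2\norm{a}^2+2\norm{b}^2$ splitting and re-indexing to obtain $\beta_q$ (Lemma~\ref{subgrad_bound_quadratic}), and then reruns the PL-based argument of Lemma~\ref{rate_APGD_Non_Smooth}. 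Your identification of the boundary block $t=N$ as the source of the $\max$ in $\beta_q^2$ is exactly how the combined bound in Appendix~E is assembled.
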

}
{Since the quadratic switching cost is $\gamma$-smooth in both of its arguments, the bounds \eqref{sufficient_decrease_non_smooth_theory_marked}, \eqref{subgrad_bound_theory_marked} are already applicable. However, the special form of the switching cost function allows us to obtain tighter  bounds on intermediate quantities as well as the dynamic regret of Algorithm \ref{RHAPD}. {In Corollary \ref{decrease_quadratic} in Appendix \hyperref[proof_subgrad_bound_quadratic]{E}}, we show that the constants \(\rho\) and \(\beta\) in the sufficient decrease property \eqref{sufficient_decrease_non_smooth_theory_marked} and subgradient bound \eqref{subgrad_bound_theory_marked} can be tightened to \(\rho_{q}\) and \(\beta_{q}\) respectively. Finally, proceeding similarly as the proof of Theorem \ref{dynamic_regret_non_smooth_general_g}, we can prove Theorem \ref{rhapd_quadratic_regret}. The resulting steps are skipped for the sake of brevity.}
\subsection{Receding Horizon Alternating Minimization} \label{RHAM}
For the quadratic switching cost $g(x,y) = \frac{\gamma}{2}\norm{x-y}^2$ and for a particular choice of step sizes, it turns out that APGD algorithm becomes the well-known alternating minimization or block coordinate descent algorithm \cite{bezdek2003convergence,beck2015convergence, netrapalli2013phase,o2007alternating}. To see this, observe that for the quadratic switching cost, we have that $\nabla H_t\^k(x_t\^{k-1}) = \gamma (2x_t\^{k-1} - x_{t - 1}\^k - x_{t + 1}\^{k-1})$ for $1 \leq t \leq N-1$.  Letting $\tau_t = \frac{1}{2\gamma}$ for $1\leq t \leq N-1$, the updates in \eqref{alt_prox_update} can be written as 
\begin{align}
	x_t\^k &= \prox_{\frac{f_t}{2\gamma}+\ind_{\cX}} \hspace{-1mm}\Big[x_t\^{k-1} - \frac{1}{2}\left(2x_t\^{k-1} - x_{t - 1}\^k - x_{t + 1}\^{k-1}\right)\Big], \nonumber\\
 	&\hspace{-3mm}= \argmin_{x_t \in \cX} f_t(x_t)  + \frac{\gamma}{2}\norm{x_t - x_{t - 1}\^k}^2  +\frac{\gamma}{2} \norm{x_t - x_{t + 1}\^{k-1}}^2\nonumber,\\
	&\hspace{-3mm} =\argmin_{x_t \in \cX} J(x_1\^k, \ldots, x_{t - 1}\^k, x_t, x_{t + 1}\^{k-1}, \ldots, x_N\^{k-1}), \label{amt}
\end{align}
where in the last equality, we have used the notation $J(\x) = J(x_1, \ldots, x_N)$. In the same way, for $t = N$, we have that $\nabla H_N\^k(x_N\^{k-1}) = \gamma(x_N\^{k-1} - x_{N - 1}\^k)$. Setting $\tau_N = \frac{1}{\gamma}$, the update for $x_N$ in \eqref{alt_prox_update} can be written as 
\begin{align}
	x_N\^k &= \prox_{\frac{1}{\gamma}f_N(x_N) + \ind_{\cX}}(x_{N - 1}\^k) = \argmin_{x_N \in \cX}J(x_1\^k, \ldots, x_{N - 1}\^k, x_N). \label{amn}
\end{align}
To summarize, for the special case of quadratic switching costs and for a specific choice of step sizes, the proposed RHAPD algorithm reduces to applying block coordinate descent on the objective $J(\x)$. We refer to the alternating minimization algorithm in \eqref{amt}, \eqref{amn} applied to \eqref{eq:problem} as the RHAM algorithm. Naturally, the regret bounds developed in the previous section for the general RHAPD algorithm are also applicable to the RHAM algorithm.

\begin{algorithm*}[t]
	\caption {\textbf{R}eceding \textbf{H}orizon \textbf{A}lternating \textbf{P}roximal \textbf{D}escent for \textbf{S}mooth Stage Costs (RHAPD-S)}
	\begin{algorithmic}[1]
		\STATE\textbf{Input:} $x_{0}, \mathcal{X}, \gamma > 0, W, N;$
		\STATE \textbf{Initialize} \(x_1\^0 = x_{0};\)
		\STATE \textbf{for} \(t = 2 - W\) \textbf{to} \(N\)
		\STATE \hspace{3mm}\textsc{Step 1: } \textbf{Initialize} \(x_{t + W}\^0\) by OGD: 
			$x_{t + W}\^0 = \px{\cX}{x_{t + W - 1}\^0 - \eta \nabla f_{t + W - 1}(x_{t + W - 1}\^0)}$, \(\eta\) is a suitably selected step size;
		\STATE \hspace{3mm}\textsc{Step 2: } \textbf{Update} \(x_{t + W - 1}, x_{t + W - 2}, \ldots, x_t\)
		\STATE \hspace{3mm}\textbf{for} \(i = t + W - 1\) \textbf{down to} \(t\)
			\STATE \hspace{3mm}\hspace{3mm} \(k = t + W - i;\)
			\STATE \hspace{3mm}\hspace{3mm} Update \(x_i\^k\) in the following manner: \begin{align}\nn
				x_i\^k = \left.
						\begin{cases}
						  \px{\cX}{\frac{\gamma \tau_i(x_{i - 1}\^k + x_{i + 1}\^{k-1}) + x_i\^{k-1} - \tau_i \nabla f_i(x_i\^{k-1}) }{2\gamma\tau_i + 1}} & \text{if } 1 \leq i \leq N - 1, \\
						  {\px{\cX}{\frac{\gamma \tau_i (x_{i - 1}\^k) + x_i\^{k-1} - \tau_i \nabla f_i(x_i\^{k-1})}{\gamma \tau_i + 1}}} & \text{if } i = N.
						\end{cases}
				\right.
			\end{align}
			\STATE\hspace{3mm}\textbf{end}
		\STATE \textbf{Output: } \(x_t\^W\) at stage \(t \geq 1;\)
		\STATE\textbf{end}
	\end{algorithmic}
	\label{RHAPD_Smooth}
\end{algorithm*}

\section{Smooth Stage Costs and Quadratic Switching Costs}\label{smoothOCO}
This section considers the setting when the stage cost functions $f_t$ are not proximable or if the proximal operation with respect to $f_t$ is too costly. We develop algorithms that instead depend on the gradient $\nabla f_t(x)$ but require $f_t$ to be smooth. A quadratic switching cost given by $g(x,y) = \frac{\gamma}{2}\norm{x-y}^2$ is considered. As in Section \ref{sec-prox}, we first introduce the PGD algorithm and subsequently modify it to develop its alternating variant, which admits an online implementation. {This is followed by the dynamic regret bound analysis for the proposed algorithm.}

\subsection{Alternating Proximal Gradient Method for Smooth Stage Costs}
Recalling that $J(\x) = F(\x) + H(\x)$, {we can write the PGD updates differently when $F$ is not proximable in the following manner:}
\begin{align}
	\x\^k &= \prox_{{\tau} H + \ind_\bX}(\x\^{k-1} - \tau\nabla F(\x\^{k-1})), \nn \\
	&= \argmin_{\{x_t\in\cX\}_{t=1}^N} \bigg(\frac{\gamma}{2}\sn \norm{x_t-x_{t-1}}^2 + \sn\frac{1}{2{\tau}}\norm{x_t - x_t\^{k-1} + {\tau} \nabla f_t(x_t\^{k-1})}^2 \bigg),\label{coupled}
\end{align}
{The presence of terms of the form $\frac{\gamma}{2}(\norm{x_t - x_{t - 1}}^2 + \norm{x_t - x_{t + 1}}^2)$ in \eqref{coupled} does not allow separating the minimization into individual minimizations with respect to each $x_t$ as was possible in \eqref{separable}}. Thus, the proximal gradient method cannot be implemented in an online fashion for smooth stage costs. 

Towards developing an online algorithm, let us re-use our idea of sequentially updating the iterates $\{x_t\}_{t=1}^N$ while making use of the most recent updated values at each iteration. As in the APGD algorithm, we replace $H$ with 
\begin{align*}
	H_t\^k (x_t) = 
	\begin{cases}
		\frac{\gamma}{2} \bigg(\norm{x_t - x_{t - 1}\^k}^{2} + \norm{x_t - x_{t + 1}\^{k-1}}^{2}\bigg) & t < N, \\
		\frac{\gamma}{2} \norm{x_t - x_{t - 1}\^k}^{2} &  t = N,
	\end{cases}
\end{align*}
which is the same as in \eqref{htk} for the quadratic switching cost. Since $H_t\^k(x_t)$ depends only on $x_t$, it decouples the minimization in \eqref{coupled} and {allowing $x_t$ to be updated as}
 \begin{align}
 	{x_t\^k = \prox_{\tau_tH_t\^k + \ind_{\cX}}\left(x_t\^{k-1} - \tau_t \nabla f_t(x_t\^{k-1})\right)}.\label{xtk-update-smooth}
 \end{align}
We refer to the updates in \eqref{xtk-update-smooth} as the APGD-S {algorithm}. Observe that since $H_t\^k$ is quadratic, the proximal operation can be written in terms of a projection operation onto the set $\cX$. Consequently, the updates in \eqref{xtk-update-smooth} are significantly cheaper than those in \eqref{coupled}. 

\subsection{Online Implementation}\label{sec:initialization}
For the online implementation, we observe that the update for $x_t\^k$ for $1 \leq t \leq N-1$ depends on  $x_{t - 1}\^k$, $x_t\^{k-1}$, $x_{t + 1}\^{k-1}$, and $\nabla f_t(x_t\^{k-1})$, and follows the evolution depicted in \eqref{evolution}. Hence, following similar arguments as in Section \ref{sec-prox}, we see that when $f_t$ is first revealed, it can be used to calculate the updates for $x_t\^1$, $\ldots$, $x_{t-W+1}\^W$, all of which depend on available or previously updated quantities. 

As $f_t$ are not assumed proximable, the initialization strategy adopted in Algorithm \ref{RHAPD} may not be viable. Instead, we make use of the online gradient descent (OGD)-based initialization strategy, same as {in} \cite{li2020online}, where $x_t\^0 = \px{\cX}{x_{t-1}\^0 - \eta\nabla f_{t-1}(x_{t-1}\^0)}$ and $x_1\^0$ is initialized to \(x_1 ^ {(0)} = x_{0}\). The complete online algorithm is summarized in Algorithm \ref{RHAPD_Smooth}. 

\begin{rem}	As per \cite[Proposition 2]{mokhtari2016online}, choosing \(\eta \leq \frac{1}{l}\) ensures that the following gradient descent update: \(x_{t} ^ {(0)} = \px{\mathcal{X}}{x_{t - 1} ^ {(0)} - \eta \nabla f_{t - 1}(x_{t - 1} ^ {(0)})}\) satisfies the contraction property, i.e., \(||x_{t} ^ {(0)} - \theta_{t - 1}|| \leq \sqrt{1 - \frac{\mu}{L}}||x_{t - 1} ^ {(0)} - \theta_{t - 1}||\), where \(\theta_{t} = \argmin_{x \in \cX} f_{t}(x)\). In other words, the updated point \(x_{t} ^ {(0)}\) is more closer to the optimizer \(\theta_{t - 1}\) than the point \(x_{t - 1} ^ {(0)}\) from which the descent step was performed. This contraction is in fact the crucial idea behind \cite[Theorem 4]{li2020online} which bounds the initial regret attained by the OGD-based initialization strategy. Therefore, similar to \cite{mokhtari2016online} and \cite{li2020online}, we assume that \(\eta \le \frac{1}{l}\). However, we set \(\eta = \frac{1}{l}\) in Theorem \ref{regret_RHAPD-S} to get a simplified expression for the regret attained by RHAPD-S.
\end{rem}

\subsection{Regret bound for RHAPD-S}
In this section, we provide the regret bound for RHAPD-S. {The analysis proceeds along the steps laid out in Section \ref{sec:regret-ns} for bounding the regret of RHAPD. {Specifically}, we establish the sufficient decrease property {(Lemma \ref{lem:sufficient_decrease_smooth}) and subgradient bound (Lemma \ref{subgrad_bound_smooth} in Appendix \hyperref[proof_of_subgrad_bound_smooth]{G})} for the iterates produced by \eqref{xtk-update-smooth}. Further, by using the PL inequality, we can establish that the optimality gap of \eqref{xtk-update-smooth} can be bounded similarly as RHAPD. We skip this for the sake of brevity (refer Lemma \ref{rate_APGD_Non_Smooth}) instead.}

As in the proof of Theorem \ref{dynamic_regret_non_smooth_general_g}, we denote the initialization regret by \(\mathrm{Reg}(\mathcal{I}) = J(\x\^0) - J(\x^\star)\). As RHAPD-S uses OGD for initialization, using the bound from \cite[Theorem 4]{li2020online} for the initialization regret of OGD, we arrive at the following theorem:

   \begin{theorem}[\textbf{Regret of RHAPD-S}]\label{regret_RHAPD-S}
	   Under assumptions \ref{bounded_subgradient}, \ref{smooth}, with OGD step size \(\eta = \frac{1}{l}\), the regret of RHAPD-S is bounded by \begin{align}\nn
		   \mathrm{Reg}(\textsc{RHAPD-S}) \leq \frac{\delta}{\left(1 + \frac{2\mu \rho_s}{\beta_s^2}\right) ^ {W}} \sum_{t = 1}^{N} \norm{\theta_t - \theta_{t - 1}},
	   \end{align} where \(\delta := \left(\frac{{\beta_s}}{l} + 1\right)\frac{G}{1 - \kappa}\), \(\kappa := \sqrt{1 - \frac{\mu}{l}}\), \(\theta_t = \argmin_{x \in \cX}f_t(x)\), \(\theta_{0} = x_{0}\); \(\beta_s^2 := 2 \left(l + \gamma + \frac{1}{\tau}\right)^2\),  \begin{align}\nn \rho_s := \min \bigg\{ \min_{t \in [1, N - 1]} \bigg(\frac{1}{\tau} - \frac{l_t}{2} + \gamma \bigg), \frac{1}{\tau} - \frac{l_N}{2} + \frac{\gamma}{2} \bigg\},\end{align} and the step size \(\tau\) is such that \(\rho_{s} > 0\).
   \end{theorem}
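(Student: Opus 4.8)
The plan is to prove Theorem~\ref{regret_RHAPD-S} from the same three ingredients used for Theorem~\ref{dynamic_regret_non_smooth_general_g}, now applied to the APGD-S recursion \eqref{xtk-update-smooth}: an offline--online reduction, a per-iteration linear contraction of the optimality gap, and a bound on the regret incurred by the OGD initialization $\x^{(0)}$. First I would invoke an offline--online lemma analogous to Lemma~\ref{offline_online}: the bookkeeping of Section~\ref{sec:initialization} on the dependency graph \eqref{evolution} shows that the action $x_t^{(W)}$ returned by Algorithm~\ref{RHAPD_Smooth} at stage $t \ge 1$ coincides with the iterate $x_t^{(W)}$ produced by running \eqref{xtk-update-smooth} for $W$ steps from $\x^{(0)}$. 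Hence it suffices to prove (i) $J(\x^{(W)}) - J(\x^\star) \le (1 + 2\mu\rho_s/\beta_s^2)^{-W}\,\mathrm{Reg}(\mathcal{I})$ and (ii) $\mathrm{Reg}(\mathcal{I}) := J(\x^{(0)}) - J(\x^\star) \le \delta\,\pn$ (writing $\pn$ for $\sum_{t=1}^{N}\|\theta_t - \theta_{t-1}\|$ with $\theta_0 := x_0$), and then chain them.

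For (i) I would first establish the two workhorse lemmas for \eqref{xtk-update-smooth}, paralleling Section~\ref{sec:regret-ns}. The sufficient decrease property (Lemma~\ref{lem:sufficient_decrease_smooth}) follows because each block update minimizes over $\cX$ the surrogate $f_t(x_t^{(k-1)}) + \langle\nabla f_t(x_t^{(k-1)}), x_t - x_t^{(k-1)}\rangle + \tfrac{1}{2\tau}\|x_t - x_t^{(k-1)}\|^2 + H_t^{(k)}(x_t)$: combining the descent inequality for the $l_t$-smooth $f_t$ with the strong convexity contributed by the quadratic prox term and the quadratic $H_t^{(k)}$ (which is $2\gamma$-strongly convex for $t < N$ but only $\gamma$-strongly convex for $t = N$, whence the min defining $\rho_s$), and summing over $t$ so that the shared switching terms telescope, yields $J(\x^{(k)}) - J(\x^{(k-1)}) \le -\rho_s\|\x^{(k)} - \x^{(k-1)}\|^2$. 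The subgradient bound (Lemma~\ref{subgrad_bound_smooth}) comes from the first-order optimality condition of each block update, namely $\tfrac{1}{\tau}(x_t^{(k-1)} - x_t^{(k)}) - \nabla f_t(x_t^{(k-1)}) - \nabla H_t^{(k)}(x_t^{(k)}) \in \partial\ind_{\cX}(x_t^{(k)})$; assembling these produces a $\v^{(k)} \in \partial\Jt(\x^{(k)})$ whose $t$-th block equals $\nabla f_t(x_t^{(k)}) - \nabla f_t(x_t^{(k-1)}) + \tfrac{1}{\tau}(x_t^{(k)} - x_t^{(k-1)}) + \gamma(x_{t+1}^{(k)} - x_{t+1}^{(k-1)})$, the last term being the discrepancy between $\nabla H_t^{(k)}(x_t^{(k)})$ and $[\nabla H(\x^{(k)})]_t$ — this is where the alternating structure enters, since $\nabla H_t^{(k)}$ is the gradient of $H$ at no single point. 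Bounding each piece by $l_t$-, $\tfrac{1}{\tau}$-, and $\gamma$-Lipschitz estimates gives $\|\v^{(k)}\| \le \beta_s\|\x^{(k)} - \x^{(k-1)}\|$ with $\beta_s^2 = 2(l + \gamma + \tfrac{1}{\tau})^2$. Feeding these into the PL inequality for the $\mu$-strongly convex $\Jt$ (Assumption~\ref{smooth}) exactly as in Lemma~\ref{rate_APGD_Non_Smooth} gives the one-step contraction $J(\x^{(k)}) - J(\x^\star) \le (1 + 2\mu\rho_s/\beta_s^2)^{-1}(J(\x^{(k-1)}) - J(\x^\star))$, and iterating $W$ times yields (i).

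Step (ii) is, I expect, the main obstacle, since the initialization is now an OGD trajectory rather than a one-shot proximal step as in Section~\ref{sec-prox}. With $\eta = 1/l$, \cite[Proposition~2]{mokhtari2016online} gives the contraction $\|x_t^{(0)} - \theta_{t-1}\| \le \kappa\|x_{t-1}^{(0)} - \theta_{t-1}\|$ with $\kappa = \sqrt{1 - \mu/l}$; unrolling this and inserting the triangle inequality across consecutive minimizers $\theta_s \to \theta_{s-1}$ gives $\|x_t^{(0)} - \theta_t\| \le \sum_{s=1}^{t}\kappa^{t-s}\|\theta_s - \theta_{s-1}\|$ with $\theta_0 = x_0$, and summing the resulting geometric series over $t$ gives $\sum_{t=1}^{N}\|x_t^{(0)} - \theta_t\| \le \tfrac{1}{1-\kappa}\,\pn$. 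Then I would bound $\mathrm{Reg}(\mathcal{I}) = J(\x^{(0)}) - J(\x^\star)$ using convexity of $J$ together with the subgradient estimate of Lemma~\ref{subgrad_bound_smooth}, the Lipschitz bound $\|\nabla f_t(x_t^{(0)})\| \le G$ (Assumption~\ref{bounded_subgradient}), and the identity $\|\nabla f_t(x_t^{(0)})\| = \|\nabla f_t(x_t^{(0)}) - \nabla f_t(\theta_t)\| \le l\|x_t^{(0)} - \theta_t\|$ — the computation being that of \cite[Theorem~4]{li2020online} specialized to the present constants ($\beta_s$ and $\eta = 1/l$) — to turn the geometric estimate above into $\mathrm{Reg}(\mathcal{I}) \le (\tfrac{\beta_s}{l} + 1)\tfrac{G}{1-\kappa}\,\pn = \delta\,\pn$. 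Combining (i) and (ii) with $k = W$ yields the claimed bound. The delicate points to verify are that the RHAPD-S online schedule (not RHGD's) still reproduces the OGD-initialized offline iterates, and that $\rho_s$ and $\beta_s$ as stated remain valid for the alternating recursion \eqref{xtk-update-smooth} even though it is not an instance of proximal gradient descent.
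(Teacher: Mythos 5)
Your proposal is correct and follows essentially the same route as the paper: an offline--online reduction for Algorithm~\ref{RHAPD_Smooth}, the sufficient-decrease and subgradient-bound lemmas for the recursion \eqref{xtk-update-smooth} (the paper's Lemmas~\ref{lem:sufficient_decrease_smooth} and~\ref{subgrad_bound_smooth}, with exactly your $\rho_s$ and $\beta_s$), the PL-based geometric contraction of the optimality gap, and the OGD initialization bound giving $\mathrm{Reg}(\mathcal{I}) \le \delta\,\mathcal{P}_N$. The only difference is presentational: the paper imports the initialization bound directly from \cite[Theorem~4]{li2020online} rather than re-deriving the contraction/geometric-series argument you sketch, and your sign conventions in the block subgradient $v_t^{(k)}$ differ harmlessly from the paper's since only norms are used.
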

   \begin{proof}
	Applying the PL inequality, we establish the following: \begin{align}\nn
		J(\x ^ {(W)}) - J(\x ^ {*}) \leq \frac{J(\x ^ {(0)}) - J(\x ^ {*})}{\bigg(1 + \frac{2\mu \rho_{s}}{\beta_{s} ^ {2}}\bigg) ^ W}.
	\end{align}
	Note that this is similar to \eqref{opt_gap_bound}, however with different constants \(\rho_{s}, \beta_{s}\). It follows from \cite[Theorem 4]{li2020online} that the regret of the OGD initialization policy is bounded as 
	\begin{align}\nn
		\text{Reg}(\mathcal{I}) \leq \delta \left(\sum_{t = 1} ^ {N} \norm{\theta_{t} - \theta_{t - 1}}\right).
	\end{align} 
	
Proceeding similarly to the proof of Theorem \ref{dynamic_regret_non_smooth_general_g} completes the proof.
   \end{proof}
	
\begin{rem}\label{bound_comparsion} It is instructive to compare the dynamic regret bound for RHAPD-S with those obtained for RHGD and RHAG in \cite{li2020online}. From Theorem \ref{regret_RHAPD-S}, we have \begin{align}\nn
			\mathrm{Reg}(\text{RHAPD-S}) \leq \frac{\text{Reg}(\mathcal{I})}{\left(1 + \frac{2\mu\rho_{s}}{\beta_{s} ^ {2}}\right) ^ {W}}.
		\end{align}
		Also from \cite[Theorem 3]{li2020online}, we have the following bounds on the regret attained by RHGD and RHAG \begin{align}
			\mathrm{Reg}(\text{RHGD}) \leq Q_{f}\bigg(1 - \frac{1}{Q_{f}}\bigg) ^ {W} \text{Reg}(\mathcal{I}), 
			\quad \mathrm{Reg}(\text{RHAG}) \leq 2\bigg(1 - \frac{1}{\sqrt{Q_{f}}}\bigg) ^ {W} \text{Reg}(\mathcal{I}), \nn
		\end{align}
		where \(Q_{f} := \frac{l + 4\gamma}{\mu}\) denotes the condition number of the overall objective \(J(\x)\). For simplicity, let us set $\tau = \frac{1}{l}$ for the proposed algorithm RHAPD-S. A careful analysis of these bounds reveals that the bound of RHAPD-S is better than the bound of RHGD when \(\gamma \geq \frac{1}{4}\left(\mu+3l+\sqrt{\mu^2+14\mu l + 65l^2}\right)\),  which lies in the range $[2.76l,~3.24l]$ when \(\mu = l\). In Figures \ref{fig:gamma_25}, \ref{fig:gamma_300}, and \ref{fig:constrained_case}, we observe that the performance of RHAPD-S is better than RHGD, consistent with the bounds. However, as we decrease \(\gamma\), the bound of RHGD can turn out to be better for certain values of \(W\). This can be verified by setting \(\gamma = 0.1l, \mu = l, W \geq 1\). However, as observed in the numerical experiments detailed in Section \ref{sec:exp}, even in the small \(\gamma\) regime (where the bound of RHGD can turn out to be better than RHAPD-S), the empirical performance of RHAPD-S is better than RHGD (see Figure \ref{fig:gamma_0.1}). Compared with RHAG however, the bound of RHAG is better than that of RHAPD-S for all \(\gamma > 0\). However, the empirical performance of RHAPD-S still turns out to be better than RHAG under certain choices of \(\gamma\) (see Figures \ref{fig:gamma_0.1}, \ref{fig:ott}). Whether the bounds developed here can be tightened so as to completely explain the empirical behavior remains an open problem and will be investigated as future work. 
\end{rem}

\begin{table*}[t]
\centering
\begin{tabular}{|c|p{30em}|p{8em}|}
\hline
Parameter & Description & First Definition \\ \hline
$G$         &   Upper bound on subgradient of stage cost $f_t$    &   Assumption \ref{bounded_subgradient}  \\ \hline
$\theta_t$          &  Minimizer of $f_t$ over $\mathcal{X}$     & Equation \ref{path}    \\ \hline
$\mu$          &  Stage cost $f_t$ is $\mu_t$-strongly convex and $\mu \le \min_t \mu_t$       & Assumption \ref{non_smooth}   \\ \hline
$l_g$   &  Switching cost function $g$ is $l_g$-smooth     & Assumption  \ref{g_convex}   \\ \hline
$\gamma$       & Switching cost  \(g\) is s.t. $0 \le g(x,y) \leq \gamma/2\norm{x-y}^2$        & Assumption \ref{g_convex}   \\ \hline
$l$        &  Stage cost $f_t$ is $l_t$-smooth and $l = \max_t l_t$       & Assumption  \ref{smooth} \\ \hline        
$\tau$          &  Fixed step size      & Section \ref{apgmns}    \\ \hline
$\rho$          &  (RHAPD) Sufficient decrease constant of \(J(\x)\) &  Section \ref{sec:regret-ns}     \\ \hline
$\beta$          &  (RHAPD) Subgradient bound for $\Jt(\x)$ &  Section \ref{sec:regret-ns}      \\ \hline

$\rho_q$        & (RHAPD) Sufficient decrease constant of $J(\x)$ for quadratic $g$ &  Theorem \ref{rhapd_quadratic_regret}    \\ \hline
$\beta_q$        & (RHAPD) Subgradient bound of $\Jt(\x)$ for quadratic $g$ &  Theorem \ref{rhapd_quadratic_regret}      \\ \hline
$\rho_s$        &  (RHAPD-S) Sufficient decrease constant of $J(\x)$     &  Theorem \ref{regret_RHAPD-S}   \\ \hline
$\beta_s$        &  (RHAPD-S) Subgradient bound of $\Jt(\x)$ &   Theorem \ref{regret_RHAPD-S}       \\ \hline
$\eta$           &  OGD step size $\eta = 1/l$    &  Section \ref{sec:initialization}    \\ \hline

\end{tabular}
\label{params}
\caption{{Description of all the parameters across all the assumptions, theorems, and experiments.}}
\end{table*}

\begin{rem}\label{rem:uniformly_convex}
More generally, we can consider the case when \(f_{t}\) is uniformly convex, i.e., {$f_t$} satisfies \begin{align*}
		f_t(y) \geq f_t(x) + \ip{\partial f_t(x)}{y - x} + K \norm{y - x} ^ {p},
	\end{align*}
	for all $x, y \in \Rn^d$ and where \(p \ge 1, K > 0\). While the case of strongly convex stage costs considered here corresponds to $p = 2$, the proposed analysis in this paper can be readily extended to the case when \(f_{t}\) is $p$-uniformly convex for $p > 2$. 
\end{rem}

\section{Experiments} \label{sec:exp}
This section compares the performance of the proposed algorithms against that of the existing algorithms on tasks related to regression, trajectory tracking, and economic power dispatch. For each experiment, {we plot} the variation of the dynamic regret defined in \eqref{regret} against the lookahead window size $W$, while keeping the other parameters constant.  For the proximable stage costs case, we compare the performance of RHAPD (Algorithm \ref{RHAPD}) with that of PGD\footnote[2]{As mentioned earlier, the PGD and FISTA algorithms can be implemented in an online fashion} \eqref{prox_update} and its accelerated version FISTA. When the switching cost is quadratic, {we also} include the performance of the RHAM algorithm, which has been shown to be the special case of RHAPD in Section \ref{RHAM}. When the stage costs are smooth, we compare the performance of RHAPD-S (Algorithm \ref{RHAPD_Smooth}) with that of RHGD and RHAG from \cite{li2020online}. We also include the plots corresponding to RHAPD and FISTA whenever our smooth stage costs happen to be proximable. The step sizes used {for} various algorithms {in} the considered experiment {settings} are chosen as per Table \ref{tt_steps}.
{
\begin{itemize}
    \item For RHAPD-S, as mentioned in Theorem \ref{regret_RHAPD-S} earlier, the step size \(\tau\) is chosen such that \(\rho_{s} = \frac{1}{\tau} - \frac{l}{2} + \frac{\gamma}{2} > 0\). The choice \(\tau = \frac{1}{l}\) ensures that \(\rho_{s} > 0\) and was empirically the best-performing value across all the considered experiments.
     \item For RHAPD, as mentioned  in Theorem \ref{dynamic_regret_non_smooth_general_g}, the step size \(\tau\) is chosen such that \(\rho = \frac{\mu}{2} + \frac{1}{\tau} - l_{g} > 0\). In the special case when the switching cost is quadratic, as mentioned in Theorem \ref{rhapd_quadratic_regret}, \(\tau\) is chosen in a manner that \(\rho_{q} = \frac{\mu}{2} + \frac{1}{\tau} - \gamma > 0\). For the experiments with quadratic switching cost, the choice of \(\tau = \frac{0.8}{\gamma}\) ensures that \(\rho_{q} > 0\). Further, we found that this was the best-performing value for RHAPD across all the considered experiments with the quadratic switching cost. For the task of lasso regression with sum-squared switching cost (\textbf{E2}), we have \(g(x_{t}, x_{t - 1}) = \frac{\gamma}{2\sqrt{2}d} \ip{x_{t} - x_{t - 1}}{\mathbf{1}_{d}} ^ {2}\). It can be shown that \(g(x_{t}, x_{t - 1})\) is \(\gamma\)-smooth on \(\Rn ^ d \times \Rn ^ d\) (refer Appendix \hyperref[app:smooth]{H}), and therefore choosing \(\tau = \frac{0.8}{\gamma}\) ensures that \(\rho > 0\).
    \item For PGD and FISTA, the step sizes are chosen as per \cite{beck2009fast}.
    \item For RHGD and RHAG, the step sizes are chosen as per \cite{li2020online}.
\end{itemize}
}

\begin{table}[htb] 
	\centering
	\normalsize
	\begin{tabular}{|c|c|} \hline
		Algorithm 	& Parameters \\\hline
		RHAPD-S		& $\tau = 1/l$   \\\hline
		RHAPD 		& $\tau = 0.8/\gamma$ \\\hline
		PGD 		& $\tau = 1/L_{\nabla H}$ \\\hline
		FISTA   	& $\tau = 1/L_{\nabla H}$\\\hline
		RHGD 	&  $\eta_G = 1/L_{\nabla J}$ \\\hline
		RHAG &	$\lambda = \frac{\sqrt{L_{\nabla J}} - \sqrt{\mu}}{\sqrt{L_{\nabla J}} + \sqrt{\mu}}$ \\\hline 
	\end{tabular}
	\caption{Various step sizes used in the experiments. See \eqref{FHdef} for the definition of $H$. {Note that \(L_{\nabla H}, L_{\nabla J}\) denote the Lipschitz-smoothness constants for \(H\) and \(J\) respectively.} }\label{tt_steps}
\end{table} 

For all experiments, the {plots also} show the performance of MPC, where at time $t$, we solve the $W$-stage optimization problem
\begin{align}\label{MPC} 
\{x_j\^t\}_{j=t}^{t + W - 1} \!=\hspace{-5mm} \argmin_{x_t \ldots, x_{t + W - 1} \in \cX} \hspace{-1mm}\sum_{\tau = t} ^ {t + W - 1}\hspace{-2mm} f_{\tau}(x_{\tau}) + g(x_{\tau}, x_{\tau - 1}),
\end{align}
and take the action corresponding to $x_t\^t$. While the computational cost of solving \eqref{MPC} is much higher than that of the first-order methods, the performance of MPC serves as a benchmark in various settings. {

\begin{figure*}[!tbp]
	\begin{minipage}{0.32\textwidth}
		\centering
	  \includegraphics[width=\linewidth]{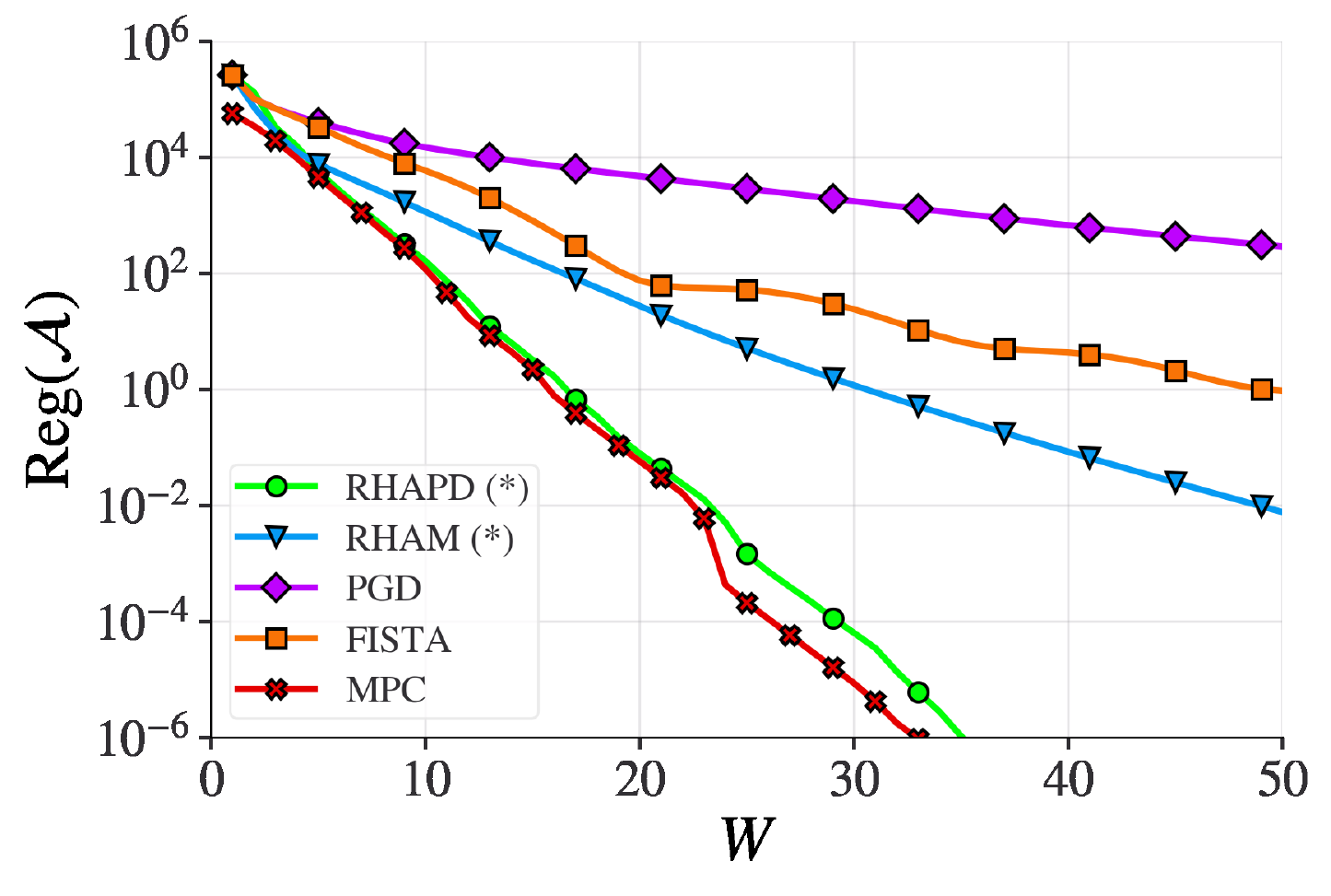}
	  \caption{Results for \textbf{E1}}
	  \label{fig:non_smooth_lasso_quad}
	\end{minipage}
	\begin{minipage}{0.32\textwidth}
		\centering
		\includegraphics[width=\linewidth]{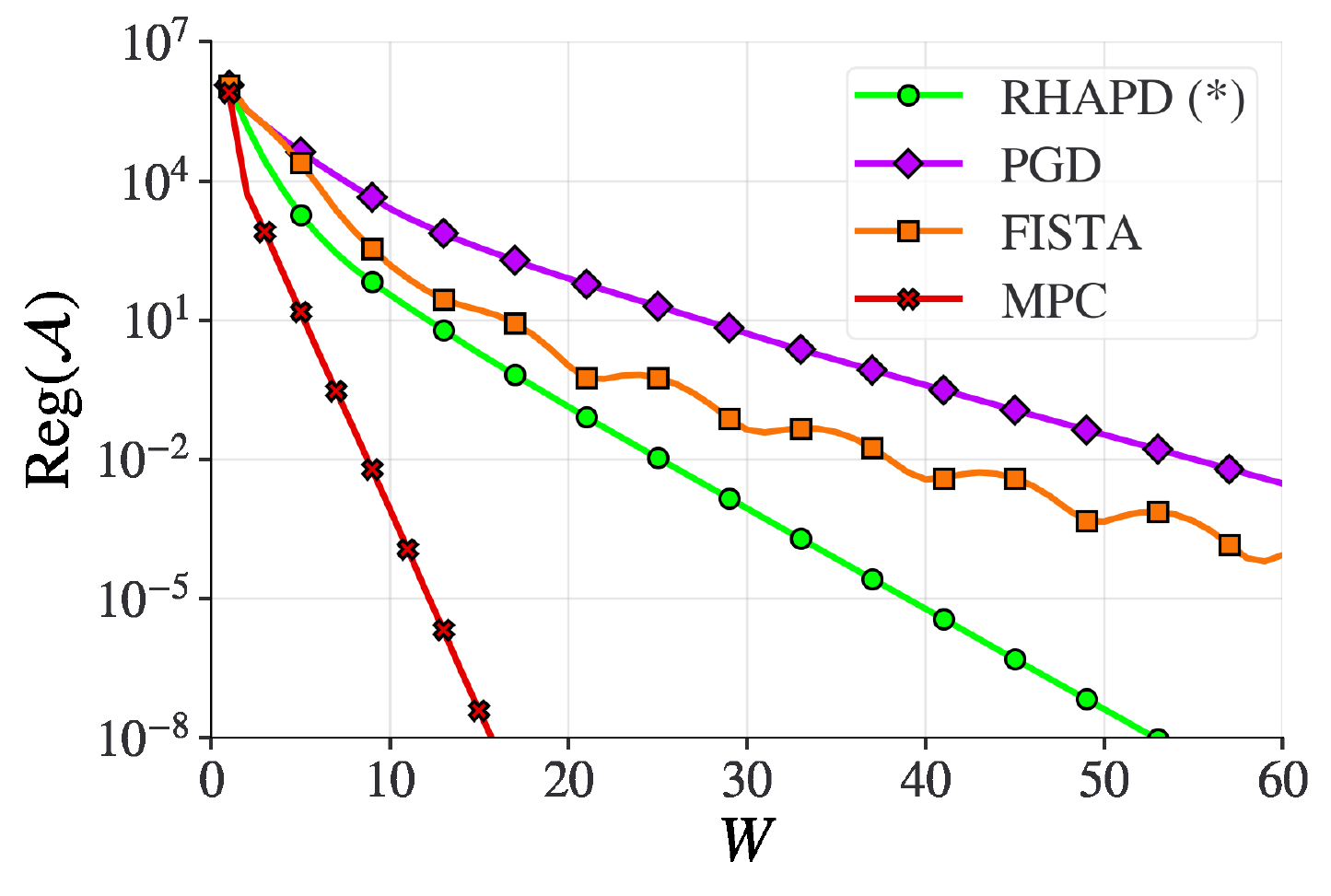}
		\caption{Results for \textbf{E2}}
		\label{fig:non_smooth_lasso_non_quad}
	\end{minipage}
	\begin{minipage}{0.32\textwidth}
		\centering
		\includegraphics[width=\linewidth]{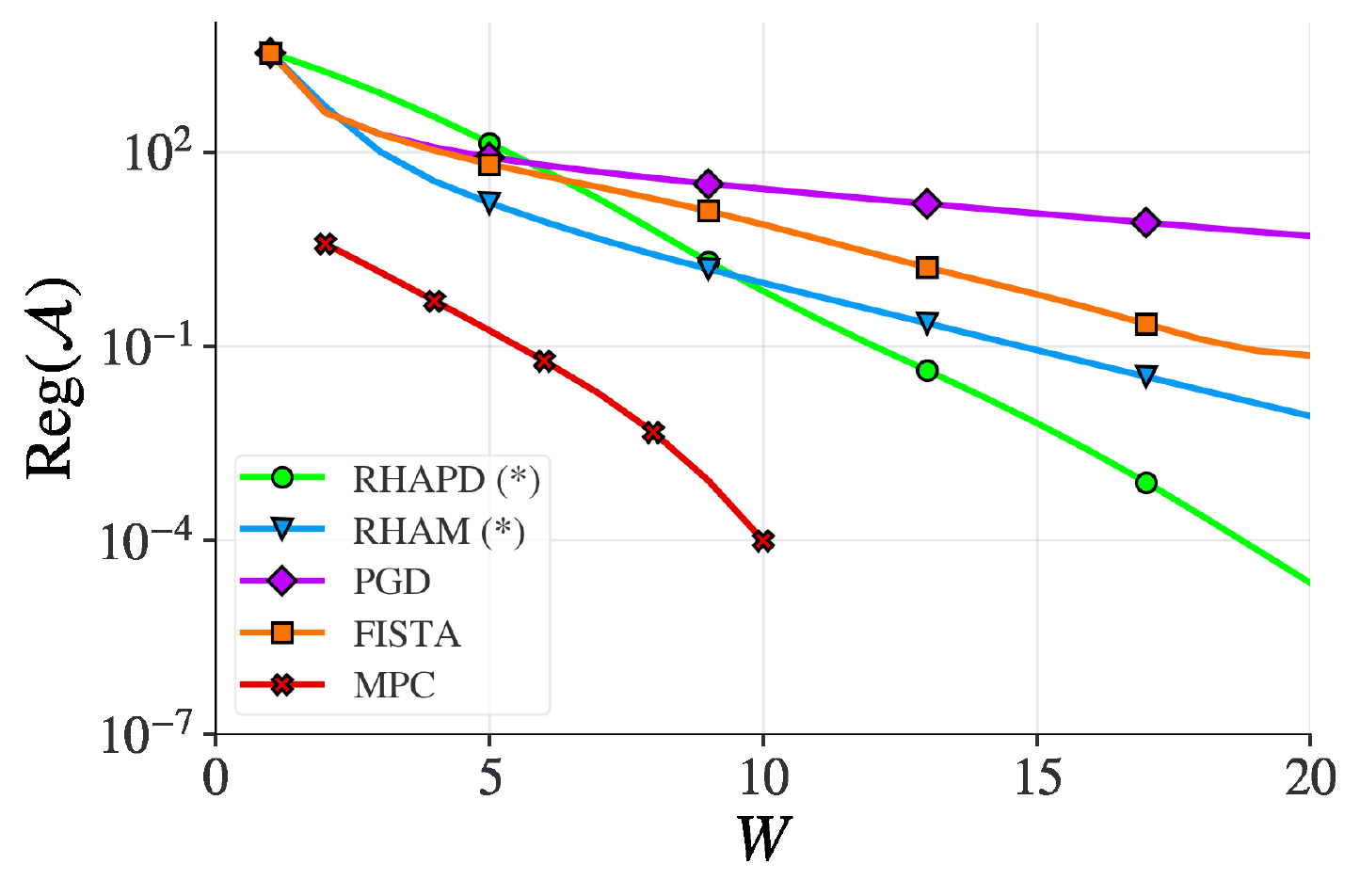}
		\caption{Results for \textbf{E3}}
		\label{fig:overlapping_group_lasso}
	\end{minipage}
\end{figure*}

\begin{table*}[tb]
	\centering
	\begin{tabular}{|c|c|c|c|c|c|c|c|c|c|c|}
	\hline
	Exp 	& $d$ 	& $N$ 	& $\gamma$ 	& $\cX$ 	& $x_0$ 	& $\sigma$ 	& Objective 	& $L_{\nabla H}$ & Other Parameters \\ \hline
	\textbf{E1}       	& $1$  	& $100$  	& $10$     	& $[-10^{5}, 10^5]$	& 0	& 1000			& \eqref{eq:problem_E1} & $4\gamma$ & $M = 60$, $\lambda=50$               \\ \hline
	\textbf{E2}      	& $2$  	& $100$  	& $2$        	&  $[-100, 100] \times [-10, 10]$ 	&  $\mathbf{0}$  		& 100			& \eqref{eq:problem_E2} & $2\sqrt{2}\gamma$ &$M=1$, $\lambda = 1$         \\ \hline
	\textbf{E3}        	& $50$  	&  $30$ 	& $10$         	&  ${[-10^{7}, 10^{7}]}$			&  $\mathbf{0}$  		& $1$			&  \eqref{eq:problem_E3} & $4\gamma$ &$G_1 \ldots G_9$\\ \hline
	\end{tabular}
	\caption{All parameters used in Experiments for Section \ref{reg_exp}. For \textbf{E1} and \textbf{E3}, \(L_{\nabla H} = 4\gamma\) follows from \cite[Lemma 1]{li2020online}. For the computation of \(L_{\nabla H}\) for \textbf{E2}, refer Appendix \hyperref[app:overall_smooth]{I}.}\label{table_reg}
	\end{table*}
For some of the numerical experiments, we also report the overall runtimes of all the algorithms averaged across 5 runs for a fixed prediction window size $W=10$. Note that the overall runtime of MPC depends on the optimization algorithm used to solve \eqref{MPC} and is problem specific. Therefore, for simplicity, we only consider reporting the runtimes for the problem of lasso regression with quadratic switching cost (\textbf{E1}), in which case we use the alternating minimization algorithm (Section \ref{RHAM}) to solve \eqref{MPC}, demonstrating that the proposed algorithms are much more computationally inexpensive. Further, we report the runtimes for the problem of trajectory tracking (\textbf{E2}) as well, demonstrating that the proposed algorithms offer better performance than RHGD and RHAG at similar runtime costs.
All the experiments were conducted on {an Ubuntu 20.04 machine with an Intel Core i5-9300H CPU.} 

\subsection{Lasso Regression} \label{reg_exp}
We consider a generic setting where an agent seeks to solve a series of regression tasks $\{T_t\}_{t=1}^N$ in an online fashion, while also minimizing deviations in its state $x_t$. We begin by considering the following two tasks. 

\noindent\textbf{E1}. \emph{Lasso Regression with quadratic switching cost. }
\begin{align}\label{eq:problem_E1} \tag{$\mathcal{P}_{E1}$}
	f_t(x_t) = \frac{1}{M} \sum_{j = 1}^M \norm{x_t - u\^j}^2 + \frac{\lambda}{2} \norm{x_t}_1, \quad 
g(x_t, x_{t - 1}) = \frac{\gamma}{2} \norm{x_t - x_{t - 1}}^2,
\end{align}
where $u\^j \sim \cN(\mathbf{0}, \sigma^2 \I)$ for $1 \leq j \leq M$. We take $N = 100$, $M = 60$, $d = 1$, $\sigma = 1000$, $\lambda = 50$, $\gamma = 10$, and $x_0 = 0$. The feasible set $\cX$ is taken to be $[-10^5, 10^5]$.

\noindent\textbf{E2}. \emph{Lasso Regression with sum-squared switching cost. }
\begin{align}\label{eq:problem_E2}\tag{$\mathcal{P}_{E2}$} 
	f_t(x_t) = \frac{1}{M} \sum_{j = 1}^M \norm{x_t - u\^j}^{2} + \frac{\lambda}{2} \norm{x_t}_1, \quad
	g(x_t, x_{t - 1}) = \frac{\gamma}{2\sqrt{2}d} \ip{x_t - x_{t - 1}}{\mathbf{1}}^2,
\end{align}
where $u\^j \sim \cN(\mathbf{0}, \sigma^2 \I)$ for $1 \leq j \leq M$, and we take $M=1$, $d=2$, $\sigma=100$,  $\lambda=1$, $\gamma=2\sqrt{2}$, $N = 100$, and $x_0 = \mathbf{0}$. The feasible set \(\cX = [-100, 100] \times [-10, 10]\) is a rectangle in \(\Rn^2\). The switching cost function in \eqref{eq:problem_E2} penalizes changes in $\ip{\mathbf{1}}{x_t}$ over consecutive time steps but allows changes in individual elements of $x_t$. From the Cauchy-Schwarz inequality, it can be seen that $g$ satisfies Assumption \ref{g_convex}. 

The results of experiments \textbf{E1} and \textbf{E2} are presented in Figure \ref{fig:non_smooth_lasso_quad} and \ref{fig:non_smooth_lasso_non_quad}, respectively. In \textbf{E2}, since the switching cost is not quadratic, we do not include the performance of RHAM. In both \textbf{E1} and \textbf{E2}, the proposed RHAPD performs better than PGD and its accelerated variant. Interestingly, the performance of RHAPD in Figure \ref{fig:non_smooth_lasso_quad} is close to that of MPC, suggesting its superior performance for this case. {Table \ref{runtimes:lasso} reports the runtime of all the algorithms for \textbf{E1}. As can be observed from Table \ref{runtimes:lasso}, RHAPD provides improved performance than PGD and FISTA at {a similar computational expense}. Further, the performance of RHAPD is comparable to MPC while the runtime of RHAPD is significantly lower than that of MPC. However, we remark that these runtimes are implementation dependent and may not generalize to other settings. }


\begin{table}[htb] 
	\centering
	\normalsize
	\begin{tabular}{|c|c|} \hline
		Algorithm 	& Runtime (ms) \\\hline
		\textbf{RHAPD}		& \textbf{44} \\\hline
		\textbf{RHAM} 		& \textbf{44} \\\hline
		PGD 		& 39 \\\hline
		FISTA   	& 44 \\\hline
		MPC 	&  13476 \\\hline
	\end{tabular}
	\caption{{Runtime (rounded to the nearest integer) for algorithms in \textbf{E1} for $W=10$, averaged over 5 runs.}}\label{runtimes:lasso}
\end{table} 

We next consider a more complicated setting where the proximal cannot be calculated readily. \\
\noindent\textbf{E3}. \emph{Overlapping Group Lasso Regression with quadratic switching cost }
   \begin{align}\label{eq:problem_E3}\tag{$\mathcal{P}_{E3}$}
	f_t(x) = \frac{\norm{x - u}^{2}}{2} + \sum_{i = 1} ^{g} \norm{x_{G_i}}_{2}, \quad
	g(x_t, x_{t - 1}) = \frac{\gamma}{2} \norm{x_t - x_{t - 1}}^{2}.
\end{align}
The formulation in \eqref{eq:problem_E3} is borrowed from that in \cite{yuan2011efficient}. We take $d=50$ and $G_i = \{5i-4, \ldots, 5i+5\}$ for $1\leq i \leq 9$, so that groups $G_j$ and $G_{j+1}$ have 5 overlapping indices for $1 \leq j \leq 8$. The feasible \(\mathcal{X}\) is taken to be a large interval \({\mathcal{X} = [-10 ^ {7}, 10 ^ {7}]}\). As earlier, we take $u\sim \cN(\mathbf{0}, \I)$, with $N = 30$ and $\gamma = 10$. Here, since the {$\prox$} cannot be obtained in closed form, we solve it using the Alternating Directions Method of Multipliers (ADMM) method, as implemented in \cite{yuan2011efficient}. The ADMM algorithm is terminated when the norm of the residual of both the primal and dual feasibility is below $10^{-10}$.

The results for experiment \textbf{E3} are presented in Figure \ref{fig:overlapping_group_lasso}. As observed in the earlier experiments, the proposed algorithm RHAPD and its variant RHAM outperform PGD and FISTA. While the need to run ADMM at each iteration makes all the algorithms computationally intensive in this case, we still found them to be significantly faster than MPC. 

Table \ref{table_reg} summarizes all the parameters used in the experiments for Section \ref{reg_exp}.
 
\begin{figure*}[tb]
	\begin{minipage}{0.32\textwidth}
		\centering
		\includegraphics[width=\linewidth]{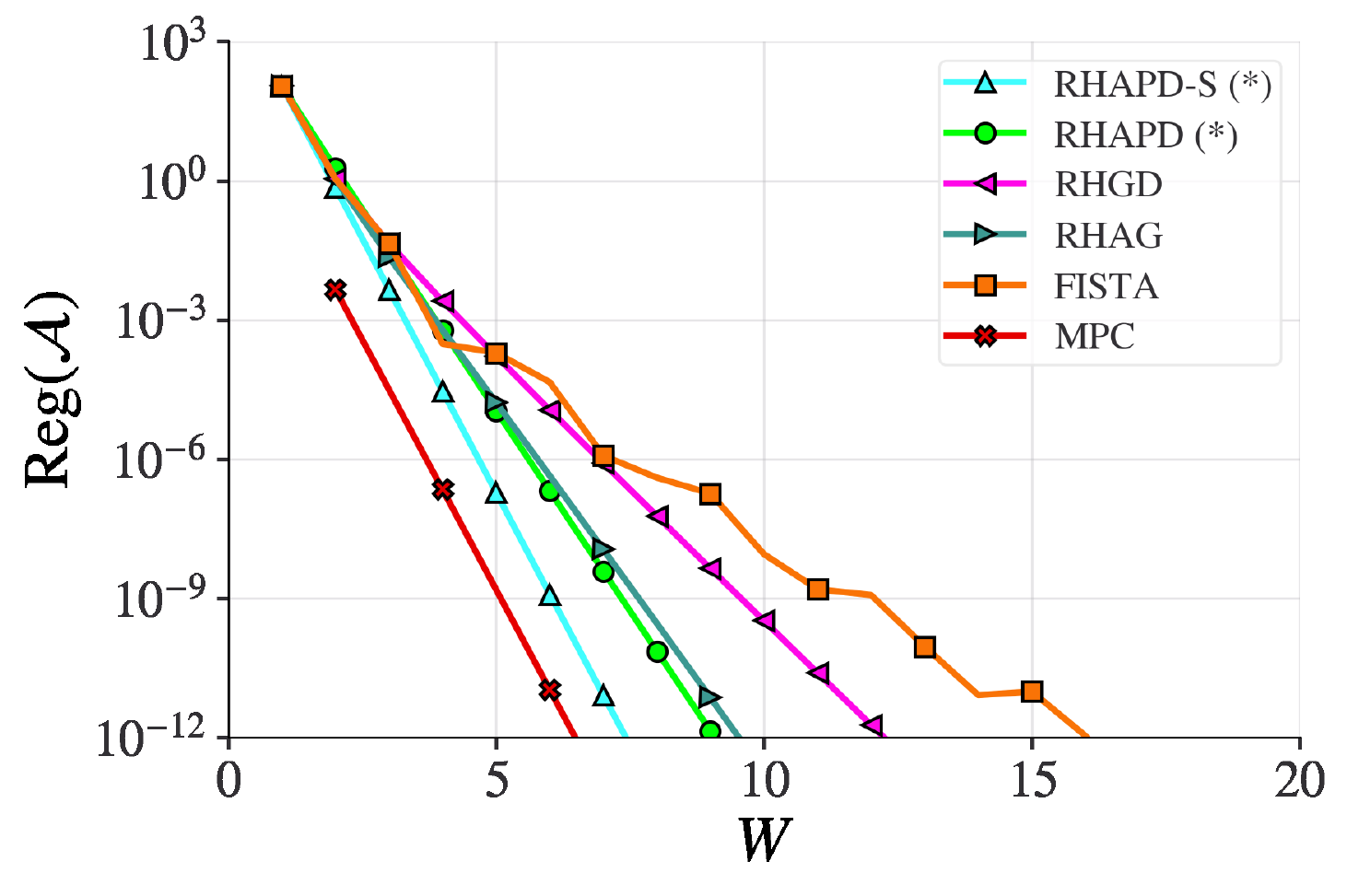}
		\caption{Results for \textbf{E4}, $\gamma = 0.1$}
		\label{fig:gamma_0.1}
	\end{minipage}
	\begin{minipage}{0.32\textwidth}
		\centering
		\includegraphics[width=\linewidth]{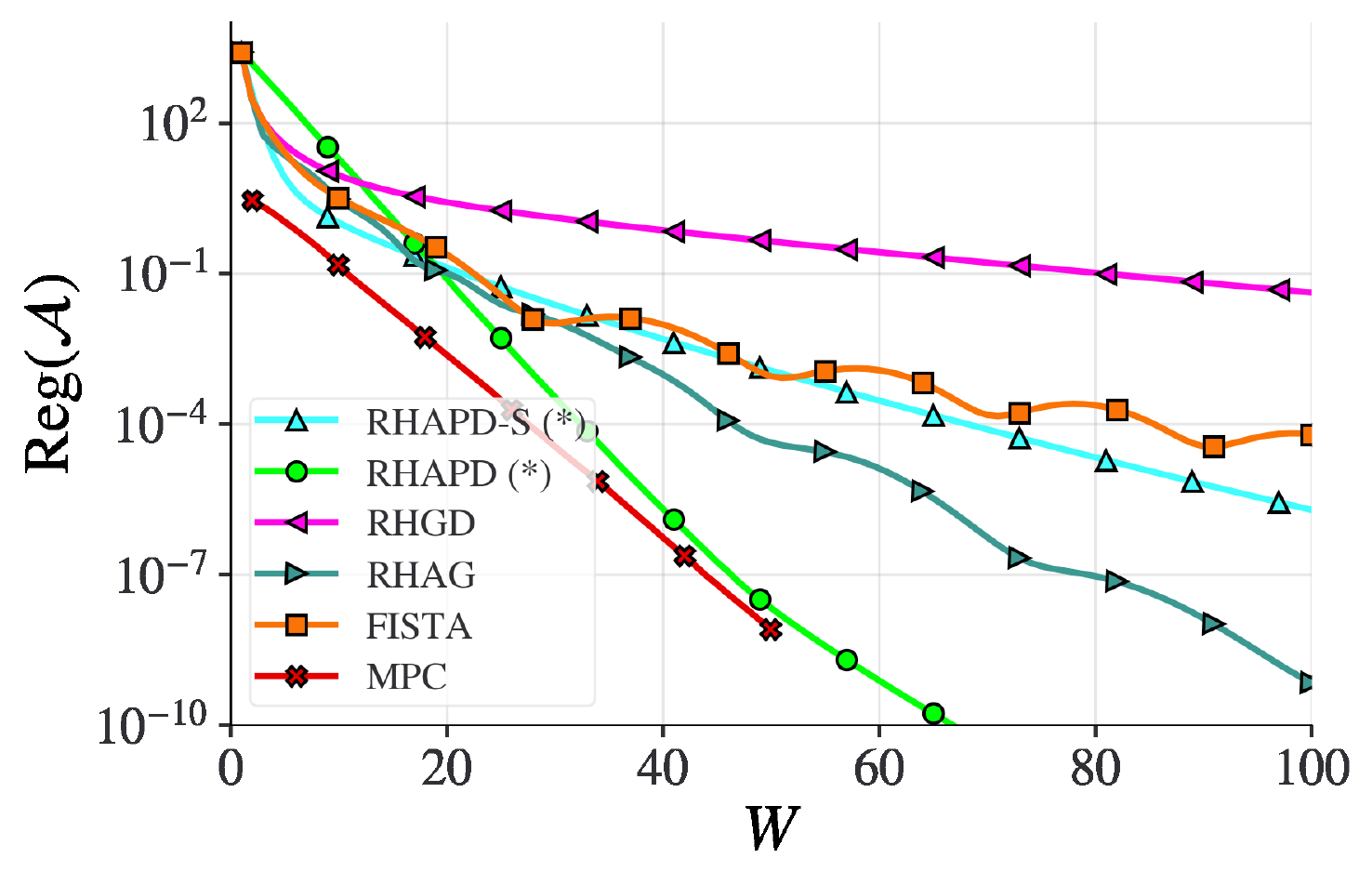}
		\caption{Results for \textbf{E4}, $\gamma = 25$}
		\label{fig:gamma_25}
	\end{minipage}
	\begin{minipage}{0.32\textwidth}
		\centering
		\includegraphics[width=\linewidth]{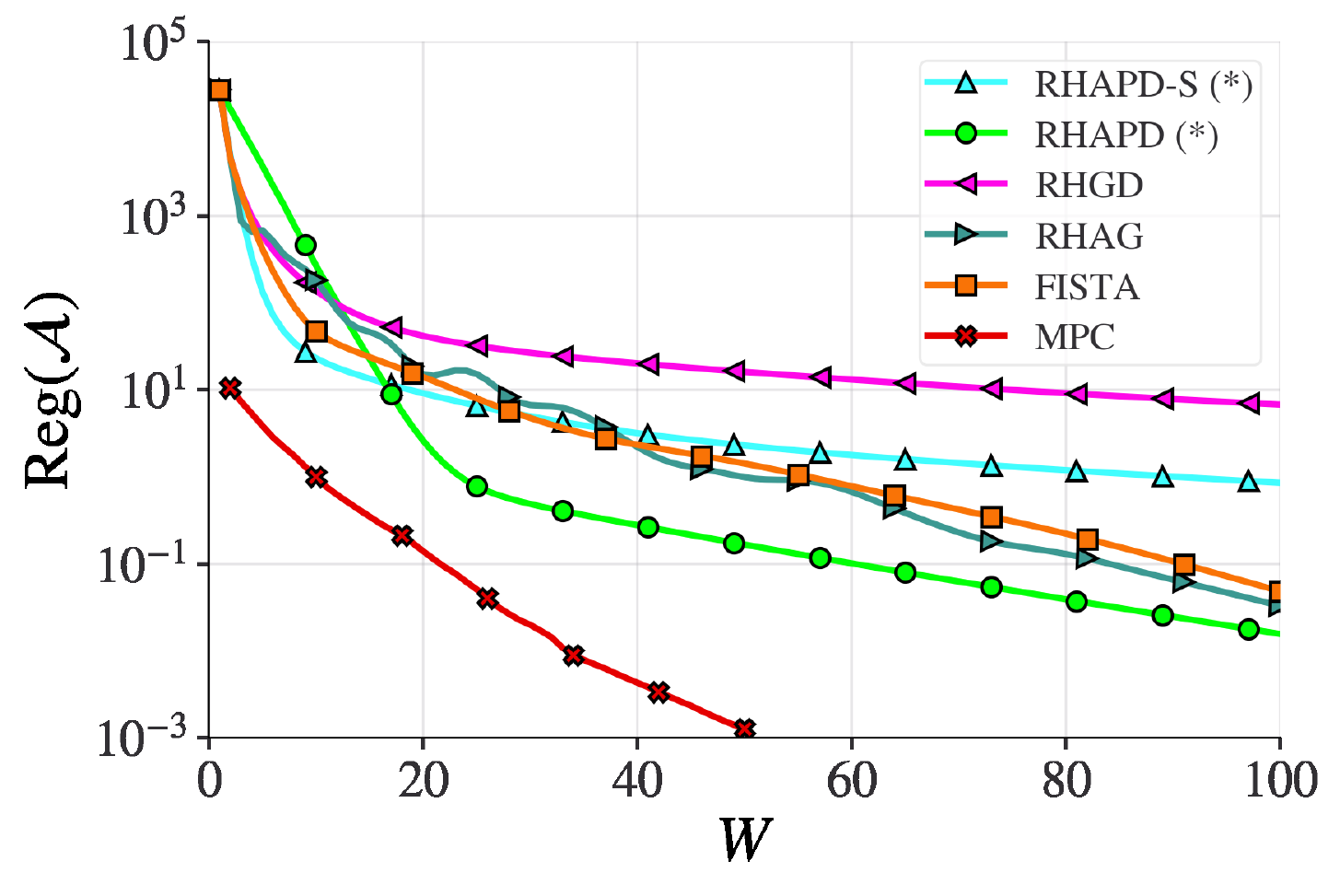}
		\caption{Results for \textbf{E4}, $\gamma = 300$}
		\label{fig:gamma_300}
	\end{minipage}
\end{figure*}

\subsection{Trajectory Tracking}\label{sec:tt}
We now consider the trajectory tracking problem, where a robot is required to track a target whose position at time $t$ is denoted by $u_t$. The variable $x_t$ captures the robot location and the stage cost is the squared distance from the target. A quadratic switching cost is chosen so as to minimize the energy consumption of the robot. With these choices, we formulate \eqref{eq:problem} with
\begin{align}
	f_t(x_t) = \frac{1}{2}\norm{x_t - u_t}^{2}, \quad
	g(x_t, x_{t - 1}) = \frac{\gamma}{2} \norm{x_t - x_{t - 1}}^{2}. \nonumber
\end{align}
The smooth stage costs are chosen to allow us to compare the proposed RHAPD-S (Algorithm \ref{RHAPD_Smooth}) with related algorithms such as RHGD and RHAG from \cite{li2020online}, as well as MPC. As the stage costs are also proximable, we also plot the performance of RHAPD (Algorithm \ref{RHAPD}) and FISTA.

For the sake of consistency, we initialize all algorithms using the OGD update rule with learning rate $\eta = 1/l$ as described in Section \ref{smoothOCO}. For experiment \textbf{E5} however, we set \(\eta = 0.4 \le \frac{1}{l}\), similar to \cite{li2020online}. The step sizes are chosen as in Table  \ref{tt_steps}, with $L_{\nabla H} = 4\gamma, L_{\nabla J} = l + 4\gamma$. We begin by considering the following specific problem.

\noindent \textbf{E4}. \emph{One-dimensional with large interval}. We take $d=1$ and the feasible region as the large interval $\cX = [-10^6, 10^6]$. Further, we take $x_0 \in \cN(0,1)$, $N = 100$, and $u_t \sim \cN(0,1)$ for $1 \leq t \leq N$. We compare the performance of various algorithms for $\gamma = 0.1$ (Figure \ref{fig:gamma_0.1}), $\gamma = 25$ (Figure \ref{fig:gamma_25}), and $\gamma = 300$ (Figure \ref{fig:gamma_300}).

We observe that for all three values of the considered \(\gamma\), either RHAPD or RHAPD-S outperforms all the other algorithms. For small $\gamma$, which allows for better tracking of the target, the performance of RHAPD-S is the best, in contrast to the order of dynamic regret bounds as observed in Remark \ref{bound_comparsion}. On the other hand, for a larger value of $\gamma$, which yields a more energy-efficient trajectory, the performance of RHAPD is the best, followed by that of RHAG and RHAPD-S. In the extreme case of $\gamma = 300$, where the robot stays close to the starting location for the duration of the experiment, the performance of RHAPD is still the best, followed by that of RHAG and FISTA. As $\gamma$ increases, the step size $\eta_G$ (Table \ref{tt_steps}) for RHGD decreases, and the momentum parameter $\lambda$ (Table \ref{tt_steps}) for RHAG increases, due to which we observe degradation in the performance of RHGD and improvement in the performance of RHAG. {This} experiment demonstrates that the problem parameters greatly influence the relative performance of various algorithms. 

{Table \ref{runtimes:track} highlights the low computational cost of the proposed algorithms for this particular setting, where the proximal has a closed form and is hence easy to compute. With almost similar runtime as RHAG, the proposed algorithm RHAPD is able to perform much better. Similarly, RHAPD-S performs better than RHGD despite having a similar runtime. The runtime of MPC was not included here as it was found to be several orders of magnitude higher than that of the other algorithms and varied greatly across different implementations. The higher runtime of MPC is also expected since it entails solving a \(W\)-stage optimization \(\mathcal{O}(N - W)\) times \cite{li2020online}.}

\begin{table}[!htb] 
	\centering
	\begin{tabular}{|c|c|} \hline
		Algorithm 	& Runtime (ms) \\\hline
		\textbf{RHAPD-S}		& \textbf{0.20} \\\hline
		\textbf{RHAPD} 		& \textbf{1.58} \\\hline
		RHGD 		& 2.95 \\\hline
		RHAG   	& 2.95 \\\hline
		FISTA 	&  3.74 \\\hline
	\end{tabular}
	\caption{Runtime for algorithms in \textbf{E4} ($\gamma = 25$) for $W=10$, averaged over 5 runs.}
    \label{runtimes:track}
\end{table}

We next consider a special example from \cite{li2020online} which is designed so as to extract the best possible performance of RHAG and is motivated from the lower bound analysis there. \cite{li2020online} propose this example to demonstrate that RHAG performs better than MPC (for \(W \leq \frac{N}{2}\)) at least in some special cases. 

\noindent \textbf{E5}. \emph{One-dimensional with small interval}. We take $d=1$ with $\cX = [0,6]$. Further, we set $N = 20$, $\gamma = 20$, and $\u = [6, 0, 6, 0, 6, 6, 0, 6, 6, 0, 6, 6,0, 6, 6, 6, 6, 6, 6, 6]$.

Results for \textbf{E5} are presented in Figure \ref{fig:constrained_case}, where it can be seen that RHAG performs slightly better than MPC for $W \leq 10$, as also shown in the experiments in \cite{li2020online}. We observe that for $W \leq 10$, RHAPD-S performs the best of all the algorithms while the performance of RHAPD is the worst. For this specific case, we also observe that for large values of $W$, the performance of RHAPD improves and is better than that of RHAPD-S, while the performance of RHAG degrades. However, this specific observation seems to be a property of the sequence constructed for this experiment and does not generalize to the other experiments.

\begin{figure*}[htb]
	\begin{minipage}{0.32\textwidth}
		\centering
		\includegraphics[width=\linewidth]{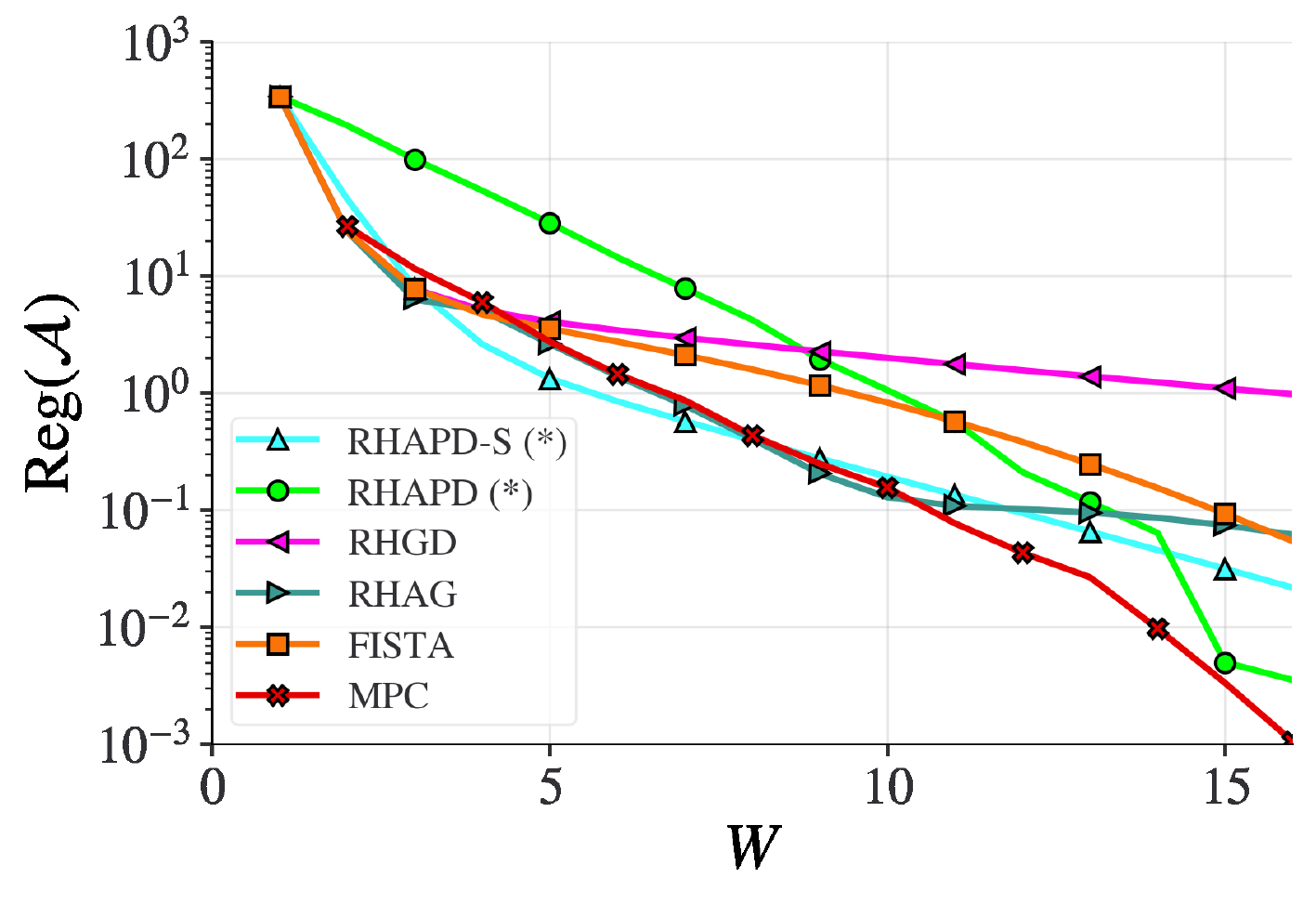}
		\caption{Results for \textbf{E5}}
		\label{fig:constrained_case}
	\end{minipage}
	\begin{minipage}{0.32\textwidth}
		\centering
		\includegraphics[width=\linewidth]{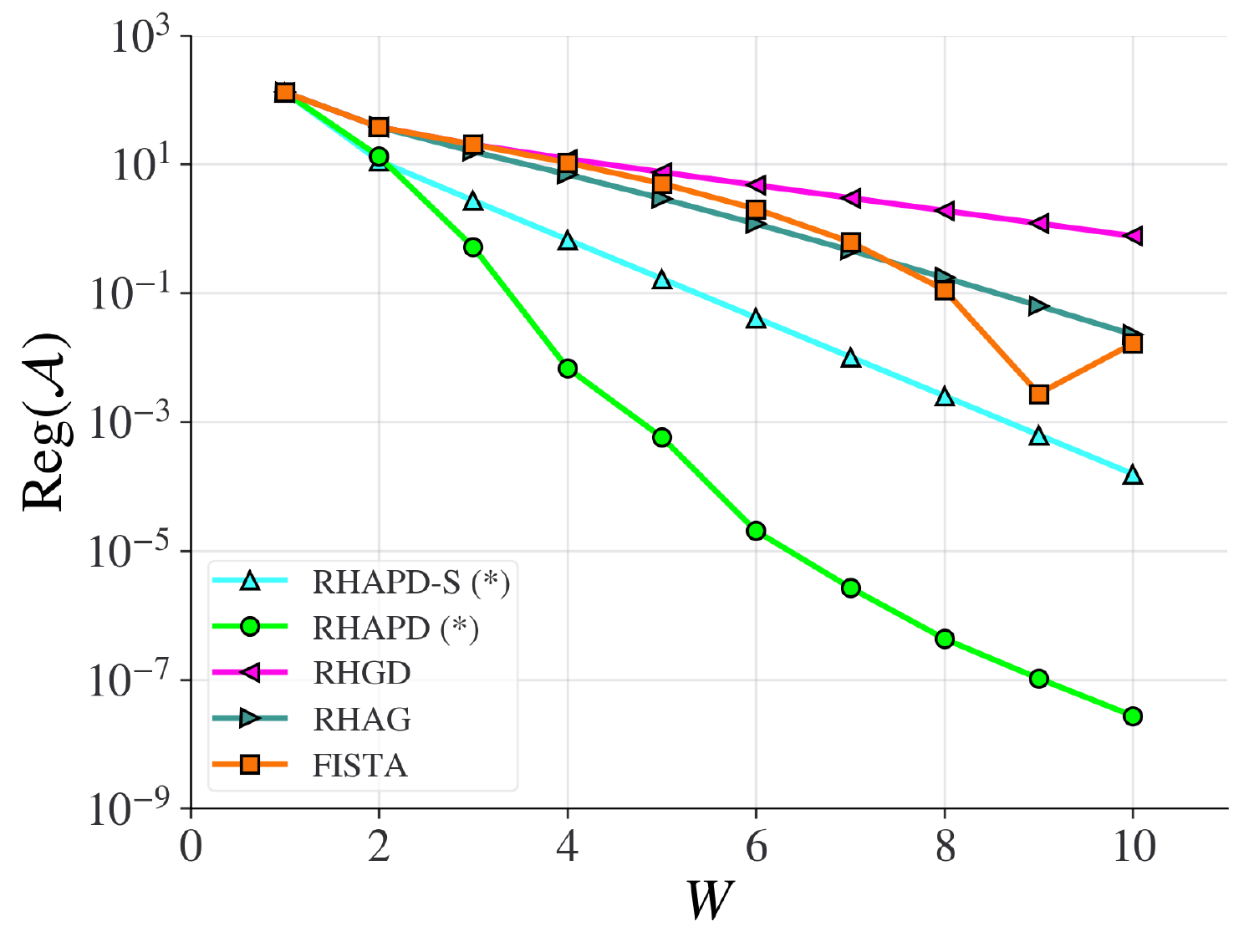}
		\caption{Results for \textbf{E6}}
		\label{fig:ott}
	\end{minipage}
	\begin{minipage}{0.32\textwidth}
		\centering
		\includegraphics[width=0.8\linewidth]{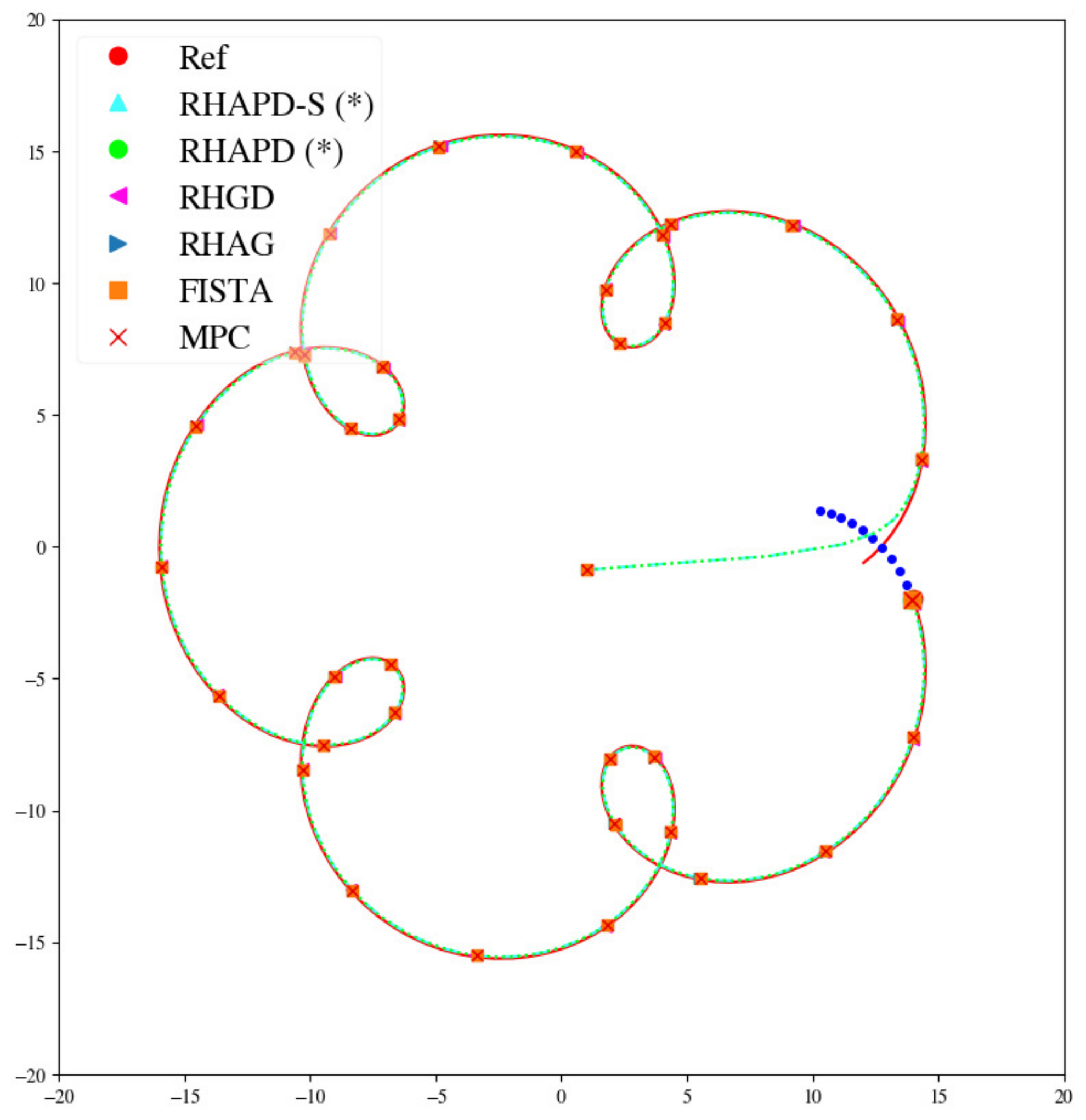}
		\caption{The trajectory tracked for \textbf{E6}}
		\label{fig:ott_curve}
	\end{minipage}
\end{figure*}

\begin{table*}[b]
	\centering
	\hspace*{-1.5cm}  
	\begin{tabular}{|c|c|c|c|c|c|c|c|c|c|c|c|}
		\hline
		Exp & $d$ & $N$ & $\gamma$ & $\cX$ & $u_t$ & $x_0$ & $\eta_{OGD}$ & $L_{\nabla H}$ &  $l = \mu$ & $L_{\nabla J}$ \\ \hline
		\textbf{E4}  & $1$  & $100$  & $\in \{0.1, 25, 300\}$ & $[-10 ^ {6}, 10 ^ {6}]$ & $\sim \mathcal{N}(0,1)$ & $\sim \mathcal{N}(0,1)$ & $1/l$ & $4\gamma$ &  $1$   & $l+4\gamma$  \\ \hline
		\textbf{E5}  & $1$  & $20$  & $20$ & $[0,6]$  &  as in \textbf{E5} & $0$ & $0.4$ & $4\gamma$      &  $1$   & $l+4\gamma$ \\ \hline
		\textbf{E6}  & $2$  & $300$  &  $1$        & $[-10^{6}, 10^{6}] \times [-10^{6}, 10^{6}]$      & as in \textbf{E6}         & $x_0^{(r)}, y_0^{(r)} \sim \mathcal{N}(0,1)$ & $1/l$ & $4\gamma$      &  $1$    & $l+4\gamma$  \\ \hline
	\end{tabular}
	\caption{Parameters for experiments in Section \ref{sec:tt}} \label{table_tt}
\end{table*}

Finally, we implement the proposed algorithms for tracking a well-defined reference trajectory in two dimensions.

\noindent \textbf{E6} \label{E6} \emph{Tracking target at $u_t = (x_{t} ^ {(r)}, y_{t} ^ {(r)}) = \big(12 \cos (t - 6) - 4 \cos(6(t - 6)), 12 \sin(t - 6) - 4 \cos(6(t - 6))\big)$ for $N = 300$, $W = 10$, and $\gamma = 1$}. 

The starting position is chosen as $u_0 \sim \cN(\mathbf{0}, \I)$, and {we assume} the robot is allowed to move freely within a large interval \(\mathcal{X} = [-10^{6}, 10^{6}] \times [-10^{6}, 10^{6}]\). The reference trajectory as well as the curves traced by the trackers are depicted in Figure \ref{fig:ott_curve}.  The red dot denotes the target $u_t$, while the tracker positions have their respective labels. The set of blue dots denotes the lookahead window $\{u_i\}_{i=t}^{t+W-1}$ available to the trackers. The dynamic regret achieved by the trackers is presented in Figure \ref{fig:ott}. In this case, we observe that both, RHAPD and RHAPD-S outperform RHAG and RHGD. Since the experiment is performed for a fixed $W$, the regret achieved by the MPC tracker, which is the least of all trackers, is not depicted in the figure. {Additionally, given} 
\(N = 300\), we found it quite computationally intensive to plot the variation of the regret vs \(W\) for MPC.

The animation for the experiment can be found \href{https://drive.google.com/file/d/1LKRpVNeCV7TS2cst6bZU_4Z0Hg3okYIw/view?usp=share_link}{here}. We note that all the compared trackers are able to track the trajectory mostly accurately, but there are clear differences in the performance when viewed closely enough, as is visible from the zoomed-in view in the bottom right corner of the animation. Table \ref{table_tt} summarizes the parameters used for the experiments in Section \ref{sec:tt}.

\subsection{Economic Power Dispatch} \label{eco_ds}
{
In this section, we demonstrate the efficacy of the proposed algorithms on real-life data. We consider the problem of economic power dispatch as formulated in \cite{li2018using}. In this problem, given a set of $k$ generators with outputs $(x_{t, 1}, \dots,  x_{t, k})$ at time $t$, the objective is to minimize the total generation cost $\sum_{i=1}^k c_i(x_{t,i})$ while also fulfilling the gap between the time-varying power demand $d_t$ and the renewable energy supply $s_t$. Although there may be random fluctuations in both demand and supply, forecasts and predictions based on past years' data are generally available for short time windows \cite{prediction_website_1, prediction_website_2}. Thus in our formulation of \eqref{eq:problem}, we have the stage cost function as the generation cost with an imbalance penalty, given by
}
\begin{align*}
    {f_t(x_t) = \sum_{i=1}^k c_i(x_{t,i}) + \xi_t \bigg(\sum_{i=1}^k x_{t,i} + s_t - d_t\bigg)^2,}
\end{align*}{
where $\xi_t > 0$ is a parameter that determines the cost incurred for not tracking the demand-supply gap adequately. In the context of power systems, significant operation costs are encountered when switching the power output of generators -- these are termed as ramp costs, and following the existing literature \cite{mookherjee2008dynamic, tanaka2006real, li2018using}, we model these switching costs as a quadratic function $g(x_t, x_{t-1}) = \frac{\gamma}{2}\norm{x_t - x_{t-1}}^2$. With this, we seek to minimize the total generation cost, including the imbalance penalty and the ramp costs over $N$ timesteps.
}

{
\noindent\textbf{E7.} In the interest of replicability, we consider the same data as in \cite{li2018using}. We are given $k=3$ generators, with quadratic generator cost functions}
\begin{align*}
    {c_{1}(x_{t, 1})} &= {x_{t, 1} ^ 2 + 15 x_{t, 1} + 10,} \\
    {c_{2}(x_{t, 2})} &= {1.2 x_{t, 2} ^ 2 + 10 x_{t, 2} + 27,}  \\
    {c_{3}(x_{t, 3})} &= {1.4 x_{t, 3} ^ 2 + 6 x_{t, 3} + 21.}
\end{align*}
\begin{figure*}[htb]
	\begin{minipage}{0.32\textwidth}
		\centering
		\includegraphics[width=\linewidth]{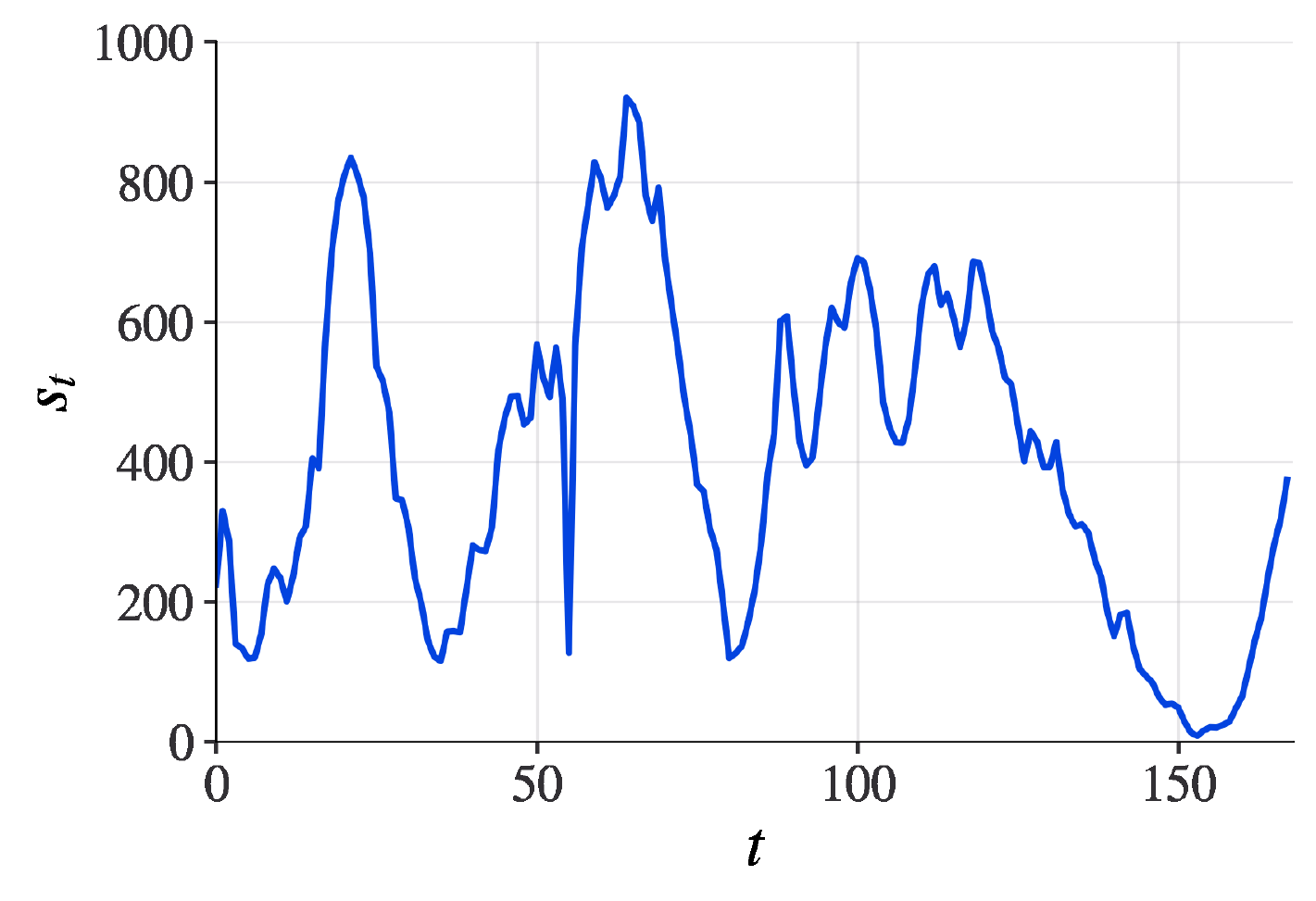}
		\caption{Wind power supply profile, $s_t$}
		\label{fig:pow_dem}
	\end{minipage}
	\begin{minipage}{0.32\textwidth}
		\centering
		\includegraphics[width=\linewidth]{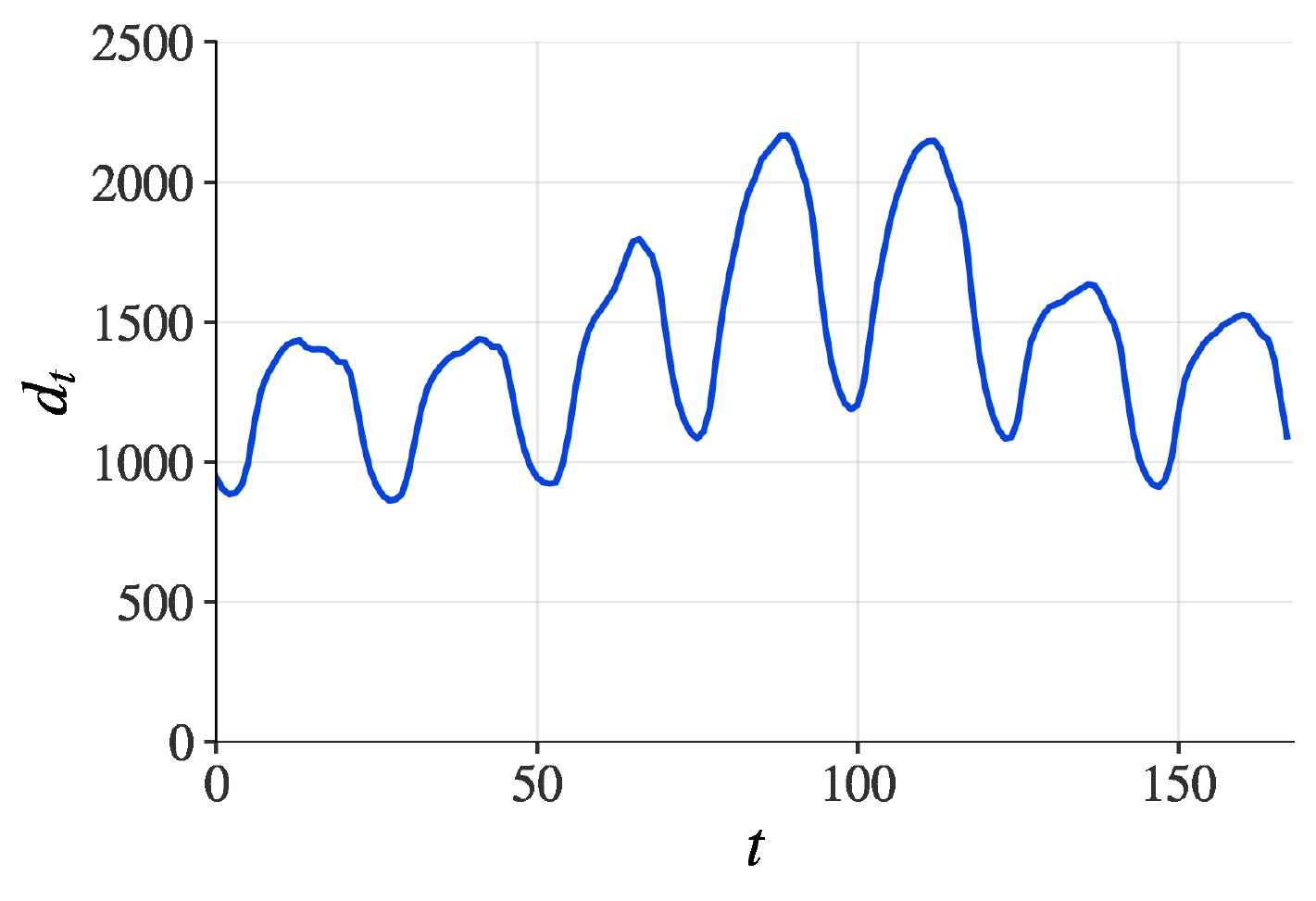}
		\caption{Demand profile, $d_t$}
		\label{fig:pow_sup}
	\end{minipage}
	\begin{minipage}{0.32\textwidth}
		\centering
		\includegraphics[width=\linewidth]{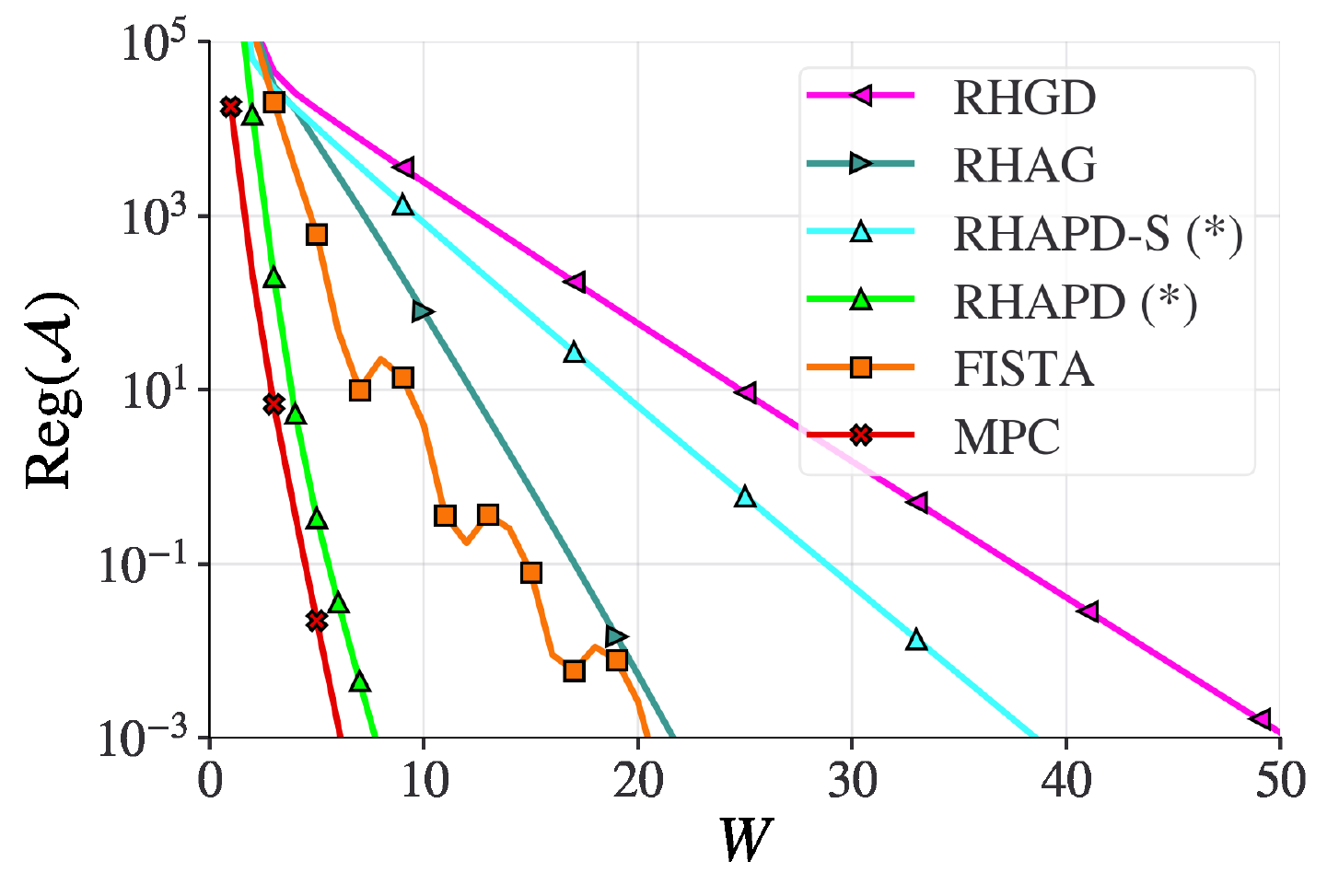}
		\caption{Results for \textbf{E7}}
		\label{fig:pow_res}
	\end{minipage}
\end{figure*}
 \begin{table*}[htb]
	\centering
	\begin{tabular}{|c|c|c|c|c|c|c|c|c|c|c|c|}
	\hline
	Exp & $d$ & $N$ & $\gamma$ & $\cX$ & $\xi_t$ & $x_0$ & $\eta_{OGD}$ & $L_{\nabla H}$ &  $l$ & $\mu$ & $L_{\nabla J}$ \\ \hline
	\textbf{E7}  & $3$  & $168$  & $1$ & $\mathbb{R}_+^3$ & $1.2$ & $\mathbf{0}$ & $1/l$ & $4\gamma$ &  $2\max_t \lambda_{\max}({\mathbf{H}_{f_t}})$   &  $2\min_t \lambda_{\min}({\mathbf{H}_{f_t}})$ & $l+4\gamma$  \\ \hline
	\end{tabular} 
	\caption{
    {Parameters for the economic power dispatch experiment. $\lambda_{\max}(\mathbf{H}_{f_t})$ ($\lambda_{\min}(\mathbf{H}_{f_t})$) denotes the largest (smallest) eigenvalue of the Hessian of $f_t$. } \label{table_ed}
}
\end{table*}
{We set $\gamma = 1$ as the switching cost parameter, and have an imbalance penalty parameter of $\xi_t = \xi = 1.2$. Since generator power outputs cannot be negative, we take the feasible set $\cX = \mathbb{R}_+^3$. For the supply $s_t$, we take the wind generation data for New Hampshire (NH) and for the demand $d_t$, we take the load profile for New Hampshire from the ISO New England operations reports \cite{supply, demand}. These profiles are depicted in Figure \ref{fig:pow_dem} and Figure \ref{fig:pow_sup} respectively. We consider the time period of June 9-15, 2017. Since the data provided is hourly, we have $N=168$ timesteps. Table \ref{table_ed} summarizes the parameters used in this experiment.
}

{The results are presented in Figure \ref{fig:pow_res}. It is interesting to note that RHAPD achieves nearly the same performance as MPC.  As observed in the earlier experiments, RHAPD-S outperforms RHGD. As earlier, the runtime of MPC was again observed to be several times higher than that of the other algorithms. }

\section{Conclusion}

We put forth proximal descent-based algorithms for solving the smoothed online convex optimization (SOCO) problem with predictions. We propose a receding horizon alternating proximal descent (RHAPD) algorithm for proximable stage costs, and a variant RHAPD-S for smooth stage costs. We show that the dynamic regret of the proposed algorithms is upper bounded by a multiple of the path length, and decays exponentially with the length of the prediction window $W$. The bounds can be further tightened when the switching cost is quadratic. Further, we show that the classical alternating minimization algorithm turns out to be a special case of the proposed RHAPD algorithm. We demonstrate the efficacy of our algorithms through numerical experiments on regression, {economic power dispatch}, and trajectory tracking problems. Our algorithms are able to outperform the gradient-based algorithm RHGD and its accelerated variant RHAG, while maintaining the same real-time performance.

Before concluding, we summarize extensions and open problems that remain in this area. First, we note that while the work considered strongly convex stage costs, extension to $p$-uniformly convex functions for $p > 2$ is straightforward. We note that while empirically, the performance of the proposed RHAPD-S algorithm is better than that of RHGD and RHAG, the dynamic regret bounds of RHAPD-S are only better than those of RHGD for large values of switching cost parameter $\gamma$ and always worse than RHAG. The theoretical explanation of this observation, in the form of tighter dynamic regret bounds for RHAPD-S, remains an open problem. 

\bibliographystyle{IEEEtran} 
\bibliography{IEEEabrv,references}
\newpage
\section*{Appendix A (\bf Proximal decrease property)}\label{proxdecproof}
\label{lemma_decrease}
\begin{lemma}
    Let \(s: \cX \to \Rn \cup \{\infty\}\) be a \(\mu\)-strongly convex function and \(h: \cX \to \Rn\) be a $L$-smooth function over some non-empty, closed, and convex set $\cX \subseteq \Rn^d$. Then the proximal update \(\tx = \prox_{\tau s+\ind_\cX} (\x - \tau \nabla h(\x))\) for $\x \in \cX$ and $\tau > 0$ implies that
	\begin{align} \nn
		s(\tx)+h(\tx) \leq s(\x) + h(\x) - \left(\frac{\mu}{2} + \frac{1}{\tau} - \frac{L}{2}\right) \norm{\x - \tx}^{2}.
	\end{align}
\end{lemma}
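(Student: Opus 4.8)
The plan is to exploit the strong convexity of the objective that defines the proximal step, evaluate the associated minimizer bound at the anchor point $\x$, and then absorb the leftover gradient term via the descent lemma for the smooth part $h$.

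First I would unfold the definition of the proximal update: by \eqref{proxdef}, $\tx$ is the global minimizer over $\Rn^d$ of
\begin{align*}
	\phi(\z) := s(\z) + \ind_{\cX}(\z) + \tfrac{1}{2\tau}\norm{\z - \x + \tau\nabla h(\x)}^2.
\end{align*}
Since $s$ is $\mu$-strongly convex, $\ind_{\cX}$ is convex, and $\z \mapsto \tfrac{1}{2\tau}\norm{\z - \x + \tau\nabla h(\x)}^2$ is $\tfrac{1}{\tau}$-strongly convex, $\phi$ is $(\mu + \tfrac{1}{\tau})$-strongly convex; as $0 \in \partial\phi(\tx)$, it obeys $\phi(\z) \ge \phi(\tx) + \tfrac{1}{2}\big(\mu + \tfrac{1}{\tau}\big)\norm{\z - \tx}^2$ for all $\z$.

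Second, I would instantiate this at $\z = \x$, using that $\ind_{\cX}(\x) = \ind_{\cX}(\tx) = 0$ because $\x, \tx \in \cX$. Expanding $\norm{\tau\nabla h(\x)}^2$ and $\norm{\tx - \x + \tau\nabla h(\x)}^2$ and cancelling the common $\tfrac{\tau}{2}\norm{\nabla h(\x)}^2$, the $\tfrac{1}{2\tau}\norm{\tx - \x}^2$ term generated by the expansion merges with the $\tfrac{1}{2\tau}\norm{\z - \tx}^2$ piece of the strong-convexity bound, giving
\begin{align*}
	s(\tx) \le s(\x) - \ip{\nabla h(\x)}{\tx - \x} - \left(\tfrac{\mu}{2} + \tfrac{1}{\tau}\right)\norm{\x - \tx}^2.
\end{align*}
Then I would add the descent-lemma inequality for the $L$-smooth function $h$, valid since $\x, \tx \in \cX$, namely $h(\tx) \le h(\x) + \ip{\nabla h(\x)}{\tx - \x} + \tfrac{L}{2}\norm{\tx - \x}^2$. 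The inner-product terms cancel and the squared-norm coefficients combine to $-\big(\tfrac{\mu}{2} + \tfrac{1}{\tau} - \tfrac{L}{2}\big)$, which is exactly the claimed estimate.

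There is no real obstacle in this argument; the only points that need care are the bookkeeping of the strong-convexity constant of $\phi$ — it is $\mu + \tfrac{1}{\tau}$, not $\mu$ — and correctly merging the two $\tfrac{1}{2\tau}$-type coefficients coming, respectively, from the expansion of the prox objective and from the strong-convexity lower bound, together with the elementary but essential observation that $\ind_{\cX}$ vanishes at both $\x$ and $\tx$ so that the final inequality is genuinely a statement about $s + h$.
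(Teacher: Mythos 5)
Your proof is correct and follows essentially the same route as the paper's: both establish the intermediate bound $s(\tx) \le s(\x) - \ip{\nabla h(\x)}{\tx - \x} - \left(\tfrac{\mu}{2} + \tfrac{1}{\tau}\right)\norm{\x - \tx}^2$ from the proximal step and then add the descent lemma for the $L$-smooth part $h$, with the same cancellation of the inner-product terms. The only difference is organizational: the paper derives that intermediate bound from the subgradient optimality condition of the prox combined with the strong-convexity lower bound of $s$ at $\tx$, whereas you obtain it from the quadratic-growth property of the $(\mu + \tfrac{1}{\tau})$-strongly convex prox objective at its minimizer evaluated at $\z = \x$ — the same computation phrased in function values rather than via a variational inequality.
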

\begin{proof}
    Let $\u = \x - \tau \nabla h(\x)$ so that $\tx = \prox_{\tau s+\ind_\cX}(\u) = \argmin_{\z \in \cX} s(\z) + \frac{1}{2\tau}\norm{\z-\u}^{2}$. The optimality condition of the proximal operation implies that
	\begin{align}
		\ip{\partial s(\tx)}{\x - \tx} \geq \frac{1}{\tau}\ip{\u-\tx}{\x - \tx}. \label{proxopt}
	\end{align}
 	Next, the $\mu$-strong convexity of $s$ implies the quadratic lower bound on $s(\x)$, which takes the form
 	\begin{align}
 		&s({\x}) \geq s(\tx) + \ip{\partial s(\tx)}{\x - \tx} + \frac{\mu}{2} \norm{\x - \tx}^{2}, \nonumber \\
 		&\geqtext{\eqref{proxopt}} s(\tx)+ \frac{1}{\tau} \ip{\x - \tau \nabla h(\x) - \tx}{\x - \tx} + \frac{\mu}{2} \norm{\x - \tx}^{2}, \nonumber \\
 		& =  s(\tx) + \ip{\nabla h(\x)}{\tx - \x} + \left(\frac{\mu}{2} + \frac{1}{\tau}\right) \norm{\x - \tx}^{2}. \label{qlb}
 	\end{align}
 	Likewise, the $L$-smoothness of $h$ implies the quadratic upper bound on $h(\tx)$ which can be written as
 	\begin{align}
 		h(\x) \geq h(\tx) + \ip{\nabla h(\x)}{\x - \tx} - \frac{L}{2} \norm{\x - \tx}^{2} .\label{qub}
 	\end{align}
 	The desired result follows from adding \eqref{qlb} and \eqref{qub}. 
\end{proof}

\section*{Appendix B (\bf Sufficient decrease property for the iterates of APGD \eqref{alt_prox_update})}\label{proof_of_decrease_non_smooth}
\begin{lemma}\label{decrease_non_smooth}
	Under assumptions \ref{non_smooth}, \ref{g_convex}, the iterates \(\{\x\^k\}\) generated by \eqref{alt_prox_update} satisfy
	\begin{align}\label{eq:suffdecns_supp}
		J(\x\^k) - J(\x\^{k-1}) \leq  - \rho \norm{\x\^k - \x\^{k-1}}^{2},
	\end{align}
	for all $k \geq 1$, where $\rho := \frac{\mu}{2} + \frac{1}{\tau}-l_g$.
\end{lemma}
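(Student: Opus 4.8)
The plan is to reduce the per-iteration decrease to $N$ applications of the proximal decrease property proved in Appendix A, one for each block update performed inside iteration $k$, and then telescope. First I would fix $k$ and introduce, for $j \in \{0,1,\dots,N\}$, the intermediate super-vectors
\begin{align}\nn
\z^{(k,j)} := \big(x_1\^k,\dots,x_j\^k,\,x_{j+1}\^{k-1},\dots,x_N\^{k-1}\big),
\end{align}
so that $\z^{(k,0)} = \x\^{k-1}$, $\z^{(k,N)} = \x\^k$, and $\z^{(k,j)}$ differs from $\z^{(k,j-1)}$ only in block $j$, where $x_j\^{k-1}$ has been replaced by $x_j\^k$ via \eqref{alt_prox_update} (recall that, by \eqref{evolution}, the blocks are refreshed in increasing order $t = 1,\dots,N$, and that every iterate lies in $\cX$ because the $\prox$ with $\ind_\cX$ enforces it). It then suffices to bound each one-block increment $J(\z^{(k,j)}) - J(\z^{(k,j-1)})$ and add.

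Next I would identify exactly which summands of $J(\x) = \sum_t f_t(x_t) + \sum_t g(x_t,x_{t-1})$ change when only block $j$ moves: the stage cost $f_j$; the switching term $g(x_j,x_{j-1})$, whose second argument is the already-updated $x_{j-1}\^k$; and, when $j < N$, the switching term $g(x_{j+1},x_j)$, whose first argument is still the stale $x_{j+1}\^{k-1}$. Collecting these and using the definition \eqref{htk} of $H_j\^k$ (the case $j = N$ and the convention $x_0\^k = x_0$ are absorbed automatically), this yields the clean identity
\begin{align}\nn
J(\z^{(k,j)}) - J(\z^{(k,j-1)}) = \big(f_j(x_j\^k) + H_j\^k(x_j\^k)\big) - \big(f_j(x_j\^{k-1}) + H_j\^k(x_j\^{k-1})\big).
\end{align}

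Then I would invoke the lemma of Appendix A with $s = f_j$ (which is $\mu_j$-strongly convex by \ref{non_smooth}), $h = H_j\^k$, step size $\tau_j = \tau$, base point $x_j\^{k-1} \in \cX$, and output $x_j\^k = \prox_{\tau f_j + \ind_\cX}\big(x_j\^{k-1} - \tau\nabla H_j\^k(x_j\^{k-1})\big)$. The one thing to check is that $H_j\^k$ is $2l_g$-smooth in $x_j$ over $\cX$: since $x_j \mapsto g(x_j, x_{j-1}\^k)$ and $x_j \mapsto g(x_{j+1}\^{k-1}, x_j)$ each have $l_g$-Lipschitz gradient by \eqref{g_smoothx} and \eqref{g_smoothy}, the triangle inequality gives a Lipschitz constant $l_g + l_g = 2l_g$ for $\nabla H_j\^k$. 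Taking $L = 2l_g$ in Appendix A therefore gives $J(\z^{(k,j)}) - J(\z^{(k,j-1)}) \le -\big(\tfrac{\mu_j}{2} + \tfrac{1}{\tau} - l_g\big)\|x_j\^k - x_j\^{k-1}\|^2$. Summing this over $j = 1,\dots,N$ telescopes the left side to $J(\x\^k) - J(\x\^{k-1})$, and bounding $\mu_j \ge \mu$ together with $\sum_j \|x_j\^k - x_j\^{k-1}\|^2 = \|\x\^k - \x\^{k-1}\|^2$ produces \eqref{eq:suffdecns_supp} with $\rho = \tfrac{\mu}{2} + \tfrac{1}{\tau} - l_g$.

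I expect the main obstacle to be the bookkeeping of the second step: one must carefully track, for each single-block move, which switching term currently sees an updated neighbor and which sees a stale one, so that the resulting increment matches precisely the re-defined function $H_j\^k$ from \eqref{htk} rather than a gradient of the true $H$. The only other subtlety is the (minor) observation that the two partial-smoothness constants from \eqref{g_smoothx}--\eqref{g_smoothy} add up, so that the effective smoothness constant entering Appendix A is $2l_g$ and the final coefficient in $\rho$ is $l_g$, not $l_g/2$.
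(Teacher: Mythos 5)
Your proposal is correct and follows essentially the same route as the paper: the same Appendix A proximal decrease lemma applied blockwise with the $2l_g$-smoothness of $H_t\^k$ (from \eqref{g_smoothx}--\eqref{g_smoothy}), giving the per-block constant $\frac{\mu_t}{2}+\frac{1}{\tau}-l_g$, followed by a telescoping accounting of the switching terms. The only difference is organizational: you telescope $J$ along the intermediate sweep iterates $\z^{(k,j)}$, whereas the paper sums the block inequalities first and then shows by reindexing (using $x_0\^k = x_0\^{k-1} = x_0$) that $\sum_t \big(H_t\^k(x_t\^k)-H_t\^k(x_t\^{k-1})\big) = H(\x\^k)-H(\x\^{k-1})$ --- the two computations are equivalent.
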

For the sake of brevity, let $\rho_t := \frac{\mu_t}{2} + \frac{1}{\tau} - l_g$ so that $\rho = \min_{1\leq t \leq N} \rho_t$. Before starting the proof, we first establish the following preliminary result. 
\begin{lemma}\label{hsmooth}
		Under assumption \ref{g_convex}, the function $H_t\^k$ as defined in \eqref{htk} is $2l_g$-smooth over \(\cX\).
\end{lemma}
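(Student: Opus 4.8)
\textbf{Proof proposal for Lemma \ref{hsmooth}.} The plan is to treat the two cases $t < N$ and $t = N$ separately, and in the first (non-trivial) case decompose $H_t\^k$ into the sum of two functions, each of which is $l_g$-smooth in $x_t$ by the partial smoothness conditions already derived from Assumption \ref{g_convex}.

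First I would fix $t$ with $1 \leq t \leq N-1$ and treat the frozen blocks $x_{t-1}\^k$ and $x_{t+1}\^{k-1}$ as constants. Then $H_t\^k(x_t) = \phi_1(x_t) + \phi_2(x_t)$, where $\phi_1(x_t) := g(x_t, x_{t-1}\^k)$ and $\phi_2(x_t) := g(x_{t+1}\^{k-1}, x_t)$. Since $g$ is differentiable (it is $l_g$-smooth by \ref{g_convex}), we have $\nabla \phi_1(x_t) = \nabla_1 g(x_t, x_{t-1}\^k)$ and $\nabla \phi_2(x_t) = \nabla_2 g(x_{t+1}\^{k-1}, x_t)$. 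Applying the partial smoothness bound \eqref{g_smoothx} with $y = x_{t-1}\^k$ gives $\norm{\nabla\phi_1(u) - \nabla\phi_1(x)} \leq l_g \norm{u - x}$ for all $u, x \in \cX$, so $\phi_1$ is $l_g$-smooth over $\cX$; similarly, \eqref{g_smoothy} with $x = x_{t+1}\^{k-1}$ gives that $\phi_2$ is $l_g$-smooth over $\cX$. Since the sum of an $L_1$-smooth and an $L_2$-smooth function is $(L_1 + L_2)$-smooth (by the triangle inequality on the gradients), $H_t\^k$ is $2l_g$-smooth over $\cX$.

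For the boundary case $t = N$, we have $H_N\^k(x_N) = g(x_N, x_{N-1}\^k)$, which by the same application of \eqref{g_smoothx} is $l_g$-smooth, and hence a fortiori $2l_g$-smooth, over $\cX$. This covers all cases and completes the argument.

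I do not anticipate a genuine obstacle here: the only thing to be careful about is that the partial smoothness inequalities \eqref{g_smoothx}--\eqref{g_smoothy} are exactly the statements needed for the one-variable restrictions $\phi_1, \phi_2$, and that ``$L$-smooth'' is used throughout in the sense of $L$-Lipschitz gradient so that smoothness constants add under summation. If one instead wanted to avoid invoking \eqref{g_smoothx}--\eqref{g_smoothy} directly, one could derive them from \eqref{g_smooth} by taking $v = y$ (resp.\ $u = x$), but since the excerpt already records \eqref{g_smoothx}--\eqref{g_smoothy} as consequences of \eqref{g_smooth}, the argument above suffices.
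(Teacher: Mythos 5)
Your proposal is correct and follows essentially the same route as the paper: both decompose the gradient of $H_t\^k$ into the two partial gradients of $g$ with the other block frozen, apply the partial smoothness bounds \eqref{g_smoothx} and \eqref{g_smoothy} together with the triangle inequality, and handle $t=N$ as the trivially $l_g$-smooth (hence $2l_g$-smooth) case. (Minor point in your favor: for $t=N$ the varying argument is the first one, so citing \eqref{g_smoothx} there, as you do, is the precise reference.)
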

\begin{proof}
For $1 \leq t \leq N-1$, we have from triangle inequality that
\begin{align}
\norm{\nabla H_t\^k(y) - \nabla H_t\^k(x)}
&\leq \norm{\nabla_1 g(y, x_{t - 1}\^k) - \nabla_1 g(x, x_{t - 1}\^k)} + \norm{\nabla_{2} g(x_{t + 1}\^{k-1}, y) - \nabla_{2} g(x_{t + 1}\^{k-1}, x)}, \nn \\
&\leqtext{\eqref{g_smoothx},\eqref{g_smoothy}} 2l_g\norm{y-x} \nonumber,
\end{align}
for all \(x, y \in \cX\), which implies the $2l_g$-smoothness of $H_t\^k$ over \(\cX\). Since $H_N\^k(x) = g(x, x_{N - 1}\^k)$ is $l_g$-smooth over \(\cX\) \eqref{g_smoothy}, it is also $2l_g$-smooth. 
\end{proof}
We are now ready to prove Lemma \ref{decrease_non_smooth}. Applying Lemma \ref{lemma_decrease} to the update in \eqref{alt_prox_update} with $s(x) = f_t(x)$ (which is \(\mu_t\) strongly convex over \(\cX\)) and $h(x) = H_t\^k(x)$ (which is \(2l_g\) smooth over \(\cX\), as shown in Lemma \ref{hsmooth}), we obtain
\begin{align}
 f_t(x_t\^k) + H_t\^k(x_t\^k) -	f_t(x_t\^{k-1})  - H_t\^k(x_t\^{k-1}) \leq -\rho_t \norm{x_t\^k - x_t\^{k-1}}^{2} \nn,
\end{align}
which upon summing over $t = 1, \ldots, N$, yields
\begin{align}
	\sn f_t(x_t\^k) -\sn f_t(x_t\^{k-1}) + \sn \Big(H_t\^k(x_t\^k)  - H_t\^k(x_t\^{k-1})\Big) &\leq -\sn\rho_t \norm{x_t\^k - x_t\^{k-1}}^{2} \nonumber \\ &\leq  -\rho \norm{\x_t\^k-\x_t\^{k-1}}^2,  \label{fgdiff}
\end{align}
where we have used the fact that $\rho = \min_t \rho_t$. 
Finally, we note that 
\begin{align}
	\sn \left(H_t\^k(x_t\^k)  - H_t\^k(x_t\^{k-1})\right) &= \sn g(x_t\^k, x_{t-1}\^k) + \sum_{t=1}^{N-1}g(x_{t+1}\^{k-1},x_t\^k) \nonumber\\ 
	& \hspace{20mm}- \sn g(x_t\^{k-1}, x_{t-1}\^k)  - \sum_{t=1}^{N-1} g(x_{t+1}\^{k-1},x_t\^{k-1}), \label{telescopic} \\
	=& \sn g(x_t\^k, x_{t-1}\^k) - g(x_1^{(k-1)},x_0^{(k)}) - \sum_{t=2}^{N} g(x_t\^{k-1},x_{t-1}\^{k-1}), \nonumber\\
	= &\sn g(x_t\^k, x_{t-1}\^k) - \sn g(x_t\^{k-1}, x_{t-1}\^{k-1}), \label{gdiff}
\end{align}
where the second and third terms in \eqref{telescopic} add up to yield $-g(x_1^{(k-1)},x_0^{(k)})$, which is then subsumed into the last term since $x_0\^k = x_0 = x_0\^{k-1}$. Substituting \eqref{gdiff} into \eqref{fgdiff}, we obtain the required result. This completes the proof. It is remarked that for the bound to be useful, we require $\tau$ to be such that $\rho > 0$. 

\section*{Appendix C (\textbf{Subgradient bound for the iterates of APGD \eqref{alt_prox_update}})}\label{proof_of_subgrad_bound_non_smooth}
\begin{lemma}\label{subgrad_bound_non_smooth}
	Under assumption \ref{g_convex}, for all $k \geq 1$, there exists a $\v\^k \in \partial \Jt(\x\^k)$, such that 
	\begin{align}
		\norm{\v\^k} \leq \beta \norm{\x\^k - \x\^{k-1}}, \label{subgradnsboundeqn}
	\end{align}
	 where $\beta^2 := 2\left(\sqrt{5}l_g + \frac{1}{\tau}\right)^2$.
	\end{lemma}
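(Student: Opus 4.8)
The plan is to build an explicit element $\v\^k=(v_1\^k,\dots,v_N\^k)\in\partial\Jt(\x\^k)$ straight out of the optimality conditions of the proximal updates \eqref{alt_prox_update}, and then bound $\norm{\v\^k}$ by the successive differences $a_t:=\norm{x_t\^k-x_t\^{k-1}}$. Since $\Jt(\x)=\sum_{t=1}^N\big(f_t(x_t)+\ind_\cX(x_t)\big)+H(\x)$ with $H$ differentiable and the sum block-separable, a subgradient of $\Jt$ at $\x\^k$ is obtained block-wise by adding $[\nabla H(\x\^k)]_t$ to any element of $\partial(f_t+\ind_\cX)(x_t\^k)$. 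The update \eqref{alt_prox_update} reads $x_t\^k=\prox_{\tau f_t+\ind_\cX}(u_t)$ with $u_t=x_t\^{k-1}-\tau\nabla H_t\^k(x_t\^{k-1})$, so by \eqref{proxdef} its first-order optimality condition gives $\tfrac1\tau(u_t-x_t\^k)\in\partial(f_t+\ind_\cX)(x_t\^k)$, i.e. $g_t\^k:=-\tfrac1\tau(x_t\^k-x_t\^{k-1})-\nabla H_t\^k(x_t\^{k-1})\in\partial(f_t+\ind_\cX)(x_t\^k)$.

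Putting $v_t\^k:=g_t\^k+[\nabla H(\x\^k)]_t$ yields $\v\^k\in\partial\Jt(\x\^k)$ together with the residual identity $v_t\^k=-\tfrac1\tau(x_t\^k-x_t\^{k-1})+\big([\nabla H(\x\^k)]_t-\nabla H_t\^k(x_t\^{k-1})\big)$, so $\norm{v_t\^k}\le\tfrac1\tau a_t+\norm{[\nabla H(\x\^k)]_t-\nabla H_t\^k(x_t\^{k-1})}$. Everything thus reduces to controlling the mismatch between the true partial gradient of $H$ at the updated iterate and the surrogate gradient $\nabla H_t\^k$ from \eqref{htk} evaluated at the stale iterate. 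For $1\le t\le N-1$ this mismatch equals $[\nabla_1g(x_t\^k,x_{t-1}\^k)-\nabla_1g(x_t\^{k-1},x_{t-1}\^k)]+[\nabla_2g(x_{t+1}\^k,x_t\^k)-\nabla_2g(x_{t+1}\^{k-1},x_t\^{k-1})]$, while for $t=N$ only the $\nabla_1$ term survives.

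Next I would invoke the partial smoothness of $g$: the first bracket is at most $l_g a_t$ by \eqref{g_smoothx}; for the second, I insert the intermediate point $(x_{t+1}\^k,x_t\^{k-1})$, so that the change in the second argument contributes $\le l_g a_t$ by \eqref{g_smoothy} and the change in the first argument contributes $\le l_g a_{t+1}$ (this last bound follows from \eqref{g_smooth} with a frozen second coordinate). Hence the mismatch is at most $2l_g a_t+l_g a_{t+1}=l_g\langle(2,1),(a_t,a_{t+1})\rangle\le\sqrt5\,l_g\sqrt{a_t^2+a_{t+1}^2}$ by Cauchy--Schwarz, and using $a_t\le\sqrt{a_t^2+a_{t+1}^2}$ gives $\norm{v_t\^k}\le(\tfrac1\tau+\sqrt5\,l_g)\sqrt{a_t^2+a_{t+1}^2}$ for $1\le t\le N-1$, and $\norm{v_N\^k}\le(\tfrac1\tau+l_g)a_N\le(\tfrac1\tau+\sqrt5\,l_g)a_N$.

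Finally I would square and sum: $\norm{\v\^k}^2\le(\tfrac1\tau+\sqrt5\,l_g)^2\big[\sum_{t=1}^{N-1}(a_t^2+a_{t+1}^2)+a_N^2\big]=(\tfrac1\tau+\sqrt5\,l_g)^2\big[a_1^2+2\sum_{t=2}^N a_t^2\big]\le 2(\tfrac1\tau+\sqrt5\,l_g)^2\sum_{t=1}^N a_t^2$, which is precisely $\beta^2\norm{\x\^k-\x\^{k-1}}^2$. The two delicate points -- and the only real obstacles -- are (i) justifying that the vector assembled from the blockwise prox optimality conditions is a bona fide subgradient of $\Jt$, i.e. the additivity bookkeeping for $\partial(f_t+\ind_\cX)$ on the separable-plus-smooth decomposition of $\Jt$; and (ii) arranging the Cauchy--Schwarz step and the telescoping inequality $\sum_{t=1}^{N-1}(a_t^2+a_{t+1}^2)+a_N^2\le 2\sum_t a_t^2$ so that the constants collapse exactly to the stated $\sqrt5$ inside $\beta$ and the prefactor $2$, rather than to something larger.
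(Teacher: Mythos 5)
Your proposal is correct and follows essentially the same route as the paper's Appendix C: the same blockwise subgradient $v_t\^k$ built from the prox optimality conditions, the same gradient-mismatch decomposition, and the same intermediate point $(x_{t+1}\^k,x_t\^{k-1})$ with \eqref{g_smoothx}, \eqref{g_smoothy}, \eqref{g_smooth}. The only (harmless) difference is the final bookkeeping — you use Cauchy--Schwarz on $(2,1)$ plus the telescoping sum, whereas the paper uses $(a+b)^2\le 2a^2+2b^2$ termwise before absorbing constants — and both land on the same $\beta$.
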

\begin{proof}
    The proof follows from the smoothness of the penalties $g$. Let $[\partial \Jt(\x)]_t$ denote the partial subgrad-differential of $\Jt(\x)$ with respect to $x_t$. We split the proof into two cases, that of $1 \leq t \leq N-1$ and that of $t = N$. 

\noindent\subsubsection*{Case $1 \leq t\leq N-1$} From the definition of $\Jt$ in \eqref{jtdef}, we have
\begin{align}
	[\partial \Jt(\x\^k)]_t = \partial f_t(x_t\^k)+ \nabla_1 g(x_t\^k, x_{t - 1}\^k) + \nabla_{2} g(x_{t + 1}\^k, x_t\^k) + \partial \ind_{\cX}(x_t\^k). \label{subjt}
\end{align}
The optimality condition of \eqref{alt_prox_update} can be written as 
	 \begin{align}
		\mathbf{0} \in \partial f_t(x_t\^k) + \partial \ind_{\cX}(x_t\^k) + \frac{x_t\^k - x_t\^{k-1}}{\tau} \nonumber + \nabla_1 g(x_t\^{k-1}, x_{t - 1}\^k) + \nabla_{2} g(x_{t + 1}\^{k-1}, x_t\^{k-1}) \nn.
	\end{align}
	Substituting the definition of $[\partial \Jt(\x\^k)]_t$ in \eqref{subjt}, it follows that $v_t\in [\partial \Jt(\x\^k)]_t$ where
	\begin{align}
			v_t\^k := \frac{x_t\^{k-1} - x_t\^k}{\tau} - \nabla_1 g(x_t\^{k-1}, x_{t - 1}\^k) - \nabla_{2}g(x_{t + 1}\^{k-1}, x_t\^{k-1}) + \nabla_1 g(x_t\^k, x_{t - 1}\^k) + \nabla_{2} g(x_{t + 1}\^k, x_t\^k).\label{jtisubgrad}
	\end{align}
	Using triangle inequality, $\norm{v_t\^k}$ can be bounded by
	\begin{align}\label{vtbound}
		\norm{v_t\^k} \leq \frac{1}{\tau}\norm{x_t\^{k-1} - x_t\^k} &+  \norm{\nabla_1 g(x_t\^k, x_{t - 1}\^k) - \nabla_1 g(x_t\^{k-1}, x_{t - 1}\^k)} \\ &+ \norm{\nabla_{2} g(x_{t + 1}\^k, x_t\^k) - \nabla_{2}g(x_{t + 1}\^{k-1}, x_t\^{k-1})}.
	\end{align}
	We next use assumption \ref{g_convex} to bound the second and third terms in \eqref{vtbound}. For the second term, we have that
	\begin{align}
		\norm{\nabla_1 g(x_t\^k, x_{t - 1}\^k) - \nabla_1 g(x_t\^{k-1}, x_{t - 1}\^k)} \leqtext{\eqref{g_smoothx}} l_g\norm{x_t\^k-x_t\^{k-1}}, \label{secondterm}
	\end{align} 
	while for the third term, we have
	\begin{align}
		\norm{\nabla_{2} g(x_{t + 1}\^k, x_t\^k) - \nabla_{2}g(x_{t + 1}\^{k-1}, x_t\^{k-1})}
		&\leq \norm{\nabla_{2} g(x_{t + 1}\^k, x_t\^k) - \nabla_{2}g(x_{t + 1}\^k, x_t\^{k-1})} \nonumber \\ 
		&\hspace{5mm}+ \norm{\nabla_{2}g(x_{t + 1}\^k, x_t\^{k-1}) - \nabla_{2}g(x_{t + 1}\^{k-1}, x_t\^{k-1})},\nonumber\\
		&\leqtext{\eqref{g_smoothy}} l_g\norm{x_t\^k-x_t\^{k-1}} + l_g\norm{x_{t+1}\^k - x_{t+1}\^{k-1}}, \label{thirdterm}
	\end{align}
	where in \eqref{thirdterm}, we have also used \eqref{g_smooth}, which implies that \[\norm{\nabla_2g(x_{t + 1}\^k, x_t^{(k - 1)}) - \nabla_2g(x_{t + 1}\^{k - 1},x_t\^{k - 1})} \leq l_g \norm{x_{t + 1}\^k - x_{t + 1}\^{k - 1}}.\] 
	Substituting \eqref{secondterm} and \eqref{thirdterm} into \eqref{vtbound}, we obtain
	\begin{align}
		\norm{v_t\^k} &\leq \left(2l_g + \frac{1}{\tau}\right)\norm{x_t\^k-x_t\^{k-1}} +  l_g\norm{x_{t+1}\^k - x_{t+1}\^{k-1}}, \nonumber 
  \end{align}
  which implies 
  \begin{align}
		\norm{v_t\^k}^2 &\leq 2\left(2l_g + \frac{1}{\tau}\right)^2\norm{x_t\^k-x_t\^{k-1}}^2 +  2l_g^2\norm{x_{t+1}\^k - x_{t+1}\^{k-1}}^2, \label{case1}
	\end{align}
where we have used the inequality \((a + b) ^ {2} \le 2 a ^ 2 + 2 b ^ 2\) which holds for all \(a, b \in \Rn\).
	\subsubsection*{Case $t = N$} In this case, we have from \eqref{jtdef} that
	\begin{align}
		[\partial \Jt(\x\^k)]_N = \partial f_N(x_N\^k) + \nabla_1g(x_N\^k, x_{N - 1}\^k) + \partial \ind_{\cX}(x_N\^k), \label{subjtn}
	\end{align}
	while the optimality condition of \eqref{alt_prox_update} is written as
	\begin{align}\nn
		\mathbf{0}  \in \partial f_N(x_N\^k) &+ \partial \ind_{\cX}(x_N\^k) + \frac{x_N\^k - x_N\^{k-1}}{\tau} + \nabla_1 g(x_N\^{k-1}, x_{N - 1}\^k).
	\end{align}
	Substituting the definition of $[\partial \Jt(\x\^k)]_N$ in \eqref{subjtn}, we have that ${v_N ^ {(k)}} \in [\partial \Jt(\x\^k)]_N$ where 
	\begin{align}
		v_N\^k := \frac{x_N\^{k-1} - x_N\^k}{\tau} +  \nabla_1 &g(x_N\^k, x_{N - 1}\^k) - \nabla_1 g(x_N\^{k-1}, x_{N - 1}\^k), \nn
	\end{align}
	whose norm can be bounded using the triangle inequality and assumption \ref{g_convex} as
	\begin{align}
		\norm{v_N\^k} &\leq \frac{1}{\tau}\norm{x_N\^{k-1} - x_N\^k} + \norm{\nabla_1 g(x_N\^k, x_{N - 1}\^k) - \nabla_1 g(x_N\^{k-1}, x_{N - 1}\^k) },\nonumber \\
		&\leqtext{\eqref{g_smoothx}} \left(l_g+\frac{1}{\tau}\right)\norm{x_N\^{k-1} - x_N\^k}, \label{case2}
	\end{align}
	Combining the two cases, it can be seen that there exists $\v \in \partial \Jt(\x\^k)$ such that 
	\begin{align}
		\norm{\v\^k}^2 &\leqtext{\eqref{case1},\eqref{case2}} 2\sum_{t=1}^{N-1} \left(2l_g + \frac{1}{\tau}\right)^2\norm{x_t\^k-x_t\^{k-1}}^2 +  2\sum_{t=1}^{N-1} l_g^2\norm{x_{t+1}\^k - x_{t+1}\^{k-1}}^2 + \left(l_g+\frac{1}{\tau}\right)^2\norm{x_N\^{k-1} - x_N\^k}^2, \nn\\
		&\leq 2\left(\sqrt{5}l_g + \frac{1}{\tau}\right)^2\sn \norm{x_t\^k-x_t\^{k-1}}^2 \nonumber = 2\left(\sqrt{5}l_g + \frac{1}{\tau}\right)^2 \norm{\x\^k-\x\^{k-1}}^2. \nn
	\end{align}
\end{proof}

\section*{Appendix D (\textbf{Regret of initialization for RHAPD})} \label{proof_dynamic_regret_non_smooth_general_g}
\begin{lemma}\label{dynamic_regret_non_smooth_initialization}
		Under assumptions \ref{bounded_subgradient}, \ref{non_smooth}, \ref{g_convex}, the regret attained by the intialization policy \(\mathcal{I}\), that is \(x_{t + W} ^ {(0)} = \argmin_{x \in \mathcal{X}} f_{t + W - 1}(x)\), is bounded by \begin{equation}\nn
			\mathrm{Reg}(\mathcal{I}) \leq G\bigg(1 + \frac{\gamma}{\mu} \bigg) \left(\sum_{t = 1} ^ {N} \norm{\theta_t - \theta_{t - 1}}\right),
		\end{equation}
		where \(\theta_0 = x_0, \theta_t = \argmin_{x \in \mathcal{X}} f_t(x)\); \(\rho\) and \(\beta\) are the constants as defined in Lemma \ref{decrease_non_smooth} and Lemma \ref{subgrad_bound_non_smooth} respectively.
	\end{lemma}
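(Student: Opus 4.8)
The plan is to evaluate $J(\x\^0)$ in closed form for the initialization $\mathcal{I}$, lower bound $J(\x^\star)$ using the strong convexity of the stage costs, and then convert the resulting (non‑negative) terms into multiples of the path length using only the Lipschitz bound \ref{bounded_subgradient} and the upper bound on $g$ in \ref{g_convex}. Concretely, under $\mathcal{I}$ we have $x_1\^0=x_0=\theta_0$ and $x_s\^0=\theta_{s-1}$ for $2\le s\le N$, and since \ref{g_convex} gives $g(\theta_0,\theta_0)\le\frac{\gamma}{2}\norm{\theta_0-\theta_0}^2=0$, the first switching term vanishes, so
\[
	J(\x\^0) = \sum_{t=1}^{N} f_t(\theta_{t-1}) + \sum_{t=2}^{N} g(\theta_{t-1},\theta_{t-2}).
\]

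\textbf{Reducing the regret to two path-length-type sums.} For each $t$, first-order optimality of $\theta_t=\argmin_{x\in\cX}f_t(x)$ supplies a subgradient $\partial f_t(\theta_t)$ with $\ip{\partial f_t(\theta_t)}{x-\theta_t}\ge 0$ for all $x\in\cX$; combined with the quadratic lower bound implied by \ref{non_smooth}, this yields $f_t(x)\ge f_t(\theta_t)+\frac{\mu}{2}\norm{x-\theta_t}^2$ for all $x\in\cX$. Dropping the (non-negative) switching and quadratic terms in $J(\x^\star)=\sum_t f_t(x_t^\star)+g(x_t^\star,x_{t-1}^\star)$ then gives $J(\x^\star)\ge\sum_{t=1}^N f_t(\theta_t)$. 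Hence
\[
	\mathrm{Reg}(\mathcal{I}) \;\le\; \sum_{t=1}^{N}\big(f_t(\theta_{t-1})-f_t(\theta_t)\big) \;+\; \sum_{t=2}^{N} g(\theta_{t-1},\theta_{t-2}),
\]
and each summand on the right is non-negative. The first sum is bounded by $G\sum_{t=1}^N\norm{\theta_t-\theta_{t-1}}$ directly from \ref{bounded_subgradient}.

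\textbf{Handling the quadratic switching term.} This is the step I expect to be the main obstacle: $g$ is controlled only by a \emph{squared} distance, while the target bound is \emph{linear} in the path length, so one needs an a priori diameter bound on consecutive minimizers. Combining $f_t(\theta_{t-1})-f_t(\theta_t)\ge\frac{\mu}{2}\norm{\theta_{t-1}-\theta_t}^2$ (from the previous step, applied at $x=\theta_{t-1}\in\cX$) with $f_t(\theta_{t-1})-f_t(\theta_t)\le G\norm{\theta_{t-1}-\theta_t}$ (from \ref{bounded_subgradient}) gives $\norm{\theta_t-\theta_{t-1}}\le\frac{2G}{\mu}$ for every $t$; this is exactly the place where strong convexity and Lipschitzness must be used together. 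Consequently $g(\theta_{t-1},\theta_{t-2})\le\frac{\gamma}{2}\norm{\theta_{t-1}-\theta_{t-2}}^2\le\frac{\gamma G}{\mu}\norm{\theta_{t-1}-\theta_{t-2}}$, and after reindexing ($\sum_{t=2}^N\norm{\theta_{t-1}-\theta_{t-2}}=\sum_{t=1}^{N-1}\norm{\theta_t-\theta_{t-1}}\le\sum_{t=1}^{N}\norm{\theta_t-\theta_{t-1}}$) the second sum is at most $\frac{\gamma G}{\mu}\sum_{t=1}^N\norm{\theta_t-\theta_{t-1}}$.

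\textbf{Conclusion.} Adding the two bounds yields $\mathrm{Reg}(\mathcal{I})\le G\big(1+\frac{\gamma}{\mu}\big)\sum_{t=1}^N\norm{\theta_t-\theta_{t-1}}$, which is the claim. Besides the quadratic-to-linear conversion, the only care needed is the boundary bookkeeping (the vanishing $t=1$ switching cost and the one-step index shift); note that the constants $\rho$ and $\beta$ referenced in the statement do not actually enter this estimate.
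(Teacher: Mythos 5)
Your proof is correct and follows essentially the same route as the paper: evaluate $J(\x\^0)$ under the initialization $x_t\^0=\theta_{t-1}$, lower bound $J(\x^\star)$ via $f_t(x_t^\star)\ge f_t(\theta_t)$ and $g\ge 0$, control the stage-cost differences by $G\,\norm{\theta_t-\theta_{t-1}}$, and convert the squared switching terms to linear ones by combining the strong-convexity bound $f_t(\theta_{t-1})-f_t(\theta_t)\ge\frac{\mu}{2}\norm{\theta_{t-1}-\theta_t}^2$ with Lipschitzness. The only cosmetic difference is that you apply the resulting bound $\norm{\theta_t-\theta_{t-1}}\le\frac{2G}{\mu}$ termwise while the paper bounds the sum of squares at once; the estimates and constants are identical.
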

	\begin{proof}
	   The policy initializes \(x_i\^0 = \theta_{i - 1}\) for all \(1 \leq i \leq N\), and for the sake of brevity we assume \(\theta_{0} = x_{0}\). It thus follows that \begin{align} \mathrm{Reg}(\mathcal{I}) &= \sum_{t = 1}^{N} \bigg(f_t(x_t\^0) + g(x_{t} ^ {(0)}, x_{t - 1} ^ {(0)}) - f_t(x_t^\star) - g(x_{t} ^ {*}, x_{t - 1} ^ {*})\bigg),\nonumber \\
		& \stackrel{(a)}\leq \sum_{t = 1}^{N} f_t(x_t\^0) - f_t(\theta_t) + g(x_t\^0, x_{t - 1}\^0) ,\nonumber \\
		& \leqtext{\eqref{g_convex}} \sum_{t = 1} ^ {N} f(x_t\^0) - f_t(\theta_t) + \frac{\gamma}{2} \norm{x_t\^0 - x_{t - 1}\^0} ^ 2, \nonumber \\
		&\stackrel{(b)}\leq G \sum_{t = 1}^{N}\norm{x_t\^0 - \theta_t} + \sum_{t = 1}^{N - 1}\frac{\gamma}{2} \norm{\theta_{t} - \theta_{t - 1}}^{2}, \nn
  \end{align}	
  where \((a)\) follows since \(g(x, y) \ge 0\) and \(f_t({\theta_{t}}) \le f_{t}(x_{t}^\star)\), while \((b)\) follows from assumption \ref{bounded_subgradient} which implies \(f_{t}(x_{t} ^ {(0)}) - f_{t}(\theta_t) \le G \norm{x_{t} ^ {(0)} - \theta_{t}}\).

It remains to bound \(\sum_{t = 1}^{N - 1} \norm{\theta_t - \theta_{t - 1}}^{2}\). For \(1 \leq t \leq N\), the strong convexity of \(f_t(x)\) over \(\cX\) and \(\theta_t = \argmin_{x \in \cX} f_t(x)\) implies 
\begin{align}\nn f_t(x) \geq f_t(\theta_t) + \frac{\mu_t}{2} \norm{x - \theta_t}^{2},\end{align} for all \(x \in \cX\). This result follows from \cite[Equation 2]{hazan2011beyond}.
Setting \(x = \theta_{t - 1}\), we get
\begin{align}\nn f_t(\theta_{t - 1}) - f_t(\theta_t) \geq \frac{\mu_t}{2} \norm{\theta_{t - 1} - \theta_t}^{2}.\end{align}
Using assumption \ref{bounded_subgradient} as before, we obtain
\begin{align} \frac{2G}{\mu_t} \norm{\theta_{t - 1} - \theta_t} \geq \norm{\theta_t - \theta_{t - 1}}^{2} \implies \sum_{t = 1}^{N} \norm{\theta_t - \theta_{t-  1}}^{2} \leq \frac{2G}{\mu} \sum_{t = 1}^{N} \norm{\theta_{t - 1} - \theta_t}. \nn \end{align} 
The regret of the policy \(\mathcal{I}\) is therefore bounded as 
\begin{align}\nn
	\mathrm{Reg}(\mathcal{I}) \leq G\left(1 + \frac{\gamma}{\mu}\right) \sum_{t = 1}^{N} \norm{\theta_t - \theta_{t - 1}}.
\end{align} 
\end{proof}
\section*{Appendix E (\bf Sufficient decrease property and subgradient bound for APGD with quadratic switching cost)} \label{proof_subgrad_bound_quadratic}
We begin with stating the sufficient decrease property for the quadratic switching cost, which is a direct implication of Lemma \ref{decrease_non_smooth} and the fact that $g(x,y) = \frac{\gamma}{2}\norm{x-y}^2$ is $\gamma$-smooth with respect to both $x$ and $y$. 
\begin{corollary}\label{decrease_quadratic} Under assumption \ref{non_smooth} and for $g(x,y) = \frac{\gamma}{2}\norm{x-y}^2$, the iterates $\{\x\^k\}$ generated by \eqref{alt_prox_update} satisfy \begin{align}\nn
		J(\x\^k)-J(\x\^{k-1}) \leq -{\rho_q}\norm{\x\^k-\x\^{k-1}}^2,
	\end{align}
	for all $k \geq 1$ and ${\rho_q} := \frac{\mu}{2} + \frac{1}{\tau} - \gamma$.
\end{corollary}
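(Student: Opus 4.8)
The plan is to recognize Corollary~\ref{decrease_quadratic} as the specialization of Lemma~\ref{decrease_non_smooth} obtained by taking the switching-cost smoothness modulus to be $l_g=\gamma$. First I would verify that $g(x,y)=\frac{\gamma}{2}\norm{x-y}^2$ meets Assumption~\ref{g_convex}: it is convex and smooth, it satisfies \eqref{gbound} with equality, and its partial gradients $\nabla_1 g(x,y)=\gamma(x-y)$ and $\nabla_2 g(x,y)=\gamma(y-x)$ are each Lipschitz with constant $\gamma$, so the partial smoothness conditions \eqref{g_smoothx}, \eqref{g_smoothy} hold with $l_g=\gamma$. It is worth flagging at this point that the constant which actually enters the bound is the coordinatewise Lipschitz modulus of the gradient, namely $\gamma$, and not the joint smoothness modulus of $(x,y)\mapsto g(x,y)$, which is $2\gamma$; conflating the two would lose a factor.

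With $l_g=\gamma$ in hand, Lemma~\ref{hsmooth} gives that each surrogate $H_t\^k$ from \eqref{htk} is $2\gamma$-smooth over $\cX$. Applying the proximal decrease property of Appendix~A (Lemma~\ref{lemma_decrease}) to the update \eqref{alt_prox_update} with $s=f_t$ ($\mu_t$-strongly convex) and $h=H_t\^k$ ($L=2\gamma$-smooth) yields, for each $t$,
\[
f_t(x_t\^k)+H_t\^k(x_t\^k)\ \le\ f_t(x_t\^{k-1})+H_t\^k(x_t\^{k-1})-\Big(\tfrac{\mu_t}{2}+\tfrac1\tau-\gamma\Big)\norm{x_t\^k-x_t\^{k-1}}^2 .
\]
Summing over $t=1,\dots,N$, bounding $\tfrac{\mu_t}{2}+\tfrac1\tau-\gamma\ge\rho_q$, and then invoking the telescoping identity \eqref{telescopic}--\eqref{gdiff} from the proof of Lemma~\ref{decrease_non_smooth} (which uses only $x_0\^k=x_0=x_0\^{k-1}$) to reconstitute $\sum_t g(x_t\^k,x_{t-1}\^k)-\sum_t g(x_t\^{k-1},x_{t-1}\^{k-1})$, collapses the left-hand side to $J(\x\^k)-J(\x\^{k-1})$ and produces the claimed inequality with $\rho_q=\frac{\mu}{2}+\frac{1}{\tau}-\gamma$.

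There is essentially no genuine obstacle here: the argument is a verbatim rerun of the proof of Lemma~\ref{decrease_non_smooth} with $l_g$ replaced by $\gamma$, so the entire content is the bookkeeping observation that for the quadratic cost the relevant partial-smoothness constant is exactly $\gamma$ rather than $2\gamma$. Consequently the statement can be proved either by direct substitution into Lemma~\ref{decrease_non_smooth} or, equivalently, by repeating the two displayed steps above; either route requires only the quadratic form of $g$ and Assumption~\ref{non_smooth}, which is why the result is stated as a corollary rather than a lemma.
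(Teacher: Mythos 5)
Your proposal is correct and follows exactly the route the paper intends (the paper itself skips the proof, remarking it is "very similar" to Lemma~\ref{decrease_non_smooth}): apply Lemma~\ref{lemma_decrease} coordinatewise with $s=f_t$ and $h=H_t\^k$, which for the quadratic cost is $2\gamma$-smooth (indeed only $\gamma$-smooth for $t=N$), then sum and telescope via \eqref{telescopic}--\eqref{gdiff}. Your explicit flag that the relevant constant is the coordinatewise modulus $\gamma$ rather than the joint smoothness modulus $2\gamma$ is precisely the observation that yields $\rho_q=\frac{\mu}{2}+\frac{1}{\tau}-\gamma$ instead of the weaker $\frac{\mu}{2}+\frac{1}{\tau}-2\gamma$ one would get by blind substitution into Lemma~\ref{decrease_non_smooth}.
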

The proof of Corollary \ref{decrease_quadratic} is very similar to Lemma \ref{decrease_non_smooth} and is skipped for the sake of brevity.
\begin{lemma}\label{subgrad_bound_quadratic}
	For all \(k \geqslant 1\), there exists $\v\^k \in \partial \Jt(\x\^k)$ such that 
	\begin{align}\nn
		\norm{\v\^k} \leq \beta_q \norm{\x\^k-\x\^{k-1}},
	\end{align}
	 where 
	\begin{align}\nn
		\beta^2_q := 2\left(\gamma^2 + \max\left\{\left(2\gamma - \frac{1}{\tau}\right)^2, \left(\gamma - \frac{1}{\tau}\right)^2\right\}\right).
	\end{align}
	\end{lemma}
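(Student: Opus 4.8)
The plan is to follow the proof of Lemma~\ref{subgrad_bound_non_smooth} almost verbatim, but to exploit the explicit linear form of the gradients of the quadratic switching cost so as to obtain \emph{exact} expressions for the subgradient elements, which then collapse — via cancellations — into something strictly smaller than the generic bound. Concretely, for $g(x,y)=\frac{\gamma}{2}\norm{x-y}^2$ we have $\nabla_1 g(x,y)=\gamma(x-y)$ and $\nabla_2 g(x,y)=-\gamma(x-y)$, so for $1\le t\le N-1$ the block ``gradient'' is $\nabla H_t\^k(x_t\^{k-1})=\gamma\big(2x_t\^{k-1}-x_{t-1}\^k-x_{t+1}\^{k-1}\big)$, while $\nabla H_N\^k(x_N\^{k-1})=\gamma\big(x_N\^{k-1}-x_{N-1}\^k\big)$. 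Since quadratic $g$ is $\gamma$-smooth in each argument, the bound of Lemma~\ref{subgrad_bound_non_smooth} already applies with $l_g=\gamma$; the point here is to sharpen it.

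First I would write, for $t\le N-1$, the partial subdifferential $[\partial\Jt(\x\^k)]_t=\partial f_t(x_t\^k)+\partial\ind_{\cX}(x_t\^k)+\gamma(x_t\^k-x_{t-1}\^k)+\gamma(x_t\^k-x_{t+1}\^k)$ and subtract the optimality condition of the update~\eqref{alt_prox_update}, exactly as in the derivation of \eqref{jtisubgrad}. The $x_{t-1}\^k$ terms cancel completely (so no $x_0$ dependence survives at $t=1$) and the $x_{t+1}$ terms cancel partially, leaving the explicit element
\[
v_t\^k=\Big(2\gamma-\tfrac1\tau\Big)\big(x_t\^k-x_t\^{k-1}\big)-\gamma\big(x_{t+1}\^k-x_{t+1}\^{k-1}\big)\in[\partial\Jt(\x\^k)]_t,
\]
so by the triangle inequality and $(a+b)^2\le 2a^2+2b^2$, $\norm{v_t\^k}^2\le 2(2\gamma-\tfrac1\tau)^2\norm{x_t\^k-x_t\^{k-1}}^2+2\gamma^2\norm{x_{t+1}\^k-x_{t+1}\^{k-1}}^2$. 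For the boundary block $t=N$ the same computation collapses further to $v_N\^k=(\gamma-\tfrac1\tau)(x_N\^k-x_N\^{k-1})$, hence $\norm{v_N\^k}^2\le(\gamma-\tfrac1\tau)^2\norm{x_N\^k-x_N\^{k-1}}^2$.

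Then I would set $\v\^k:=(v_1\^k,\dots,v_N\^k)\in\partial\Jt(\x\^k)$ and sum: $\norm{\v\^k}^2=\sum_{t=1}^{N-1}\norm{v_t\^k}^2+\norm{v_N\^k}^2$. Collecting the coefficient of each $\norm{x_t\^k-x_t\^{k-1}}^2$, block $1$ accrues $2(2\gamma-\tfrac1\tau)^2$, each interior block $2\le t\le N-1$ accrues $2(2\gamma-\tfrac1\tau)^2+2\gamma^2$ (its own term plus the trailing $\gamma^2$-term inherited from $v_{t-1}$), and block $N$ accrues $2\gamma^2+(\gamma-\tfrac1\tau)^2$. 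Each of these is at most $\beta_q^2=2\big(\gamma^2+\max\{(2\gamma-\tfrac1\tau)^2,(\gamma-\tfrac1\tau)^2\}\big)$, so $\norm{\v\^k}^2\le\beta_q^2\sum_{t=1}^N\norm{x_t\^k-x_t\^{k-1}}^2=\beta_q^2\norm{\x\^k-\x\^{k-1}}^2$, which is the claim.

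I do not expect a genuine obstacle; the only care needed is in the cancellation algebra for $v_t\^k$ — keeping straight that $H_t\^k$ is differentiated at the old iterate $x_t\^{k-1}$ while $\Jt$ is differentiated at $x_t\^k$ — and in the final bookkeeping that every aggregated coefficient is dominated by $\beta_q^2$. The only non-immediate inequality there is that the interior coefficient $2(2\gamma-\tfrac1\tau)^2+2\gamma^2$ does not exceed $\beta_q^2$, which holds because $(2\gamma-\tfrac1\tau)^2\le\max\{(2\gamma-\tfrac1\tau)^2,(\gamma-\tfrac1\tau)^2\}$; the remaining two are even easier. The proof can therefore be compressed substantially, mirroring how Corollary~\ref{decrease_quadratic} was stated without a full write-up.
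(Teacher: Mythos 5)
Your proposal is correct and matches the paper's own proof (Appendix E, Lemma \ref{subgrad_bound_quadratic}) essentially step for step: the same explicit elements $v_t\^k=(2\gamma-\tfrac{1}{\tau})(x_t\^k-x_t\^{k-1})+\gamma(x_{t+1}\^{k-1}-x_{t+1}\^k)$ for $t\le N-1$ and $v_N\^k=(\gamma-\tfrac{1}{\tau})(x_N\^k-x_N\^{k-1})$, the same $(a+b)^2\le 2a^2+2b^2$ bound, and the same per-block coefficient bookkeeping dominated by $\beta_q^2$. No gaps.
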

\begin{proof}
    The proof of {Lemma} \ref{subgrad_bound_quadratic} differs from that of Lemma \ref{subgrad_bound_non_smooth} since the specific form of $g$ allows for tighter bounds. As in Appendix \hyperref[proof_of_subgrad_bound_non_smooth]{C}, we split the proof into two cases. 

\subsubsection*{Case $1 \leq t \leq N - 1$} 
Considering $g(x,y) = \frac{\gamma}{2}\norm{x-y}^2$, we have from \eqref{jtisubgrad} that $v_t \in [\partial \Jt(\x\^k)]_t$ where
\begin{align}
	v_t\^k &:= \frac{x_t\^{k-1} - x_t\^k}{\tau} - \gamma(x_t\^{k-1} - x_{t - 1}\^k) - \gamma(x_t\^{k-1} - x_{t + 1}\^{k-1} ) + \gamma(x_t\^k -  x_{t - 1}\^k)+ \gamma(x_t\^k-x_{t + 1}\^k), \nonumber\\
	&= \left(2\gamma - \frac{1}{\tau}\right)(x_t\^k - x_t\^{k-1}) + \gamma(x_{t + 1}\^{k-1} - x_{t + 1}\^k).\nonumber
\end{align}
A bound on $\norm{v_t\^k}^2$ may therefore be obtained as in Appendix \hyperref[proof_of_subgrad_bound_non_smooth]{C} and takes the form
\begin{align}
	\norm{v_t\^k}^2 &\leq 2\left(2\gamma - \frac{1}{\tau}\right)^2\norm{x_t\^k - x_t\^{k-1}}^2 + 2\gamma^2\norm{x_{t + 1}\^{k-1} - x_{t + 1}\^k}^2. \label{vtbound-q}
\end{align}
	
\subsubsection*{Case $t=N$}
Proceeding in the same way, we have that $	v_N\^k := \left(\gamma - \frac{1}{\tau}\right)(x\^k_N - x\^{k-1}_N) \in [\partial \Jt(\x\^k)]_N$, which can be bounded as 
\begin{align}
	\norm{{v_{N} ^ {(k)}}}^2 \leq \left(\gamma - \frac{1}{\tau}\right)^2\norm{x_N\^k - x_N\^{k-1}} ^ 2. \label{vnbound-q}
\end{align}

Combining the two cases, we see that there exists $\v\^k \in \partial \Jt(\x\^k)$ such that 
\begin{align}
	&\norm{\v\^k}^2 = \sn \norm{v_t\^k}^2,  \nonumber \\ 
	&\leqtext{\eqref{vtbound-q}, \eqref{vnbound-q}} 2\left(2\gamma - \frac{1}{\tau}\right)^2 \norm{x_1\^k - x_1\^{k-1}}^{2} + 2\left(\gamma^2 + \left(\gamma - \frac{1}{\tau}\right)^2\right) \norm{x_N\^k - x_N\^{k-1}}^{2} \nonumber\\
	&\hspace{60mm}+  \sum_{t = 2} ^ {N - 1} 2 \left( \gamma^2 + \left(2\gamma - \frac{1}{\tau} \right)^2\right) \norm{x_t\^k - x_t\^{k-1}}^2,\nn
\end{align}
which implies the required bound in Lemma \ref{subgrad_bound_quadratic}. 
\end{proof}

\section*{Appendix F (Sufficient decrease property for the iterates of APGD-S \eqref{xtk-update-smooth})}\label{proof_of_sufficient_decrease_smooth}
\begin{lemma} \label{lem:sufficient_decrease_smooth}
	Under assumption \ref{smooth}, the iterates $\x\^k$ generated by \eqref{xtk-update-smooth} satisfy
	\begin{align}
		J(\x\^k) - J(\x\^{k-1}) \leq - \rho_s \norm{\x\^k-\x\^{k-1}}^2, \nn
	\end{align}
	for all $k \geq 1$ and $\rho_s := \frac{1}{\tau} - \frac{l}{2} + \frac{\gamma}{2}$. 
\end{lemma}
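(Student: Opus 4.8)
The plan is to mirror the proof of Lemma \ref{decrease_non_smooth} from Appendix \hyperref[proof_of_decrease_non_smooth]{B}, but with the roles of the stage cost and the switching cost interchanged, since the APGD-S update \eqref{xtk-update-smooth} takes a gradient step in the smooth part $f_t$ and a proximal step in $H_t\^k$. Concretely, I would apply the proximal decrease lemma (Lemma \ref{lemma_decrease} in Appendix \hyperref[proxdecproof]{A}) to each per-block update in \eqref{xtk-update-smooth} with $s(\cdot) = H_t\^k(\cdot)$, $h(\cdot) = f_t(\cdot)$, and step size $\tau_t = \tau$. Here $H_t\^k$ is a sum of one or two squared-norm terms in $x_t$, hence $H_t\^k$ is $2\gamma$-strongly convex for $1 \le t \le N-1$ and $\gamma$-strongly convex for $t = N$ (over all of $\Rn^d$, in particular over $\cX$), while $f_t$ is $l_t$-smooth by Assumption \ref{smooth}. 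The lemma then yields, for each $t$,
\begin{align}
	f_t(x_t\^k) + H_t\^k(x_t\^k) - f_t(x_t\^{k-1}) - H_t\^k(x_t\^{k-1}) \le -\rho_{s,t}\norm{x_t\^k - x_t\^{k-1}}^2, \nn
\end{align}
where $\rho_{s,t} = \gamma + \tfrac{1}{\tau} - \tfrac{l_t}{2}$ for $t < N$ and $\rho_{s,N} = \tfrac{\gamma}{2} + \tfrac{1}{\tau} - \tfrac{l_N}{2}$. Since $l \ge l_t$ for all $t$ and $\gamma > 0$, each $\rho_{s,t}$ is bounded below by $\rho_s = \tfrac{1}{\tau} - \tfrac{l}{2} + \tfrac{\gamma}{2}$, so every per-block inequality also holds with the common constant $\rho_s$.

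Next I would sum these inequalities over $t = 1,\dots,N$. The left-hand side becomes $F(\x\^k) - F(\x\^{k-1}) + \sum_{t=1}^{N}\big(H_t\^k(x_t\^k) - H_t\^k(x_t\^{k-1})\big)$, while the right-hand side becomes $-\rho_s\sum_{t=1}^{N}\norm{x_t\^k - x_t\^{k-1}}^2 = -\rho_s\norm{\x\^k - \x\^{k-1}}^2$. It remains to observe that, because $H_t\^k$ here is precisely the quadratic-switching-cost instance of \eqref{htk}, the telescoping computation \eqref{telescopic}--\eqref{gdiff} from Appendix \hyperref[proof_of_decrease_non_smooth]{B} applies verbatim: writing $g(u,v) = \tfrac{\gamma}{2}\norm{u-v}^2$, expanding $H_t\^k(x_t\^k)$ and $H_t\^k(x_t\^{k-1})$, re-indexing the cross terms, and using $x_0\^k = x_0 = x_0\^{k-1}$ to absorb the leftover boundary term gives $\sum_{t=1}^{N}\big(H_t\^k(x_t\^k) - H_t\^k(x_t\^{k-1})\big) = H(\x\^k) - H(\x\^{k-1})$. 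Since $J = F + H$, this yields $J(\x\^k) - J(\x\^{k-1}) \le -\rho_s\norm{\x\^k - \x\^{k-1}}^2$, as claimed.

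I do not anticipate a serious obstacle; the two points requiring care are (i) correctly identifying the strong-convexity modulus of $H_t\^k$, which is $2\gamma$ for the interior blocks but only $\gamma$ for the terminal block $t = N$, and then passing to the uniform lower bound $\rho_s$; and (ii) recognizing that the block structure of the quadratic $H_t\^k$ is identical to that in \eqref{htk}, so no new telescoping argument is needed. As in the RHAPD analysis, the bound is only useful when $\tau$ is chosen so that $\rho_s > 0$; for instance $\tau = 1/l$ gives $\rho_s = \tfrac{l}{2} + \tfrac{\gamma}{2} > 0$.
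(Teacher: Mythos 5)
Your proposal is correct and follows essentially the same route as the paper's proof in Appendix F: apply Lemma \ref{lemma_decrease} blockwise with $s = H_t\^k$ (strong-convexity modulus $2\gamma$ for $t<N$, $\gamma$ for $t=N$) and $h = f_t$ ($l_t$-smooth), pass to the uniform constant $\rho_s$, then sum and reuse the telescoping steps \eqref{telescopic}--\eqref{gdiff}. No gaps; the two points you flag (the weaker modulus at the terminal block and the reuse of the telescoping identity) are exactly the points the paper's argument relies on.
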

\begin{proof}
    We observe that $s(x) = H_t\^k(x)$ is $2\gamma$-strongly convex over \(\cX\) for \(1 \leq t \leq N - 1\) and \(\gamma\)-strongly convex over \(\cX\) for \(t = N\). Further, $h(x) = f_t(x)$ is $l_t$-smooth over \(\cX\) for all $1\leq t \leq N$. Applying Lemma \ref{lemma_decrease} to the update in \eqref{xtk-update-smooth}, we obtain the following for \(1 \leq t \leq N - 1\)
\begin{align}
	H_t(x_t\^k) + f_t(x_t\^k) &\leq H_t(x_t\^{k-1}) + f_t(x_t\^{k-1}) - \left(\gamma + \frac{1}{\tau} - \frac{l_t}{2}\right) \norm{x_t\^k - x_t\^{k-1}}^{2}, \nonumber
\end{align}
and for \(t = N\), we have
\begin{align}
	H_N(x_N\^k) + f_N(x_N\^k) &\leq   H_N(x_N\^{k-1}) + f_N(x_N\^{k-1}) - \left(\frac{\gamma}{2} + \frac{1}{\tau} - \frac{l_N}{2}\right) \norm{x_N\^k - x_N\^{k-1}}^2.  \nonumber
\end{align}
Summing over $t = 1, \ldots, N$ and following steps \ref{telescopic}-\ref{gdiff} as before, we get the required bound.
\end{proof}

\section*{Appendix G (Subgradient bound for the iterates of APGD-S \eqref{xtk-update-smooth})}\label{proof_of_subgrad_bound_smooth}
\begin{lemma}\label{subgrad_bound_smooth}
  	Under assumption \ref{smooth}, for all \(k \geq 1\), there exists $\v\^k \in \partial \Jt(\x\^k)$, such that
  	\begin{align}\nn
  		\norm{\v\^k} \leq \beta \norm{\x\^k-\x\^{k-1}},
  	\end{align}
  	where $\beta_s^2 := 2 \left(l + \gamma + \frac{1}{\tau}\right)^2$. 
   \end{lemma}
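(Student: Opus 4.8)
The plan is to follow the template of Lemma~\ref{subgrad_bound_non_smooth} from Appendix~C, now using that the switching cost is the quadratic $g(x,y)=\frac{\gamma}{2}\norm{x-y}^2$ (so $\nabla_1 g(x,y)=\gamma(x-y)$ and $\nabla_2 g(x,y)=-\gamma(x-y)$), with the one genuinely new ingredient being that each $f_t$ is $l_t$-smooth rather than merely $G$-Lipschitz. I would split the argument into the two cases $1\le t\le N-1$ and $t=N$, and in each case identify an explicit element of the partial subdifferential from the optimality condition of the update.

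For $1\le t\le N-1$, I would write $[\partial\Jt(\x\^k)]_t=\nabla f_t(x_t\^k)+\gamma(x_t\^k-x_{t-1}\^k)+\gamma(x_t\^k-x_{t+1}\^k)+\partial\ind_{\cX}(x_t\^k)$ and the optimality condition of the update~\eqref{xtk-update-smooth}, noting $\nabla H_t\^k(x_t\^k)=\gamma(x_t\^k-x_{t-1}\^k)+\gamma(x_t\^k-x_{t+1}\^{k-1})$, namely $\mathbf{0}\in\partial\ind_{\cX}(x_t\^k)+\gamma(x_t\^k-x_{t-1}\^k)+\gamma(x_t\^k-x_{t+1}\^{k-1})+\tfrac1\tau(x_t\^k-x_t\^{k-1})+\nabla f_t(x_t\^{k-1})$. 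Choosing the element of $\partial\ind_{\cX}(x_t\^k)$ supplied by this condition and substituting it into $[\partial\Jt(\x\^k)]_t$ produces a subgradient component $v_t\^k=\bigl(\nabla f_t(x_t\^k)-\nabla f_t(x_t\^{k-1})\bigr)+\gamma\bigl(x_{t+1}\^{k-1}-x_{t+1}\^k\bigr)-\tfrac1\tau\bigl(x_t\^k-x_t\^{k-1}\bigr)$, where the $\gamma(x_t\^k-x_{t-1}\^k)$ contribution cancels precisely because the update already used the fresh iterate $x_{t-1}\^k$. Triangle inequality together with $l_t$-smoothness of $f_t$ gives $\norm{v_t\^k}\le(l_t+\tfrac1\tau)\norm{x_t\^k-x_t\^{k-1}}+\gamma\norm{x_{t+1}\^k-x_{t+1}\^{k-1}}$, and $(a+b)^2\le 2a^2+2b^2$ then yields $\norm{v_t\^k}^2\le 2(l+\tfrac1\tau)^2\norm{x_t\^k-x_t\^{k-1}}^2+2\gamma^2\norm{x_{t+1}\^k-x_{t+1}\^{k-1}}^2$. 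For $t=N$, the same manipulation with $\nabla H_N\^k(x_N\^k)=\gamma(x_N\^k-x_{N-1}\^k)$ leaves $v_N\^k=\bigl(\nabla f_N(x_N\^k)-\nabla f_N(x_N\^{k-1})\bigr)-\tfrac1\tau(x_N\^k-x_N\^{k-1})$, hence $\norm{v_N\^k}^2\le(l+\tfrac1\tau)^2\norm{x_N\^k-x_N\^{k-1}}^2$.

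I would then sum $\norm{v_t\^k}^2$ over $t$, reindex the $\gamma^2$-terms so that $\sum_{t=1}^{N-1}\norm{x_{t+1}\^k-x_{t+1}\^{k-1}}^2\le\sum_{t=1}^{N}\norm{x_t\^k-x_t\^{k-1}}^2$, and use $(l+\tfrac1\tau)^2+\gamma^2\le(l+\gamma+\tfrac1\tau)^2$ (valid since $l,\gamma,\tfrac1\tau>0$) to bound every per-block coefficient by $2(l+\gamma+\tfrac1\tau)^2$, which gives $\norm{\v\^k}^2\le 2(l+\gamma+\tfrac1\tau)^2\norm{\x\^k-\x\^{k-1}}^2=\beta_s^2\norm{\x\^k-\x\^{k-1}}^2$, as claimed. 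The computation is largely routine; the only step that really matters — and the reason the alternating structure pays off — is verifying that the $\gamma(x_t\^k-x_{t-1}\^k)$ term appearing in $[\partial\Jt(\x\^k)]_t$ is exactly matched by the corresponding term in the optimality condition of~\eqref{xtk-update-smooth} and therefore drops out, rather than surviving to inflate the constant; handling the boundary block $t=N$ separately and then absorbing all coefficients into the single constant $\beta_s$ is the remaining minor bookkeeping.
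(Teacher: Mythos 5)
Your proposal is correct and follows essentially the same route as the paper's Appendix G proof: the same explicit subgradient $v_t\^k=\nabla f_t(x_t\^k)-\nabla f_t(x_t\^{k-1})+\frac{1}{\tau}(x_t\^{k-1}-x_t\^k)+\gamma(x_{t+1}\^{k-1}-x_{t+1}\^k)$ obtained from the optimality condition of \eqref{xtk-update-smooth} (with the $\gamma(x_t\^k-x_{t-1}\^k)$ term cancelling), the same $l_t$-smoothness plus triangle-inequality bound, the same separate treatment of $t=N$, and equivalent final bookkeeping (the paper absorbs coefficients via $\beta_t^2=2(l_t+\gamma+\frac{1}{\tau})^2$ and $\beta_s=\max_t\beta_t$, whereas you reindex and use $(l+\frac{1}{\tau})^2+\gamma^2\le(l+\gamma+\frac{1}{\tau})^2$, which amounts to the same thing).
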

\begin{proof}
    The proof follows from the smoothness of the stage costs $f_t$. As in Appendix \hyperref[proof_of_subgrad_bound_non_smooth]{C}, we split the proof into two cases, depending on the value of $t$.

\subsubsection*{Case $1 \leq t \leq N-1$} The optimality condition of \eqref{xtk-update-smooth} can be written as
\begin{align}
	\mathbf{0} &\in \gamma \Big(2x_t\^k - x_{t - 1}\^k - x_{t + 1}\^{k-1}\Big) + \frac{x_t\^k - x_t\^{k-1}}{\tau} + \nabla f_t(x_t\^{k-1}) + \partial \ind_{\cX}(x_t\^k).\nn
\end{align}
Recalling the definition of $[\partial \Jt(\x\^k)]_t$ from \eqref{subjt} for the quadratic switching cost, we have that $v_t\^k \in [\partial \Jt(\x\^k)]_t$ where 
\begin{align}
	v_t\^k &:= \nabla f_t(x_t\^k) - \nabla f_t(x_t\^{k-1}) + \frac{x_t\^{k-1} - x_t\^k}{\tau}  + \gamma(x_{t + 1}\^{k-1} - x_{t + 1}\^k)\nn.
\end{align}
Using the triangle inequality and the $l_t$-smoothness of $f_t$, we can bound $\norm{v_t\^k}$ as
\begin{align}
	\norm{v_t\^k} &\leq  \Big\lVert \nabla f_t(x_t\^k) - \nabla f_t(x_t\^{k-1})\Big\rVert \nn + \frac{1}{\tau}\norm{x_t\^{k-1} - x_t\^k} + \gamma\norm{x_{t + 1}\^{k-1} - x_{t + 1}\^k} , \nonumber \\
	&\leqtext{\eqref{smooth}}   \left( l_t + \frac{1}{\tau} \right) \norm{x_t\^k - x_t\^{k-1}} + \gamma \norm{x_{t + 1}\^{k-1} - x_{t + 1}\^k}, \nonumber
\end{align}
which implies that 
\begin{align}
\norm{v_t\^k}^2 &\leq  2\left( l_t + \frac{1}{\tau} \right)^2 \norm{x_t\^k - x_t\^{k-1}}^2 + 2\gamma^2 \norm{x_{t + 1}\^{k-1} - x_{t + 1}\^k}^2. \label{vtbound-s}
\end{align}

\subsubsection*{Case $t=N$} For this case, the optimality condition of \eqref{xtk-update-smooth} is given by 
\begin{align}\nn
	\mathbf{0} \in \gamma(x_N\^k - x_{N - 1}\^k) + \frac{x_N\^k - x_N\^{k-1}}{\tau}+ \nabla f_N(x_N\^{k-1}) + \partial \ind_{\cX}(x_N\^k).
\end{align}
From the definition of $[\partial \Jt(\x\^k)]_N$ in \eqref{subjtn} for the quadratic switching cost, we infer that $v_N\^k \in [\partial \Jt(\x\^k)]_N$, where
\begin{align}
	v_N\^k := \nabla f_N(x_N\^k) - \nabla f_N(x_N\^{k-1}) + \frac{x_N\^{k-1} - x_N\^k}{\tau}. \nn
\end{align}
The norm of $v_N\^k$ can be bounded using the triangle inequality and the $l_N$-smoothness of $f_N$ and takes the form
\begin{align}
	\norm{v_N\^k} \leq \Big\lVert \nabla f_N(x_N\^k) - \nabla f_N(x_N\^{k-1}) \Big\rVert + \frac{1}{\tau}\norm{x_N\^{k-1} - x_N\^k}\leqtext{\eqref{smooth}}  \left(l_N + \frac{1}{\tau} \right) \norm{x_N\^k - x_N\^{k-1}} .\label{vnbound-s}
\end{align}
Combining \eqref{vtbound-s}, \eqref{vnbound-s}, we obtain
\begin{align}
	\norm{\v\^k}^2 &\leqtext{\eqref{vtbound-s},\eqref{vnbound-s}} 2\sum_{t=1}^{N-1}\left(l_t + \frac{1}{\tau}\right)^2\norm{x_t\^k-x_t\^{k-1}}^2 +  2\sum_{t=1}^{N-1} \gamma^2\norm{x_{t+1}\^k - x_{t+1}\^{k-1}}^2 + \left(l_N+\frac{1}{\tau}\right)^2\norm{x_N\^{k-1} - x_N\^k}^2, \nonumber 
 \end{align}
 which implies \(\norm{\v\^k}^2 \leq  \sn \beta_t^2\norm{x_t\^k-x_t\^{k-1}}^2\leq \beta_s^2 \norm{\x\^k-\x\^{k-1}}^2\),
where $\beta_t^2 = 2\left(l_t + \gamma + \frac{1}{\tau}\right)^2$. Defining $\beta_s = \max_t  \beta_t$, we obtain the required result.
\end{proof}
\section*{Appendix H (Smoothness parameter of the sum-squared switching cost)}\label{app:smooth}
We show that the function \(g(x, y) = \frac{\gamma}{2\sqrt{2}d} \ip{x - y}{\mathbf{1}} ^ 2\) is \(\gamma\)-smooth over \(\Rn ^ d \times \Rn ^ d\).
We have \(g(x, y) = \frac{\gamma}{2\sqrt{2}d} (\sum_{i = 1} ^ {d} x_{i} - \sum_{i = 1} ^ d y_{i}) ^ {2}\). Therefore, \begin{align*}
    \nabla_{1} g(x, y) = \frac{\gamma}{\sqrt{2}d} \big(\sum_{i = 1} ^ d (x_{i} - y_{i})\big) \mathbf{1}, \quad \nabla_{2} g(x, y) = -\frac{\gamma}{\sqrt{2}d} \big(\sum_{i = 1} ^ d (x_{i} - y_{i})\big) \mathbf{1}.
\end{align*}
Hence, we obtain \begin{align}\label{eq:bound_this}
	\norm{\begin{bmatrix} \nabla_1 g(u, v) \\ \nabla_2 g(u, v)\end{bmatrix} - \begin{bmatrix} \nabla_1 g(x, y) \\ \nabla_2 g(x, y)\end{bmatrix}} = \frac{\gamma}{\sqrt{2}d} \norm{\sum_{i = 1} ^ {d} (u_{i} - v_{i}) w - \sum_{i = 1} ^ d (x_{i} - y_{i}) w},
\end{align}
where \(\w := [\underbrace{1, \dots, 1}_{d}, \underbrace{-1, \dots, -1}_{d}] ^ {\intercal} \in \Rn ^ {2d}\). Bounding the quantity in \eqref{eq:bound_this}, we obtain \begin{align*}
    \norm{\begin{bmatrix} \nabla_1 g(u, v) \\ \nabla_2 g(u, v)\end{bmatrix} - \begin{bmatrix} \nabla_1 g(x, y) \\ \nabla_2 g(x, y)\end{bmatrix}} &= \frac{\gamma}{\sqrt{2d}} \abs{\sum_{i = 1} ^ {d} (u_{i} - v_{i}) + \sum_{i = 1} ^ d (y_{i} - x_{i})}, \\
    &= \frac{\gamma}{\sqrt{2d}} \abs{\sum_{i = 1} ^ {d} (u_{i} - x_{i}) + \sum_{i = 1} ^ d (y_{i} - v_{i})}, \\
    &\stackrel{(a)}\le \gamma \sqrt{\sum_{i = 1} ^ {d} (u_{i} - x_{i}) ^ 2 + \sum_{i = 1} ^ d (y_{i} - v_{i}) ^ 2}, \\
    &= \gamma \norm{\begin{bmatrix} u \\ v \end{bmatrix} - \begin{bmatrix} x \\ y \end{bmatrix}},
\end{align*}
where \((a)\) follows from the inequality \(\big(\sum_{i = 1} ^ {k} a_{i}\big) ^ 2 \le k \sum_{i = 1} ^ k a_{i} ^ 2\). This completes the proof.

\section*{Appendix I (Smoothness parameter of \(g({\x}) = \sum_{i = 1} ^ {N} \frac{\gamma}{2}\ip{x_{i} - x_{i - 1}}{\mathbf{1}} ^ 2\))}\label{app:overall_smooth}
Here, we show that \(g({\x})\) is \(4\gamma d\)-smooth over \(\Rn ^ {d} \times \dots \times \Rn ^d\). The gradient \(\nabla_{x_{i}} g(\x)\) can be expressed as \begin{align*}
    \nabla_{x_{i}} g(\x) = \begin{cases}
    \gamma\ip{2x_{i} - x_{i - 1} - x_{i + 1}}{\mathbf{1}} \mathbf{1} & 1 \leq i \leq N - 1, \\
    \gamma\ip{x_{i} - x_{i - 1}}{\mathbf{1}}\mathbf{1} & i = N.
    \end{cases}
\end{align*}
Therefore, we can bound \(\norm{\nabla_{\x} g(\x) - \nabla_{\y} g(\y)} ^ 2\) in the following manner:
\begin{align}
    \norm{\nabla_{\x} g(\x) - \nabla_{\y} g(\y)} ^ 2 &= \sum_{i = 1} ^ {N} {\norm{\nabla_{x_{i}} g(\x) - \nabla_{y_{i}} g(\y)} ^ 2}, \nn \\
    &= \gamma ^ 2 d\bigg(\ip{2(x_{1} - y_{1}) - (x_{2} - y_{2})}{\mathbf{1}} ^ 2  \nonumber \\ &\hspace{20mm} + \sum_{i = 2} ^ {N - 1} {\ip{2(x_{i} - y_{i}) - (x_{i - 1} - y_{i - 1}) - (x_{i + 1} - y_{i + 1})}{\mathbf{1}}} ^ 2\nn \nonumber \\
    &\hspace{20mm} + {\ip{(x_{N} - y_{N}) - (x_{N - 1} - y_{N - 1})}{\mathbf{1}}} ^ 2\bigg),\nn \\
    &\stackrel{(a)}\leq \gamma ^ 2 d ^ 2\bigg(\norm{2(x_{1} - y_{1}) + (x_{2} - y_{2})} ^ 2 \nonumber \\ &\hspace{20mm} +  \sum_{i = 2} ^ {N - 1} \norm{2(x_{i} - y_{i}) - (x_{i - 1} - y_{i - 1}) - (x_{i + 1} - y_{i + 1})} ^ {2} \nn\\
    & \hspace{20mm} + \norm{(x_{N} - y_{N}) - (x_{N - 1} - y_{N - 1})} ^ {2} \bigg), \label{eq:continue_from_here}
\end{align}
where \((a)\) follows from the inequality \(\big(\sum_{i = 1} ^ {d} a_{i}\big) ^ 2 \le d \sum_{i = 1} ^ d a_{i} ^ 2\).
Next, we consider the function \(\phi(\x)\) defined as \begin{align*}
    \phi(\x) := \sum_{i = 1} ^ {N} \frac{\gamma}{2} \norm{x_{i} - x_{i - 1}} ^ {2},
\end{align*}
The gradient \(\nabla_{x_{i}} \phi(\x)\) can be expressed as \begin{align}\label{eq:nabla_phi}
    \nabla_{x_{i}} \phi(\x) = \begin{cases}
    \gamma (2x_{i} - x_{i - 1} - x_{i + 1}) & 1 \leq i \leq N - 1, \\
    \gamma (x_{i} - x_{i - 1}) & i = N.
    \end{cases}
\end{align}
It follows from \cite[Lemma 1]{li2020online} that \(\phi(\x)\) is \(4\gamma\)-smooth over \(\Rn ^ d \times \dots \times \Rn ^ d\). 
Therefore, \begin{align} \label{def:smoothness}
    \norm{\nabla_{\x} \phi(\x) - \nabla_{\y}\phi(\y)} \le 4\gamma \norm{\x - \y},
\end{align}
for all \(\x, \y \in \Rn ^ d\). This implies the following:
\begin{align*}
    16\norm{\x - \y} ^ {2} &\stackrel{(a)}\ge \bigg(\norm{2(x_{1} - y_{1}) + (x_{2} - y_{2})} ^ 2 +  \sum_{i = 2} ^ {N - 1} \norm{2(x_{i} - y_{i}) - (x_{i - 1} - y_{i - 1}) - (x_{i + 1} - y_{i + 1})} ^ {2} \\
    & \hspace{20mm} + \norm{(x_{N} - y_{N}) - (x_{N - 1} - y_{N - 1})} ^ {2} \bigg),
\end{align*}
where to get \((a)\), we square both sides of \eqref{def:smoothness} and then use \eqref{eq:nabla_phi}. Therefore, bounding \eqref{eq:continue_from_here} using the bound obtained above, we get \begin{align*}
    \norm{\nabla_{\x} g(\x) - \nabla_{\y} g(\y)} ^ 2 \le 16\gamma ^ 2 d ^ 2\norm{\x - \y} ^ 2,
\end{align*} 
which implies that \(g(\x)\) is \(4\gamma d\)-smooth. In \textbf{E2}, we have \(H(\x) = \frac{1}{\sqrt{2}d} g(\x)\) which implies that \(H(\x)\) is \(2\sqrt{2}\gamma\)-smooth.
\end{document}